\newcommand{\RR}{\mathbb{R}}
\newcommand{\bdry}{\mathrm{bdry\ }}
\newcommand{\epi}{\text{epi }}
\newcommand{\hypo}{\text{hypo }}
\newcommand{\argmin}{\operatornamewithlimits{argmin}}
\newtheorem{theorem}{Theorem}
\newtheorem{proposition}[theorem]{Proposition}%
\newtheorem{lemma}[theorem]{Lemma}%
\newtheorem{corollary}[theorem]{Corollary}%
\newtheorem{claim}[theorem]{Claim}%
\definecolor{blue}{gray}{0.0}
\begin{document}

\title[Gauges and Accelerated Optimization]{Gauges and Accelerated Optimization over\\ Smooth and/or Strongly Convex Sets}


\author[1]{\fnm{Ning} \sur{Liu}}\email{nliu15@jhu.edu}

\author[1]{\fnm{Benjamin} \sur{Grimmer}}\email{grimmer@jhu.edu}

\affil[1]{\orgdiv{Department of Applied Mathematics and Statistics}, \orgname{Johns Hopkins University}, \orgaddress{\city{Baltimore}, \postcode{21218}, \state{MD}, \country{USA}}}


\abstract{We consider feasibility and constrained optimization problems defined over smooth and/or strongly convex sets. These notions mirror their popular function counterparts but are much less explored in the first-order optimization literature. We propose new scalable, projection-free, accelerated first-order methods in these settings. Our methods avoid linear optimization or projection oracles, only using cheap one-dimensional linesearches and normal vector computations. Despite this, we derive optimal accelerated convergence guarantees of $O(1/T)$ for strongly convex problems, $O(1/T^2)$ for smooth problems, and accelerated linear convergence given both. Our algorithms and analysis are based on novel characterizations of the Minkowski gauge of smooth and/or strongly convex sets, which may be of independent interest: although the gauge is neither smooth nor strongly convex, we show the gauge squared inherits any structure present in the set.}

\keywords{Projection-Free, First-Order Optimization, Smoothness, Strong Convexity, Minkowski Gauges, Accelerated Convergence Rates}



\maketitle

    \section{Introduction}

We consider feasibility and optimization problems defined over sets $S_i$ possessing classic structures like smoothness and strong convexity. These structures in sets are much less explored than their function counterparts in the first-order optimization literature. We show these structures in constraint sets lead to the same speedups commonly found in first-order methods for structured functions (i.e., $O(1/T)$ convergence rates given strong convexity, accelerated $O(1/T^2)$ rates given smoothness, and fast linear convergence given both). We propose projection-free algorithms attaining these optimal rates for both feasibility problems
\begin{equation} \label{eq:Feas-Problem}
    \mathrm{Find\ } x\in\cap_{i=1}^m S_i
\end{equation}
where all the sets $S_i$ are either smooth, strongly convex, or both, and for optimization problems with similarly structured $f$ and $S_i$
\begin{equation} \label{eq:OPT-Problem}
    \begin{cases}
        \max& f(x)\\
        \mathrm{s.t.} & x\in S_i \quad \forall i=1\dots m \ .
    \end{cases}
\end{equation}

Critically, our proposed algorithms are projection-free. The typical first-order method assumption of being able to project onto $S_i$ limits algorithms to simple constraints. Instead, we only assume oracles for one-dimensional linesearches (to evaluate the gauges defined in~\eqref{eq:gauge}) and to compute normal vectors on the boundary of each $S_i$. In contrast, projected gradient methods require an orthogonal projection oracle. (Computing projections onto sets is comparable to computing proximal operators of functions, which is often costly, whereas computing normal vectors is comparable to cheaper gradient calculations.) For example, if $S_i$ is an ellipsoid, orthogonal projection lacks a closed form, but our linesearch and its normal vectors have closed forms, costing a single matrix-vector multiplication to compute. For polyhedrons, projection is a quadratic program, whereas a normal vector is computed by identifying any one active constraint.

Throughout, we consider closed convex sets $S_i\subseteq \mathcal{E}$ in some finite-dimensional Euclidean space $\mathcal{E}$. Smoothness and strong convexity can be intuitively defined as follows (formal local and global definitions are given in Section~\ref{sec:prelim}): A set $S$ is $\beta$-smooth if every unit normal vector $\zeta\in N_S(x)$ taken at some $x$ on the boundary of $S$ yields an inner approximation
\begin{equation} \label{eq:inner_approx}
B(x - \zeta/\beta, 1/\beta) \subseteq S \ , 
\end{equation}
where $B(x,r)$ is the ball of radius $r$ around $x$. This can be viewed as a ball smoothly rolling around inside the set's boundary.
Likewise, $S$ is $\alpha$-strongly convex if every unit normal vector $\zeta\in N_S(x)$ yields an outer approximation
\begin{equation} \label{eq:outer_approx}
B(x - \zeta/\alpha, 1/\alpha) \supseteq S \ . 
\end{equation}
These definitions in terms of inner and outer approximations given by each normal vector mirror the traditional functional setting, where smoothness and strong convexity correspond to upper and lower quadratic approximations being given by each subgradient.

\noindent {\bf Our Contributions.}
Our approach towards utilizing these structures focuses on each set's gauge, translated by some point $e_i$ in the interior of $S_i$ (taking $e_i=0$ gives the classic Minkowski functional):
\begin{equation} \label{eq:gauge}
    \gamma_{S_i,e_i}(x) := \inf \{\lambda>0 \mid x-e_i \in \lambda (S_i-e_i)\}\ .
\end{equation}
\begin{enumerate}
    \item {\bf Structure of Gauges of Structured Sets.} We show in Theorems~\ref{thm:gauge-strong-convexity} and~\ref{thm:gauge-smoothness} that any $\beta$-smooth or $\alpha$-strongly convex set has $O(\beta)$-smooth or $O(\alpha)$-strongly convex gauge squared $\frac{1}{2}\gamma^2_{S,e}(x)$, respectively. Theorems~\ref{thm:gauge-strong-convexity-converse} and~\ref{thm:gauge-smoothness-converse} show the converse of this result. Here the big-O notation suppresses constants depending on the size of $S$ and placement of $e\in S$.
    \item {\bf Fast Algorithms for Reformulations of~\eqref{eq:Feas-Problem} and~\eqref{eq:OPT-Problem}.} Noting $\gamma_{S,e}(x)\leq 1$ if and only if $x\in S$, the feasibility problem~\eqref{eq:Feas-Problem} can be rewritten as the unconstrained convex minimization
    \begin{equation}\label{eq:Feas-Gauge-Problem}
        \min_y \max_i\{\gamma_{S_i,e_i}(y)\}
    \end{equation}
    for any $e_i\in S_i$. Utilizing the radial duality framework of~\cite{grimmer2023radial1, grimmer2023radial2}, the constraint optimization problem~\eqref{eq:OPT-Problem} can be rewritten as the unconstrained convex minimization
    \begin{equation}\label{eq:OPT-Gauge-Problem}
        \min_y \max_i\{f^\Gamma(y),\  \gamma_{S_i,0}(y)\}
    \end{equation}
    provided $0\in \mathrm{int\ } \bigcap_i S_i$ and $f(0)>0$, where $f^\Gamma(y)=\sup \{v>0 \mid vf(y/v) \leq 1 \} $. Applying well-known first-order methods to these unconstrained gauge reformulations, our Theorems~\ref{thm:feas-rates} and~\ref{thm:OPT-rates} show a $O(1/\alpha\epsilon)$ rate for $\alpha$-strongly convex $f$ and $S_i$, an accelerated $O(\sqrt{\beta/\epsilon})$ rate for $\beta$-smooth $f$ and $S_i$ and a $O(\sqrt{\beta/\alpha}\log(1/\epsilon))$ rate given both. These rates match the optimal projected method convergence rates, avoiding orthogonal projections but requiring structure on each $S_i$.
    \item {\bf Numerical Validation.} We verify our theory numerically, seeing the expected speedups for feasibility problems over $p$-norm ellipsoids, which are known to be smooth when $2\leq p < \infty$ and strongly convex when $1<p\leq 2$. Further, for synthetic smooth ellipsoid-constrained optimization instances, we compare our projection-free accelerated method (based on the radial reformulation~\eqref{eq:OPT-Gauge-Problem}) against alternative simple first-order methods, standard first-order-based solvers (SCS and COSMO) and second-order, interior point solvers (Gurobi and Mosek). Despite only using one matrix multiplication per iteration, our method is often competitive with second-order methods. This motivates future more practical implementations and real-world validation for accelerated radial methods.
\end{enumerate}

\noindent {\bf Outline.} In the remainder of this section, we discuss related approaches in the literature. Section~\ref{sec:prelim} formally introduces smoothness and strong convexity of functions and sets and provides several examples. Section~\ref{sec:main_result} formalizes and proves our main theorems on gauges of structured sets. Finally, in Sections~\ref{sec:algs} and~\ref{sec:appli}, we propose and analyze projection-free methods using our Theorems~\ref{thm:gauge-strong-convexity} and~\ref{thm:gauge-smoothness} for feasibility problems based on a gauge reformulation and constrained optimization based on a radial reformulation, and numerically explore their effectiveness.

\subsection{Related Works}\label{subsec:relate_work}

\noindent {\bf Structural Results on Gauges.} 
There has been much interest in the influence of curvature on computational and statistical efficiency in optimization and machine learning~\cite{garber2015faster, abernethy2018faster, demianov1970approximate, levitin1966constrained, dunn1979rates}, often in relation to Frank-Wolfe-type methods. These works primarily considered the relationship between the strong convexity of a centrally symmetric set and its gauge (squared). Note that this symmetry condition ensures a set's gauge is a norm. Recently \cite{molinaro2020curvature} showed a set is strongly convex w.r.t.~its own gauge norm if and only if the set's gauge squared is a strongly convex function. Our results remove any symmetry requirements or measurements with respect to the set itself. Without assuming symmetry, there is no gauge norm to consider, so our results describe smoothness and strong convexity with respect to the Euclidean norm. To the best of our knowledge, our results on the smoothness of a gauge squared are entirely new.

\noindent {\bf Optimization over Gauges - Gauge and Radial Duality.}
A specialized duality theory between optimization problems with objective function and/or constraints defined by gauges was developed by Freund~\cite{Freund1987} and further advanced by~\cite{Friedlander2014,Aravkin2018}. Our results may offer new insights, showing that these gauge problem formulations (squared) may be more tractable to solve when the sets inducing these gauges are smooth and/or strongly convex.

In \cite{grimmer2023radial1} and \cite{grimmer2023radial2}, Grimmer established a radial duality theory between nonnegative optimization problems. This theory shows that constrained optimization can be reformulated into an unconstrained problem in terms of the gauge of its constraints. Namely, for any concave $f$ with $f(0)>0$ and convex $S$ with $0\in \mathrm{int\ }S$, the primal problem
\begin{equation}
    \begin{cases} \max_x f(x)\\ \text{s.t. } x\in S \end{cases} = \max_{x\in\mathcal{E}} \min\{f(x), \hat \iota_S(x)\}
\end{equation}
where $\hat \iota_S(x)$ is the nonstandard indicator function $\hat \iota_S(x)=\begin{cases}+\infty & \text{ if } x\in S\\ 0 & \text{ if } x \notin S \end{cases}$ can be equivalently solved by minimizing the radially dual problem
\begin{equation}
    \min_{y\in\mathcal{E}} \max\{f^\Gamma(y),\gamma_S(y)\}
\end{equation}
where $f^\Gamma(y)=\sup\{ v>0 \mid v f(y/v) \leq 1\}$~\cite[Proposition 24]{grimmer2023radial1}. Note one can have $f(0)>0$ and hence a positive maximum value without loss of generality by translating the problem to place a known feasible point $x_0\in \mathrm{int \ } \left(S \cap \{x \mid f(x) \in \mathbb{R}\}\right)$ to the origin and adding a constant. This reformulation is unconstrained and only depends on the constraints $S$ via their gauge. Consequently, our structural results on gauges will facilitate the direct use of accelerated optimization methods on this radial dual (squared). Prior radial methods have only used simple subgradient methods~\cite{Renegar2016,Grimmer2017} or smoothing techniques~\cite{Renegar2019,grimmer2023radial2}.

\noindent {\bf Optimization over Gauges - Penalization and Level-set Reformulations.}
Given $e \in S$, one can rewrite the set constraint $x\in S$ as the functional constraint $\gamma_{S,e}(x)\leq 1$. Then a constrained optimization problem~\eqref{eq:OPT-Problem} can be reformulated as
\begin{equation}
    \begin{cases} \min_x f(x)\\ \text{s.t. } \gamma_{S,e}(x)\leq 1 \ . \end{cases} 
\end{equation}
The recent works~\cite{Zakaria2022,Lu2023} approached this via methods of Lagrange multipliers, minimizing $f(x) + \lambda ( \gamma_{S,e}(x) -1)$ in online optimization settings. Their convergence guarantees carefully account for the cost of approximate gauge evaluations via membership oracle calls to $S$. In contrast, we assume exact evaluations of the gauge (as is possible for many polyhedral, spectral, and polynomial-type constraints), which facilitates access to actual boundary points of $S$ where normal vectors exist.

Alternatively, a function $f$ can be minimized over a set $S$ via the value function
\begin{equation}
    v(\lambda) = \min_x \max \{f(x)-\lambda, \gamma_{S,e}(x)-1\} \ .
\end{equation}
This has $v(\lambda)=0$ exactly when $\lambda$ equals the minimum objective value. Root finding schemes can be effectively applied here~\cite[Lemmas 2.3.4, 2.3.5, 2.3.6]{nesterov2018lectures} as well as more careful approaches, always ensuring a feasible solution path~\cite{lin2018}. An optimal, parameter-free level-set method was proposed based on a simple parallel restarting scheme by~\cite{lin2020parameter}.

Another root-finding reformulation, proposed by~\cite{aravkin2019level}, reverses the roles of constraint and objective functions. Then, a convex function $f$ can be minimized over a set $S$ via root finding on the value function
\begin{equation}
    v(\lambda) = \begin{cases} \min_x \gamma_{S,e}(x)\\ \text{s.t. } f(x)\leq \lambda \ . \end{cases} 
\end{equation}
Namely, the minimum objective value of $f$ on $S$ is the smallest $\lambda$ such that $v(\lambda)\leq 1$. For any of these penalized/level-set/root-finding optimization techniques, our results on the structure of gauges (squared) for smooth and/or strongly convex sets $S$ may yield faster algorithms. The details for such an application are beyond this work but provide an interesting future direction. A general limitation of any gauge-based algorithm or reformulation is its reliance on a strictly feasible point $e$, which may require nontrivial preprocessing. 

\noindent {\bf Functionally Constrained Optimization.}
Functionally constrained optimization is one natural family of constrained convex optimization problems with smooth and/or strongly convex constraint sets. These are problems minimizing $f(x)$ subject to $g_i(x)\leq 0$ for convex $f,g_i$. Lemma~\ref{lemma:level} shows smoothness and/or strong convexity of $g_i$ carry over to the level sets $S_i = \{x \mid g_i(x)\leq 0\}$ (assuming gradients are well behaved on the boundary of the feasible region, which constraint qualification can ensure).
Functionally constrained problems can be addressed using switching subgradient, augmented Lagrangian, and other first-order methods~\cite{Metel2021}. One drawback of such methods directly defined in terms of function and gradient evaluations of $g_i$ is that they are representation dependent (that is, replacing $g_i(x)\leq 0$ by $2g_i(x)\leq 0$ may change how the algorithm behaves). Hence, one may need to preprocess constraints, rescaling them appropriately, to achieve good performance. Approaches based on evaluating the gauges avoid this issue as $\{x \mid g(x)\leq 0\}$ is the same set as $\{x \mid \lambda g(x)\leq 0\}$ for any rescaling $\lambda>0$. Note that the radial transform $f^{\Gamma}$ is not invariant to rescaling, instead, being given by $(\lambda f)^{\Gamma} (y)=f^{\Gamma}(\lambda y)/\lambda$. However, its Lipschitz constant is invariant to rescalings: By~\cite[Proposition 1]{grimmer2023radial2} $f^\Gamma$ is $1/R$-Lipschitz where $R= \inf\{\|x\| \mid f(x)\leq 0\}$. Since this constant $R$ is invariant to rescalings, $(\lambda f)^{\Gamma}$ is also $1/R$-Lipschitz.

\noindent {\bf Projected and Stochastic Gradient-Type Methods.}
When constraints are sufficiently simple, projected gradient methods can be utilized. Given a smooth objective $f$, accelerated projected gradient methods can achieve a convergence rate of $O(1/\sqrt{\epsilon})$. To match this rate, our proposed radial methods additionally need $S$ to be smooth and bounded. Consequently, when projections are computationally inexpensive, projected gradient methods may be more attractive than radial methods. To the best of our knowledge, no performance improvements have been shown for projected gradient methods over smooth/strongly convex sets.

In settings where only stochastic access to the objective function and its gradients are available, projected methods have historically found success. Their analysis typically generalizes to handle stochasticity. In contrast, radial methods rely on evaluating a one-dimensional linesearch to compute $f^\Gamma$. Given only stochastic access to $f$, it is unclear how to produce an unbiased estimate of this root in general, let alone unbiased estimates of $f^\Gamma$'s gradients.

\noindent {\bf Conditional Gradient-Type Methods.}
Here, we primarily consider sets $S$ where first-order access to the gauge $\gamma_S$ is given. A complementary projection-free model is to consider sets $S$ where access to the support function $\sigma_S(w) = \sup\{w^Tx \mid x\in S\}$ is given. That is, assuming linear optimization of $S$ is tractable. In this case, conditional gradient methods (e.g., Frank-Wolfe~\cite{frank1956algorithm}) can be applied. 
The recent works~\cite{pena2023affine,kerdreux2021projection} have shown performance improvements for conditional gradient methods whenever the constraint set is strongly convex, or more generally, uniformly convex. 

Support functions are dual to gauges in that for any closed convex $S$ with $0\in S$, $\sigma_{S^\circ}^2 = \gamma_S^2$ where $S^\circ$ is the polar of $S$. Despite this duality, often one of these models is more amenable to computation.
For example, consider polyhedral constraints $S$ either represented by a collection of inequalities of $\{x \mid a_i^Tx\leq b_i\}$ or represented as the convex hull of its extreme points $co\{x_i\}$. In either case, projection onto this region is a quadratic program, which may require an interior point method call. Linear optimization over this region may also require such a call when represented by its inequalities but can be done in linear time when represented by its extreme points. Hence for polyhedrons with relatively few extreme points, Frank-Wolfe can be effective.
In contrast, a gauge-based approach is ideal for polyhedrons represented by relatively few inequalities. The gauge can be computed in linear time via the closed form of $\max\{a_i^Ty/b_i\}$ with normal vectors given by selecting any active constraint.

    \section{Preliminaries on Smoothness and Strong Convexity}\label{sec:prelim}
We begin by defining smoothness and strong convexity of functions, ubiquitous notions in the analysis of first-order methods. Then, we define their mirrored notions that apply to sets, which have received far less attention in the literature.

Let $S\subseteq \mathcal{E}$ be a nonempty closed convex set, $h: \mathcal{E} \rightarrow \mathbb{R} \cup \{+\infty\}$ be a closed convex function, $B(x,r)\subseteq \mathcal{E}$ be a closed ball with center $x$ and radius $r$. We denote the extended real value by $\mathbb{\overline R} = \mathbb{R} \cup \{+\infty, -\infty\}$, the boundary of $S$ by $\bdry S$, the interior of $S$ by $\mathrm{int\ }S$, and the effective domain of $h$ by $\mathrm{dom\ }h$. We denote the subdifferential of $h$ at $x\in \mathrm{dom\ }f$ by $\partial h(x) :=\{g \in \mathcal{E} \mid h(y)-h(x) \geq g^T(y-x), \forall y \in \mathcal{E} \}$ and refer to each element as a subgradient. We denote the normal cone of $S$ at $x\in S$ by $N_S(x):= \{\zeta \mid \zeta^T(y-x) \leq 0, \forall y \in S\}$ and refer to each element as a normal vector. Throughout, any norm $\|\cdot\|$ without a subscript always denotes the Euclidean norm.
For functions $f_1, f_2: \mathcal{E} \rightarrow \mathbb{\overline R}$, the {\it epi-addition} (or {\it inf-convolution}) is
$$ (f_1 \ \square\ f_2)(x)=\inf_{x_1+x_2=x} \{f_1(x_1)+f_2(x_2)\} \ .
$$

\noindent {\bf Smooth and Strongly Convex Functions.}
We say a convex differentiable function $h$ is {\it (globally) $L$-smooth} if every point $x\in\mathrm{dom\ }h$ gives a quadratic upper bound
$$ h(y) \leq h(x) + \nabla h(x)^T(y-x) + \frac{L}{2}\|y-x\|^2 \quad \forall y\in\mathcal{E} \ . $$

Smoothness can be equivalently understood by any of the following characterizations. For completeness, proof/references are given in Appendix~\ref{proof:smooth_func} as property (d) below is not standard.
\begin{proposition} \label{prop:smooth_func}
    For a closed convex differentiable function $h$ and a constant $L\in (0,\infty)$, the following are equivalent:
    \begin{enumerate}[label=(\alph*)]
    \item The function $h$ is globally $L$-smooth.
    \item For any $x_i \in \mathcal{E}$, $ (\nabla h(x_1)-\nabla h(x_2))^T(x_1-x_2)\geq \frac{1}{L}\|\nabla h(x_1)-\nabla h(x_2)\|^2 $.
    \item There exists a closed convex function $h_0$ such that $(h_0\  \square\ \frac{L}{2}\|\cdot\|^2 )= h$.
    \item For any $x\in\mathcal{E}$ and $\tilde L > L$, all $y\in B(x,\eta)$ for $\eta>0$ small enough have
    $$ h(y) \leq h(x) + \nabla h(x)^T(y-x) + \frac{\tilde L}{2}\|y-x\|^2 \ . $$
    \end{enumerate}
\end{proposition}
\noindent We refer to this last characterization as $h$ being {\it (locally) $L$-smooth w.r.t.~$(x,\nabla h(x))$}.

Similarly, we say a closed convex $h$ is {\it (globally) $\mu$-strongly convex} if every point $x\in\mathrm{dom\ }h$ and subgradient $g\in\partial h(x)$ gives a quadratic lower bound
$$ h(y) \geq h(x) + g^T(y-x) + \frac{\mu}{2}\|y-x\|^2 \quad \forall y\in\mathcal{E} \ . $$
Mirroring those of smoothness, standard equivalent characterizations are given below. Again, for completeness, proof/references are given in Appendix~\ref{proof:sc_func}.
\begin{proposition} \label{prop:sc_func}
    For a closed convex function $h$ and a constant $\mu\in (0,\infty)$, the following are equivalent:
    \begin{enumerate}[label=(\alph*)]
    \item The function $h$ is globally $\mu$-strongly convex.
    \item For any $x_i \in \mathcal{E}$ and $g_i \in \partial h(x_i)$, $(g_1 - g_2)^T(x_1-x_2) \geq \mu\|x_1-x_2\|^2$.
    \item There exists a closed convex function $h_0$ such that $(h_0\  \square\ h) = \frac{\mu}{2}\|\cdot\|^2$.
    \item For any $x\in\mathcal{E}$, $g \in \partial h(x)$ and $\tilde \mu < \mu$, all $y\in B(x,\eta)$ for $\eta>0$ small enough have
    $$ h(y) \geq h(x) + g^T(y-x) + \frac{\tilde \mu}{2}\|y-x\|^2 \ . $$
    \end{enumerate}
\end{proposition}
\noindent We refer to this last condition as $h$ being {\it (locally) $\mu$-strongly convex w.r.t.~$(x,g)$}.
For twice continuously differentiable $h$, local smoothness and strong convexity at some $x$ are equivalent to $ \mu L \preceq \nabla^2h(x) \preceq L I $.

\noindent {\bf Smooth and Strongly Convex Sets.}
We say a nonempty closed convex set $S\subset \mathcal{E}$ is {\it (globally) $\beta$-smooth}, if every point $\bar y\in\bdry S$ and unit length normal vector $\zeta\in N_S(\bar y)$ provide a ball inner approximation
\begin{equation} \label{eq:inner-ball}
    B\left(\bar y - \frac{1}{\beta}\zeta, \frac{1}{\beta}\right) \subseteq S \ .
\end{equation}
We abuse notation, saying any potentially nonsmooth set is $\infty$-smooth, viewing the limit above as reducing to $\bar y \in S$. In the other limit, the only $0$-smooth sets are half-spaces.
Mirroring Proposition~\ref{prop:smooth_func}, several equivalent characterizations of smooth sets are given below, proof deferred to Appendix~\ref{pf:smooth_set}\footnote{Characterizations of more general nonconvex notions of smooth sets are given in~\cite[Definition 3.2, Page 286]{goncharov2017strong}. However, we are unable to find any statement and proof of such equivalent characterizations for simply convex sets based on convex normal vectors. Hence our proofs of these characterizations may be of some independent interest.}.
\begin{proposition} \label{prop:smooth_set}
    For a closed convex set $S$ and a constant $\beta\in (0,\infty)$, the following are equivalent:
    \begin{enumerate}[label=(\alph*)]
    \item The set $S$ is globally $\beta$-smooth.
    \item For any $x_i \in S$ with unit length $\zeta_i \in N_S(x_i)$, $ (\zeta_1-\zeta_2)^T(x_1-x_2)\geq \frac{1}{\beta}\|\zeta_1-\zeta_2\|^2 $.
    \item There exists a closed convex set $S_0$ such that  $S_0+B(0, \frac{1}{\beta})=S$.
    \end{enumerate}
    These conditions imply the following local smoothness condition
    \begin{enumerate}
    \item[(d)] For any $\bar y\in S$, unit length $\zeta\in N_S(\bar y)$ and $\tilde \beta > \beta$, there exists $\eta>0$ small enough that $$ B\left(\bar y - \frac{1}{\Tilde{\beta}}\zeta, \frac{1}{\Tilde{\beta}}\right) \cap B(\bar y, \eta) \subseteq S \cap B(\bar y, \eta) \ . $$ 
    \end{enumerate}
\end{proposition}

\noindent Note (b) implies Lipschitzness of unit normal vectors $\|\zeta_1-\zeta_2\| \leq \beta\|x_1-x_2\|$. The above condition (d), which we refer to as being (locally) $\beta$-smooth w.r.t.~$(\bar y, \zeta)$, does not imply the other conditions: For any $\epsilon>0$, $S=\{(x,y) \mid |x| \leq \epsilon \}$ is globally only $1/\epsilon$-smooth but locally $0$-smooth. We expect this implication to hold among compact sets but are unaware of a proof of this result.

We say a set $S$ is {\it (globally) $\alpha$-strongly convex}, if every point $\bar y\in\bdry S$ and unit length normal vector $\zeta\in N_S(\bar y)$ give a ball outer approximation
\begin{equation}\label{eq:outer-ball}
    B\left(\bar y - \frac{1}{\alpha}\zeta, \frac{1}{\alpha}\right) \supseteq S \ .
\end{equation}
In particular, this reduces to saying $S$ is $0$-strongly convex if $S$ is convex. In the other limit, singletons are the only $\infty$-strongly convex sets. Mirroring Proposition~\ref{prop:sc_func}, below are equivalent characterizations, proven by the equivalences of $\mathbf{(e),(h),(f),(l)}$ in~\cite[Theorem 2.1, page 265-266]{goncharov2017strong}.

\begin{proposition}\label{prop:sc_set}
For a closed convex bounded set $S$ and a constant $\alpha\in (0,\infty)$, the following are equivalent:
\begin{enumerate}[label=(\alph*)]
    \item A set $S$ is globally $\alpha$-strongly convex.
    \item For any $x_i \in S$ with unit length $\zeta_i \in N_S(x_i)$,  $ (\zeta_1-\zeta_2)^T(x_1-x_2)\geq \alpha \|x_1-x_2\|^2 $.
    \item There exists a closed convex set $S_0$ such that  $S_0+S = B(0, \frac{1}{\alpha})$. 
    \item For any $\bar y\in S$, unit length $\zeta\in N_S(\bar y)$ and $\tilde \alpha < \alpha$, there exists $\eta>0$ small enough that $$ B\left(\bar y - \frac{1}{\Tilde{\alpha}}\zeta, \frac{1}{\Tilde{\alpha}}\right) \cap B(\bar y, \eta) \supseteq S \cap B(\bar y, \eta) \ . $$ 
\end{enumerate}
\end{proposition}

\noindent We refer to this last condition as being {\it (locally) $\mu$-strongly convex w.r.t.~$(\bar y, \zeta)$}.


Lastly, we note a helpful lemma as the containments $B(e,r) \subseteq S \subseteq B(e,R)$ imply $\gamma_{B(e,R),e} \leq \gamma_{S,e} \leq \gamma_{B(e,r),e}$. From this, it follows that $\gamma_{S,e}$ is uniformly $1/R$-Lipschitz.
\begin{lemma} \label{lem:gauge_bound}
    For any closed convex set $S$ with $e \in \mathrm{int\ } S$, the gauge $\gamma_{S,e}(z)$ has
    $$\|z-e\|/ \sup \{\|x-e\| \mid x \in S\} \leq \gamma_{S,e}(z) \leq \|z-e\|/ \inf \{\|x-e\| \mid x \notin S\} \ .$$ 
\end{lemma}

\subsection{Common Families of Smooth/Strongly Convex Sets}\label{subsec:pre_def}
Many commonly encountered families of constraint sets possess smoothness and/or strong convexity. Below, we give three explicit examples (namely, p-norms, functional constraints, and epigraphs) and further three more common operations that preserve these structures (namely, affine transformations, Minkowski sums, and intersections). See~\cite[page 259-297]{goncharov2017strong}, \cite{polovinkin1996strongly}, and \cite{vial1982strong} as classic references for more on smooth and strongly convex sets. Note for general $S$, verifying its smoothness or strong convexity and computing the associated constants can be highly nontrivial.

\begin{figure}[t]
\begin{minipage}[t]{0.32\linewidth}
    \centering
	\includegraphics[width=0.8\linewidth]{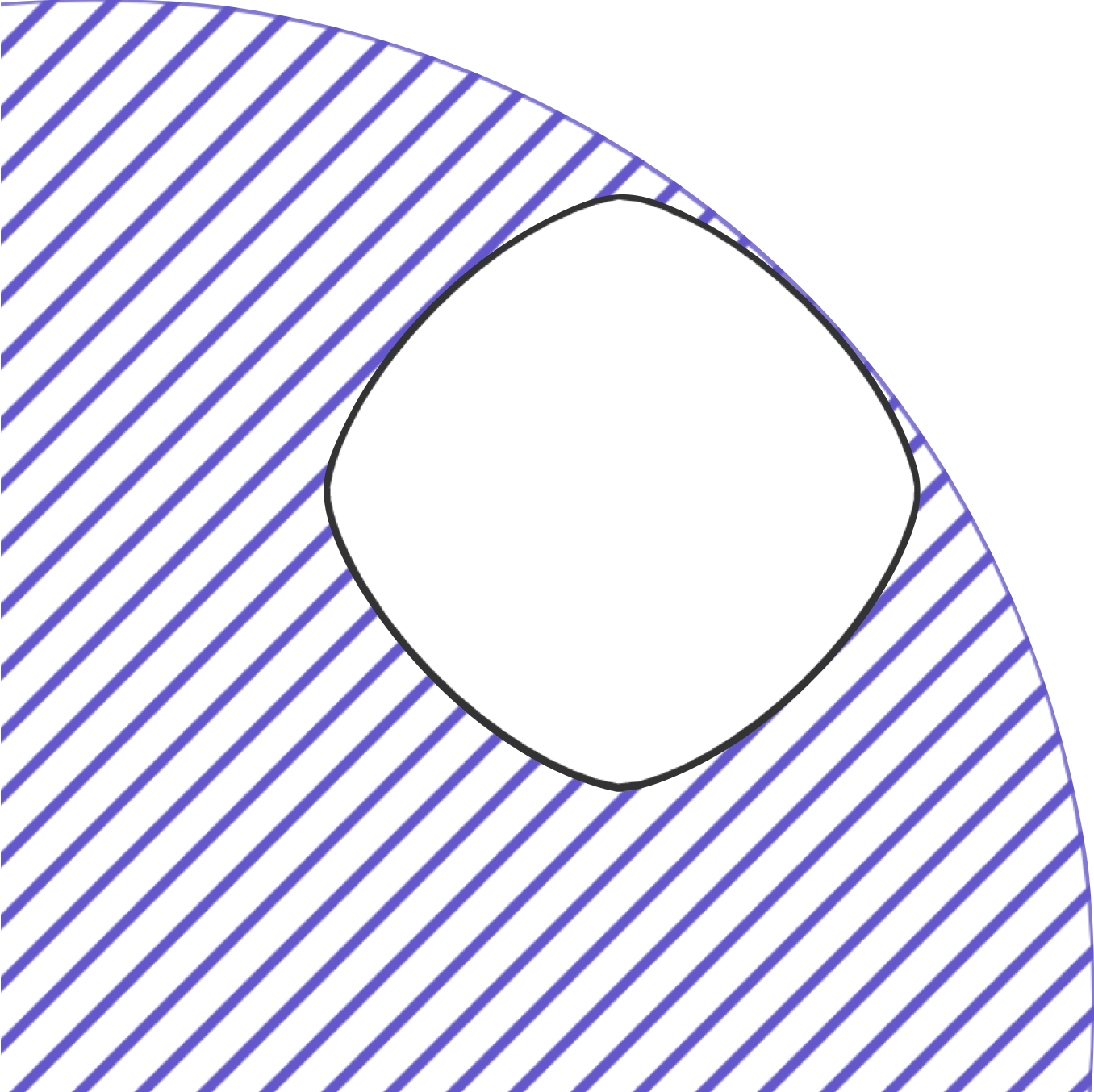}
	\caption*{(a) $p=1.5$-norm ball}
	\label{fig:sc}
\end{minipage}
\begin{minipage}[t]{0.32\linewidth}
	\centering
	\includegraphics[width=0.8\linewidth]{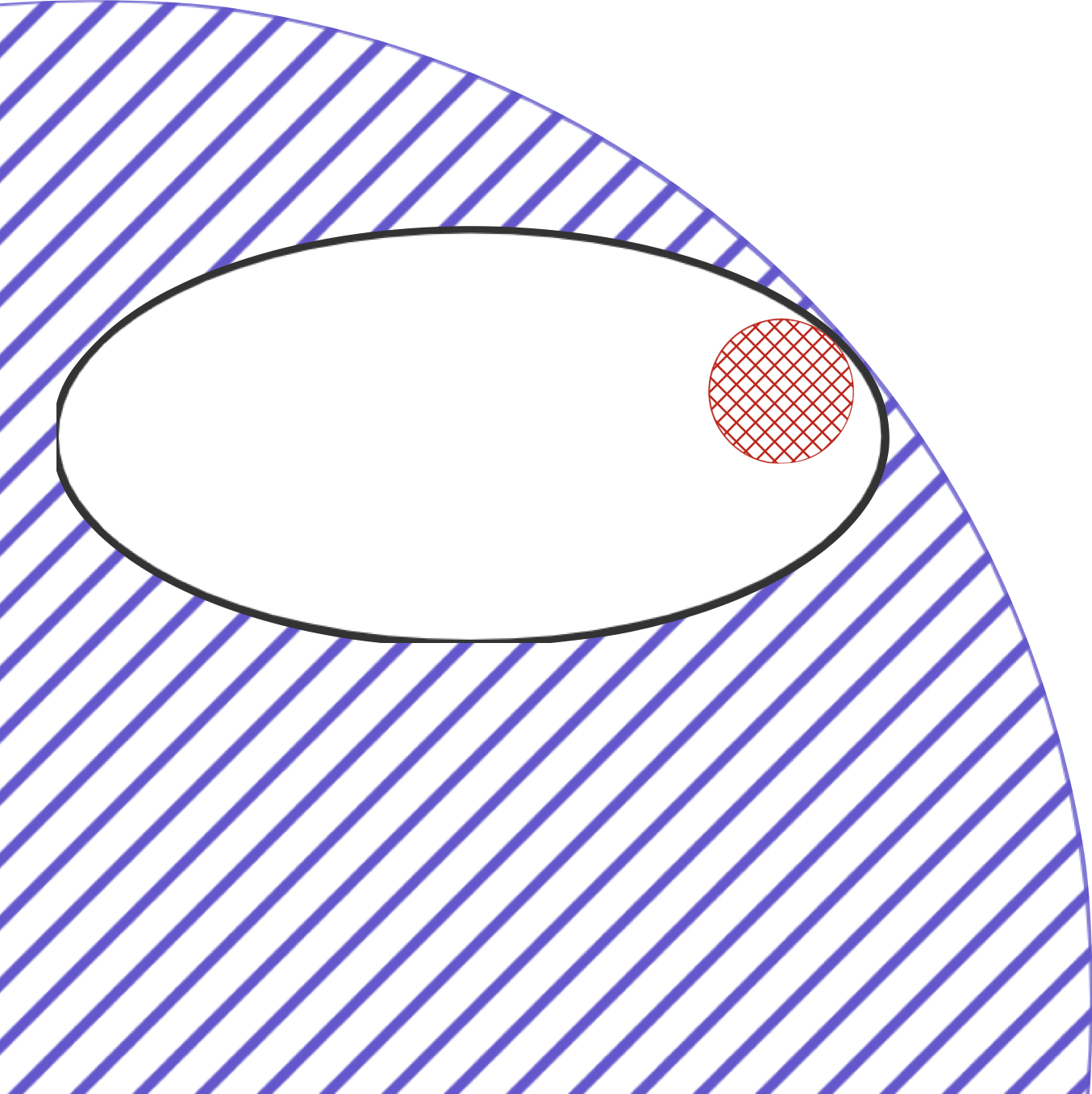}
	\caption*{(b) $p=2$-norm ellipsoid}
	\label{fig:sm}
\end{minipage}
\begin{minipage}[t]{0.32\linewidth}
	\centering
	\includegraphics[width=0.8\linewidth]{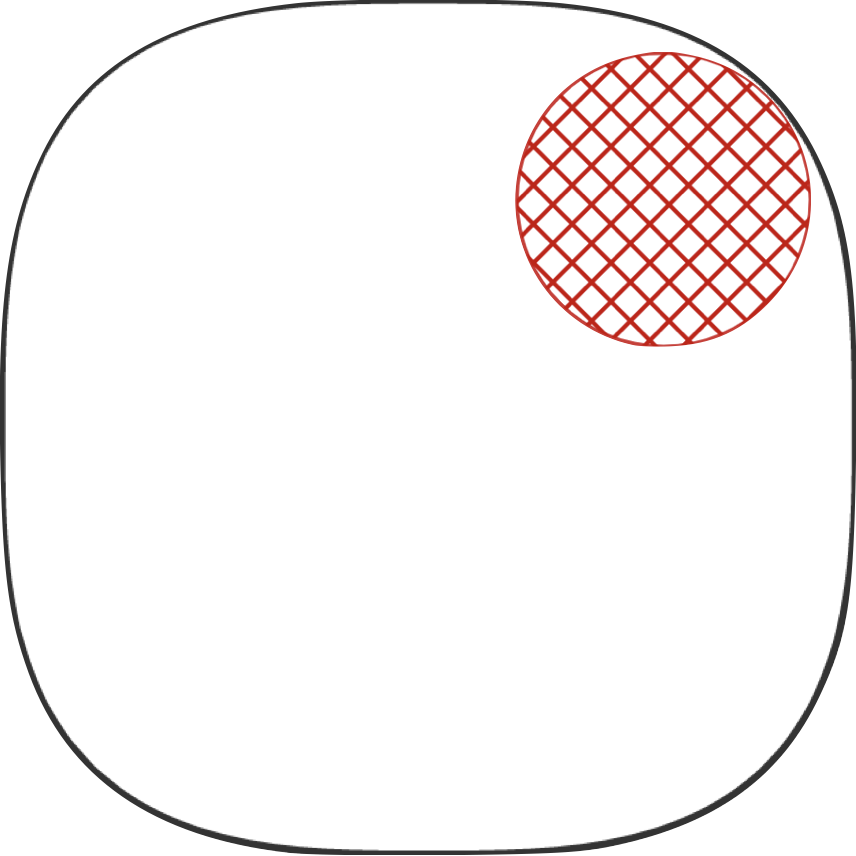}
	\caption*{(c) $p=3$-norm ball }
	\label{fig:sc_sm}
\end{minipage}
\caption{Examples of strongly convex and smooth sets with the inner approximation~\eqref{eq:inner_approx} in red and the outer approximation~\eqref{eq:outer_approx} in blue. The $p=1.5$-norm ball in (a) is strongly convex but not smooth. The $p=2$-norm ellipsoid in (b) is smooth and strongly convex. The $p=3$-norm ball in (c) is smooth but not strongly convex.}
\label{fig:def}
\end{figure}

\noindent {\bf Example 1. p-Norms and Schatten p-Norms.} For $\mathcal{E} = \RR^n,$ denote the $p$-norm of a vector by $\|x\|_p^p=\sum_{i=1}^n|x_i|^p$. The $p$-norm ball $B_p(0,1) = \{ x \mid \|x\|_p\leq 1\}$ is either smooth or strongly convex depending on the choice of $p\in(1,\infty)$. This ball is smooth with constant $\beta = (p-1)n^{\frac{1}{2}-\frac{1}{p}}$ when $p\in [2,\infty)$ and strongly convex with constant $\alpha= (p-1)n^{\frac{1}{2}-\frac{1}{p}}$ when $p\in (1,2]$~\cite[Lemma 3]{garber2015faster}. See Figure~\ref{fig:def}. Note for $p=1$ or $p=\infty$, $B_p$ is a polyhedron and hence is neither smooth nor strongly convex. Similarly, in the space of matrices with trace inner product, the Schatten $p$-norm defined by the $p$-norm of a matrix's singular values $\|X\|_p^p = \sum_{i=1}^d\sigma_i(X)^p$ yields smooth and/or strongly convex unit balls under the same conditions~\cite[Lemma 6]{garber2015faster}.

Norm constraints have found widespread usage describing trust regions, adding regularization, and inducing approximate sparsity (low-rankness) for (Schatten) norms as $p\searrow 1$. These will provide a working example throughout for applying our theory in Section~\ref{subsec:ex} and for numerics in Section~\ref{sec:appli}. 

\noindent {\bf Example 2. Functional Constraints.} Often, constraint sets are given by a level set of some function $h$, $S=\{x \mid h(x) \leq z\}$ for some fixed $z\in\mathbb{R}$. This level set inherits the smoothness and/or strong convexity of the function $h$ as stated below, allowing $L=\infty$ for nonsmooth function and $\mu=0$ for non-strongly convex functions. These properties are discussed more generally for potentially nonconvex functions in~\cite[Proposition 4.14]{vial1983strong}.
\begin{lemma} \label{lemma:level}
    Consider any closed convex $h$ that is locally $L$-smooth and $\mu$-strongly convex w.r.t. some $x$ with $h(x)=z$ and $0\neq g\in \partial h(x)$. Then $S=\{x \mid h(x) \leq z\}$ is locally $L/\|g\|$-smooth w.r.t.~$(x,\frac{g}{\|g\|})$ and $\mu/\|g\|$-strongly convex w.r.t.~$(x,\frac{g}{\|g\|})$.
\end{lemma}
\begin{proof}
      Note $\frac{g}{\|g\|}$ is a unit length normal vector of $S$ at $x$. Then $L$-smoothness ensures $h(y) \leq z$ holds whenever $h(x) + g^T(y-x) + \frac{L}{2}\|y-x\|^2 \leq z$ (i.e., $y\in B(x - \frac{\|g\|}{L} \frac{g}{\|g\|}, \frac{\|g\|}{ L })$). Similarly, $\mu$-strong convexity ensures $h(y)\leq z$ implies $h(x) + g^T(y-x) + \frac{\mu}{2}\|y-x\|^2 \leq z$ (i.e., $y\in B(x - \frac{\|g\|}{\mu} \frac{g}{\|g\|}, \frac{\|g\|}{\mu})$).
\end{proof}

\noindent {\bf Example 3. Epigraphs.} Similarly, the epigraph of a smooth and/or strongly convex function is locally smooth and/or strongly convex (proof deferred to Appendix~\ref{pf: epigraph}). 
\begin{lemma} \label{lemma:epigraph}
    Consider any closed convex $h$ that is locally $L$-smooth, $\mu$-strongly convex w.r.t. some $x$ and $g\in\partial h(x)$ and let $(\zeta,\delta) = (g,-1)/\|(g,-1)\|$. Then $\epi h = \{(x,t)\mid h(x) \leq t\}$ is locally $L|\delta|$-smooth and $\mu|\delta|^3$-strongly convex w.r.t. $((x,h(x)), (\zeta,\delta))$.
\end{lemma}
This implies the epigraph of any $L$-smooth function is globally $L$-smooth since $|\delta|\leq 1$. No similar global result holds for strongly convex functions since globally strongly convex sets must be bounded.

\noindent {\bf Example 4. Affine Transformations.} For any $L$-smooth $\mu$-strongly convex function $h$, its affine transformation $h(Ax+b)$ is well-known to be $\lambda_{ \max}(A^TA)L$-smooth and $\lambda_{\min}(A^TA)\mu$-strongly convex. Note that if $A$ has a null space, $A^TA$ has a zero eigenvalue and so strong convexity is lost. A weakened version of these two properties extends to sets, stated below. 
\begin{lemma} \label{lemma:aff_trans}
    Consider any closed, $\beta$-smooth, $\alpha$-strongly convex set $S$ and linear invertible $A\colon \mathcal{E}\rightarrow \mathcal{E}'$, $b\in\mathcal{E}'$. Then the set $\{x \mid Ax-b\in S\}$ is $\beta\frac{\lambda _{\max}(A^TA)}{\sqrt{\lambda_{\min}(A^TA)}}$-smooth and $\alpha \frac{\lambda _{\min}(A^TA)}{\sqrt{\lambda_{\max}(A^TA)}}$-strongly convex. 
\end{lemma}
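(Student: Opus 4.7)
The plan is to pull back the inscribed ball that witnesses $\beta$-smoothness of $S$ through the affine preimage map. Let $T := \{x \mid Ax + b \in S\}$, fix $x \in \partial T$ with unit outward normal $\zeta \in N_T(x)$, and set $y := Ax + b \in \partial S$. By the normal cone chain rule for affine preimages (valid under the injectivity of $A$ that is implicit in the bound, together with the standard interior constraint qualification coming from $e \in \operatorname{int} S$), any such $\zeta$ takes the form $\zeta = A^*\tilde\eta / \kappa$ for some unit $\tilde\eta \in N_S(y)$, where $\kappa := \|A^*\tilde\eta\|$. The Rayleigh identity $\|A^*\tilde\eta\|^2 = \langle AA^*\tilde\eta,\, \tilde\eta\rangle$, combined with the fact that the nonzero spectra of $AA^*$ and $A^*A$ coincide, supplies the quantitative lower bound $\kappa \geq \sqrt{\lambda_{\min}(A^*A)}$.

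I would then claim $B(x - \zeta/\beta',\, 1/\beta') \subseteq T$ for $\beta' := \beta\, \lambda_{\max}(A^*A)/\kappa$, which is at most the target smoothness constant $\beta\, \lambda_{\max}(A^*A)/\sqrt{\lambda_{\min}(A^*A)}$. Take any $u = x - \zeta/\beta' + w$ with $\|w\| \leq 1/\beta'$; it suffices to show $Au+b$ lies in the inscribed ball $B(y - \tilde\eta/\beta,\, 1/\beta)$ provided by $\beta$-smoothness of $S$, which forces $u \in T$. Expanding and using $A^*\tilde\eta = \kappa\zeta$,
\begin{align*}
\|Au + b - (y - \tilde\eta/\beta)\|^2 &= \|A(u-x) + \tilde\eta/\beta\|^2 \\
&\leq \lambda_{\max}(A^*A)\,\|u-x\|^2 + \frac{2\kappa}{\beta}\langle u-x, \zeta\rangle + \frac{1}{\beta^2}.
\end{align*}
The proof closes via the elementary inequality $\|u-x\|^2 \leq -(2/\beta')\langle u-x, \zeta\rangle$, obtained directly by expanding $\|-\zeta/\beta' + w\|^2$ and noting $\|w\|^2 \leq 1/(\beta')^2$. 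Substituting this bound and using the defining identity $\beta' \kappa = \beta\lambda_{\max}(A^*A)$, the first two terms on the right cancel exactly, leaving $\|\cdot\|^2 \leq 1/\beta^2$, as required.

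The main obstacle is rigorously establishing the quantitative bound $\kappa \geq \sqrt{\lambda_{\min}(A^*A)}$. It is immediate when $A$ is square invertible, since then $\lambda_{\min}(A^*A) = \lambda_{\min}(AA^*)$ and the Rayleigh inequality yields the claim. For rectangular injective $A$ one must first restrict attention to unit normals $\tilde\eta$ lying in $\operatorname{range}(A)$, which is justified because only such $\tilde\eta$ produce nonzero $A^*\tilde\eta$ and hence valid unit normals of $T$; on that subspace $A^*$ has smallest singular value $\sqrt{\lambda_{\min}(A^*A)}$ (its nonzero singular values match those of $A$), giving the bound. Everything else is a direct algebraic verification.
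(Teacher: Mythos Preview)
Your core computation is correct and takes a more elementary route than the paper. The paper pulls the inscribed ball in $S$ back to an ellipsoid $\{y\mid \|Ay-b-c\|\le 1/\beta\}$ in the domain and then appeals to Lemma~\ref{lemma:level} to extract an inscribed ball of the right radius from that ellipsoid. You bypass the ellipsoid and the level-set lemma entirely: you push a candidate ball in the domain forward through $A$ and check directly, via the cancellation identity $\beta'\kappa=\beta\lambda_{\max}(A^*A)$ together with $\|u-x\|^2\le -(2/\beta')\langle u-x,\zeta\rangle$, that it lands in the inscribed ball for $S$. This is cleaner and makes the dependence on the specific normal direction explicit (you in fact prove the sharper pointwise bound $\beta'=\beta\lambda_{\max}/\kappa$).

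The one place to tighten is the rectangular injective case. Your justification that ``only $\tilde\eta\in\operatorname{range}(A)$ produce nonzero $A^*\tilde\eta$'' is not right: $\ker A^*=(\operatorname{range}A)^\perp$, so any $\tilde\eta$ with a nonzero range$(A)$-component gives $A^*\tilde\eta\neq0$. Since $S$ is smooth, $\tilde\eta$ is the \emph{unique} unit normal at $y$ and cannot be chosen to lie in $\operatorname{range}(A)$; if it has a nontrivial $(\operatorname{range}A)^\perp$-component then $\kappa=\|A^*\tilde\eta\|$ can drop below $\sqrt{\lambda_{\min}(A^*A)}$. (Concretely: $A x=(x,0)$, $b=(0,1/2)$, $S$ the unit disk gives $\kappa=\sqrt{3}/2<1=\sqrt{\lambda_{\min}(A^*A)}$.) The paper's proof via Lemma~\ref{lemma:level} runs into the same obstruction---the infimum of $\|\nabla h\|$ along the preimage ellipsoid sees exactly this $\kappa$---so the statement itself should be read under the tacit square-invertible hypothesis, where your Rayleigh argument $\kappa^2\ge\lambda_{\min}(AA^*)=\lambda_{\min}(A^*A)$ is immediate and your proof is complete.
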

\begin{proof}
    Without loss of generality, consider $b=0$. Note $\{x \mid Ax\in S\}=A^{-1}S$. Since $S$ is $\beta$-smooth, by Proposition~\ref{prop:smooth_set}(c), there exists a closed convex set $S_0$, such that $S_0+B(0,\frac{1}{\beta})=S$. In particular, $A^{-1}S_0+A^{-1}B(0,\frac{1}{\beta})=A^{-1}S$. Proposition~\ref{prop:smooth_set}(c) applied to $A^{-1}B(0,\frac{1}{\beta})$ further gives a closed convex set $B_0$ such that $B_0+B\left(0,\frac{1}{\beta}\frac{\sqrt{\lambda_{\min}(A^TA)}}{\lambda_{\max}(A^TA)}\right)=A^{-1}B(0,\frac{1}{\beta})$. Since $A^{-1}B(0,\frac{1}{\beta})$ is a compact ellipsoid, $B_0$ has to be a compact set. It follows $A^{-1}S_0+B_0$ is closed and convex. Hence $\left(A^{-1}S_0+B_0\right)+B\left(0,\frac{1}{\beta}\frac{\sqrt{\lambda_{\min}(A^TA)}}{\lambda_{\max}(A^TA)}\right)=A^{-1}S$ and so applying Proposition~\ref{prop:smooth_set}(c) again with the closed convex set $A^{-1}S_0+B_0$, we conclude $A^{-1}S$ is $\beta\frac{\lambda_{\max}(A^TA)}{\sqrt{\lambda_{\min}(A^TA)}}$-smooth. Symmetric reasoning shows $S$ is $\alpha \frac{\lambda _{\min}(A^TA)}{\sqrt{\lambda_{\max}(A^TA)}}$-strongly convex. 
\end{proof}

\noindent {\bf Example 5. Minkowski Sums.} The Minkowski sum of two sets, defined as $S+T=\{s+t \mid s\in S, t\in T\}$, inherits smoothness from either set and remains strongly convex only if both sets are. These set properties are the reverse of the typical properties of summing functions. There $f+g$ is smooth if both $f$ and $g$ are but inherits strong convexity if either $f$ or $g$ is. The following lemma formalizes this.
\begin{lemma} \label{lemma:mink_sum}
   Consider any closed, $\beta_i$-smooth, $\alpha_i$-strongly convex sets $S_i$. Then $S_1+S_2$ is $(\beta_1^{-1}+\beta_2^{-1})^{-1}$-smooth and $(\alpha_1^{-1}+\alpha_2^{-1})^{-1}$-strongly convex.
\end{lemma}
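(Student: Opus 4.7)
The plan is to prove only the smoothness half of the lemma, since the strong-convexity half is cited from Polovinkin. The key observation is that both sides of the inclusion appearing in the smoothness definition~\eqref{eq:inner_approx} behave very cleanly under Minkowski addition, so the argument reduces to two elementary geometric facts: (i) any boundary point of $S_1+S_2$ together with a unit outer normal $\zeta$ decomposes into a sum of boundary points of $S_1$ and $S_2$ sharing the same unit normal $\zeta$; and (ii) the Minkowski sum of two Euclidean balls is the ball whose center and radius are the sums of the respective centers and radii.

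With these in hand, the proof proceeds in two short steps. First, fix a boundary point $x$ of $S_1+S_2$ and a unit normal $\zeta \in N_{S_1+S_2}(x)$. Using additivity of the support function, $h_{S_1+S_2}(\zeta)=h_{S_1}(\zeta)+h_{S_2}(\zeta)$, I would write $x=x_1+x_2$ with each $x_i \in \argmax_{y\in S_i}\langle \zeta,y\rangle$; in particular $x_i \in \mathrm{bdry}(S_i)$ and $\zeta \in N_{S_i}(x_i)$. Second, apply the $\beta_i$-smoothness of each $S_i$ via~\eqref{eq:inner_approx} to obtain $B(x_i - \zeta/\beta_i,\, 1/\beta_i) \subseteq S_i$ for $i=1,2$, and take Minkowski sums of both sides. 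By fact (ii), the left-hand side collapses to a single ball, yielding
\[
B\!\left(x - \zeta\left(\tfrac{1}{\beta_1}+\tfrac{1}{\beta_2}\right),\; \tfrac{1}{\beta_1}+\tfrac{1}{\beta_2}\right) \subseteq S_1 + S_2.
\]
Setting $\beta := (\beta_1^{-1}+\beta_2^{-1})^{-1}$, this is precisely the inner-ball inclusion required for $S_1+S_2$ to be $\beta$-smooth.

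The main obstacle I foresee is the shared-normal decomposition in the first step: I must argue not only that $x$ decomposes as $x_1+x_2$ with $x_i\in S_i$ (which is immediate by definition of the Minkowski sum) but that the decomposition can be chosen with a common outer normal $\zeta$. I expect this to follow from a standard support-function argument: any decomposition $x=x_1+x_2$ with $\langle \zeta, x_j\rangle < h_{S_j}(\zeta)$ for some $j$ would force $\langle \zeta,x\rangle < h_{S_1+S_2}(\zeta)$, contradicting $\zeta \in N_{S_1+S_2}(x)$, so the support values must agree and hence $\zeta$ is normal to both $S_i$ at $x_i$. It is also worth noting a mild edge case: if one $\beta_i=\infty$ (i.e., that $S_i$ is not assumed smooth), the trivial inclusion $\{x_i\}\subseteq S_i$ replaces the $\beta_i$-smoothness inclusion, and the formula correctly collapses to $\beta=\beta_{\text{other}}$, matching the intuition that a single rolling ball in the smooth summand suffices.
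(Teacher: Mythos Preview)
Your proposal is correct and follows a genuinely different route from the paper.

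The paper's proof does not work directly with the boundary-point-and-normal definition~\eqref{eq:inner_approx}. Instead it invokes two external characterizations: first, that $\beta$-smoothness of $S$ is equivalent (via \cite[Theorem~3.6]{daras2017operations} and \cite[Proposition~2]{ivanov2006weakly}) to the property that \emph{every} point $x\in S$, not just boundary points, lies in some translated ball $d+B_{1/\beta}\subseteq S$. With this ``union of balls of radius $1/\beta$'' reformulation in hand, the proof is one line: decompose $x=x_1+x_2$, find balls $d_i+B_{1/\beta_i}\subseteq S_i$ through each $x_i$, and add.

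Your approach stays with the original definition and instead supplies the missing ingredient---that the decomposition $x=x_1+x_2$ can be taken with a \emph{shared} unit normal $\zeta$---via the standard support-function identity $h_{S_1+S_2}=h_{S_1}+h_{S_2}$. This is more self-contained: you do not need the weak-convexity machinery or the cited equivalences, only elementary convex analysis. The paper's route is shorter once the citations are granted, but yours makes explicit exactly why the same normal direction works for both summands, which is the geometric heart of the matter. Both arguments ultimately rely on the same ball-addition identity $B(c_1,r_1)+B(c_2,r_2)=B(c_1+c_2,r_1+r_2)$, and both tacitly assume the Minkowski sum is closed (or that boundary points are attained), a technicality neither treatment addresses.
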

\begin{proof}
    These results follow directly from equivalent characterization (c) of smoothness and strong convexity. For smoothness, Proposition~\ref{prop:smooth_set} ensures there exist closed convex $S_{0,i}$ such that $S_{0,i}+B(0,\frac{1}{\beta_i})=S_i$. Hence $S_1+S_2=S_{0,1}+B(0,\frac{1}{\beta_1})+S_{0,2}+B(0,\frac{1}{\beta_2})=S_{0,1}+S_{0,2}+B(0,\frac{1}{\beta_1}+\frac{1}{\beta_2})$, and since $S_{0,1}+S_{0,2}$ is a closed convex set, $S_1+S_2$ is $( \beta_1^{-1}+\beta_2^{-1})^{-1}$-smooth. Similarly for strong convexity, Proposition~\ref{prop:sc_set} ensures there exist closed convex $S_{0,i}$ such that $S_{0,i}+S_i = B(0,\frac{1}{\alpha_i})$. It follows that $S_1+S_2+S_{0,1}+S_{0,2} = B(0,\frac{1}{\alpha_1}+\frac{1}{\alpha_2})$, and since $S_{0,1}+S_{0,2}$ is a closed convex set, $S_1+S_2$ is $( \alpha_1^{-1}+\alpha_2^{-1})^{-1}$-strongly convex.
\end{proof}
\noindent In particular, noting the ball $B_\epsilon=\{x\mid \|x\|_2\leq \epsilon\}$ is $1/\epsilon$-smooth for any $\epsilon>0$, this lemma gives a natural $1/\epsilon$-smoothing of any nonsmooth set $S$ by $S+B_\epsilon$. However, checking membership in $S+B_\epsilon$ amounts to orthogonally projecting onto $S$, which is only computable for sufficiently simple $S$. 

\noindent {\bf Example 6. Intersections of strongly convex sets.} Lastly, we note that just as the maximum of several strongly convex functions is strongly convex, the intersections of strongly convex sets are strongly convex (see \cite[Proposition 2]{vial1982strong}).
\begin{lemma}
Consider any closed $\alpha$-strongly convex sets $S_i$ with nonempty intersection. Then $\cap_i S_i$ is $\alpha$-strongly convex.
\end{lemma}

    \section{The Structure of Gauges of Structured Sets}\label{sec:main_result}

In this section, we prove characterizations relating a set's strong convexity and smoothness to those of its gauge squared. For notational ease, we suppose $e=0$ and denote $\gamma_S = \gamma_{S,e}$. Further, let
\begin{align}
    D(S) &:= \sup \{\|x\| \mid x\in S\}  \label{eq:D} \ ,\\
    R(S) &:= \inf \{\|x\| \mid x\notin S\} \label{eq:R} \ .
\end{align} 
Note $\bar y = y/\gamma_S(y)$ is on the boundary of $S$ unless $\gamma_S(y)=0$.
The following theorem gives the local result: the gauge squared is strongly convex at $y$ if the set is at $\bar y$. We defer the proof of this theorem to Section~\ref{pf:gauge-strong-convexity}.

\begin{theorem} \label{thm:gauge-strong-convexity}
    Consider some constant $\alpha\in (0,\infty)$, any $y\in \mathcal{E}$ and closed convex set $S$ with $0\in \mathrm{int\ } S$. If $\gamma_S(y) \neq 0$ and at the point $\bar y = y/\gamma_S(y)$, $S$ is $\alpha$-strongly convex w.r.t.~$\left(\bar y, \zeta\right)$ for some unit normal vector $\zeta \in N_S(\bar y)$, then $\frac{1}{2}\gamma_S^2$ is strongly convex with parameter 
    \begin{equation}\label{parameter:strongly_convex}
    \frac{1}{2( \zeta^T \bar y)^3}\left( \zeta^T \bar y+\alpha\|\bar y\|^2-\sqrt{( \zeta ^T\bar y+\alpha\|\bar y\|^2)^2-4\alpha( \zeta^T \bar y)^3}\right) 
    \end{equation}
    w.r.t.~$(y, g)$, where $g =\frac{\gamma_S(y)\zeta}{\zeta^T\bar y} \in \partial (\frac{1}{2}\gamma_S^2)(y)$.
    If $\gamma_S(y)=0$ and $y = 0$, then $\{0\} = \partial (\frac{1}{2}\gamma_S^2)(y)$ and $\frac{1}{2}\gamma_S^2$ is $\frac{1}{D(S)^2}$-strong convex w.r.t.~$(0,0)$. If $\gamma_S(y)=0$ and $y \neq 0$, $\frac{1}{2}\gamma_S^2(y)$ is not strongly convex at $y$.
\end{theorem}
Note that~\eqref{parameter:strongly_convex} can be lower bounded by the following $O(\alpha)$ quantity:
\begin{equation} \label{eq:gauge-strong-convexity-bound}
    \frac{\alpha}{\zeta^T\bar y + \alpha \|\bar y\|^2} \ .
\end{equation}
Similarly, we find the following guarantee that the smoothness of $\frac{1}{2}\gamma_S^2$ at $y$ follows from the smoothness of $S$ at $\bar y$. The proof of this theorem is deferred to Section~\ref{pf:gauge-smoothness}.
\begin{theorem}\label{thm:gauge-smoothness}
    Consider some constant $\beta\in (0,\infty)$, any $y\in \mathcal{E}$ and closed convex set $S$ with $0\in \mathrm{int\ } S$. If $\gamma_S(y) \neq 0$ and at the point $\bar y = y/\gamma_S(y)$, $S$ is $\beta$-smooth w.r.t.~$\left(\bar y, \zeta\right)$ for some unit normal vector $\zeta \in N_S(\bar y)$, then $\frac{1}{2}\gamma_S^2$ is smooth with parameter 
    \begin{equation} \label{parameter:smooth}
    \frac{1}{2( \zeta^T \bar y)^3}\left( \zeta^T \bar y+\beta\|\bar y\|^2+\sqrt{( \zeta ^T\bar y+\beta\|\bar y\|^2)^2-4\beta( \zeta^T \bar y)^3}\right)
    \end{equation}
    w.r.t.~$(y, g)$, where $g  =\nabla (\frac{1}{2}\gamma_S^2)(y)=\frac{\gamma_S(y)\zeta}{\zeta^T\bar y}$. If $\gamma_S(y)=0$, then $\{0\} = \partial (\frac{1}{2}\gamma_S^2)(y)$ and $\frac{1}{2}\gamma_S^2$ is $\frac{1}{R(S)^2}$-smooth w.r.t.~$(y,0)$.
\end{theorem}

Note that~\eqref{parameter:smooth} can be upper bounded by the following $O(\beta)$ quantity:
\begin{equation} \label{eq:gauge-smooth-bound}
    \frac{\zeta^T \bar y+\beta \|\bar y\|^2}{(\zeta^T\bar y)^3} \ .
\end{equation}

The above two theorems provide the gauge squared's local strongly convex/smooth parameter. Then the gauge square's global strongly convex/smooth parameter can be obtained by bounding these over all possible points $\bar y$ (i.e. all boundary points) 
due to Propositions~\ref{prop:smooth_func} and~\ref{prop:sc_func}. We claim $\|\bar y\| \leq D(S)$ and $\zeta ^T \bar y \geq R(S)$. The first claim is immediate from the definition of $D(S)$. The second follows by considering the support function as $\zeta ^T \bar y = \sup_{z\in S}\{\zeta^T z\}\geq \sup_{z\in B(0,R(S))}\zeta ^T z = R(S)$. We then have the following global bounds.
\begin{corollary}\label{cor:gauge-strong-convexity-global}
    Consider a closed bounded convex set $S$ with $0\in \mathrm{int\ } S$. If $S$ is $\alpha$-strongly convex, then $\frac{1}{2}\gamma_S^2$ is strongly convex  with parameter
    $\frac{\alpha}{D(S)+\alpha D(S)^2}.
    $
\end{corollary}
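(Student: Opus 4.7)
The plan is to derive the global strong convexity constant of $\tfrac{1}{2}\gamma_S^2$ by taking the local parameter supplied by Theorem~\ref{thm:gauge-strong-convexity} and bounding it uniformly in terms of the geometric parameters $D$ and $R$ of $S$. First I would invoke the remark that immediately follows Theorem~\ref{thm:gauge-strong-convexity}, which states that the local strong convexity parameter at $(y,g)$ is at least
\[
\frac{\alpha}{\zeta^T\bar y + \alpha\|\bar y\|^2},
\]
whenever $\gamma_S(y)\neq 0$. This reduces the problem to showing that the denominator $\zeta^T\bar y + \alpha\|\bar y\|^2$ can be bounded above by $D+\alpha D^2$ uniformly over all $\bar y\in \bdry S$ and unit normals $\zeta\in N_S(\bar y)$.

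Both of those bounds are immediate from the standing hypothesis $\|x\|\leq D$ for $x\in S$: since $\bar y\in\bdry S\subseteq S$ we have $\|\bar y\|^2\leq D^2$, and the Cauchy--Schwarz inequality gives $\zeta^T\bar y\leq \|\zeta\|\,\|\bar y\|\leq D$. Assembling these, the local parameter is at least $\alpha/(D+\alpha D^2)$ at every pair $(y,g)$ with $\gamma_S(y)>0$.

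Next I would handle the degenerate case $\gamma_S(y)=0$. Since $S$ is bounded and $0\in\mathrm{int}\,S$, this can only occur at $y=0$ (any nonzero $y$ with $\gamma_S(y)=0$ would force the entire ray $\{y/\lambda : \lambda>0\}$ to lie in $S$). At $y=0$, Theorem~\ref{thm:gauge-strong-convexity} gives a local strong convexity parameter of $1/\sup\{\|x\|^2 : x\in S\}\geq 1/D^2$, and a quick comparison $1/D^2 \geq \alpha/(D+\alpha D^2)$ (which rearranges to $D\geq 0$) shows this case does not worsen the bound.

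Finally, to pass from this pointwise lower bound on the local strong convexity parameter to a global strong convexity statement for the convex function $\tfrac{1}{2}\gamma_S^2$, I would appeal to the standard fact that a convex function whose local strong convexity modulus at every $(y,g)\in \gph\partial(\tfrac12\gamma_S^2)$ is at least some $\mu>0$ is itself globally $\mu$-strongly convex (obtained by integrating the local quadratic lower bounds along a line segment between any two points, since $\tfrac12\gamma_S^2$ is finite-valued, continuous, and convex on all of $\mathcal{E}$). The main obstacle, to the extent that there is one, is making sure this local-to-global passage is justified for every pair of points in $\mathcal{E}$; the rest of the argument is just the two elementary inequalities above combined with the already-established Theorem~\ref{thm:gauge-strong-convexity}.
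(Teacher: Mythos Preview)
Your proposal is correct and follows essentially the same approach as the paper: invoke Theorem~\ref{thm:gauge-strong-convexity}, use the simplified lower bound $\alpha/(\zeta^T\bar y+\alpha\|\bar y\|^2)$ from the remark after it, and bound both $\zeta^T\bar y$ and $\|\bar y\|^2$ by $D$ and $D^2$ respectively. If anything, you are more careful than the paper, which leaves the $y=0$ case and the local-to-global passage implicit.
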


\begin{corollary}\label{cor:gauge-smoothness-global}
    Consider a closed bounded convex set $S$ with $0\in \mathrm{int\ } S$. If $S$ is $\beta$-smooth, then $\frac{1}{2}\gamma_S^2$ is smooth with parameter
    $\frac{R(S)+\beta D(S)^2}{R(S)^3}.
    $
\end{corollary}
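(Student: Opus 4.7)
\textbf{Proof plan for Corollary~\ref{cor:gauge-smoothness-global}.} The plan is to reduce the global smoothness claim to the pointwise bound in Theorem~\ref{thm:gauge-smoothness} and then uniformly upper bound that pointwise quantity using only the two constants $R$ and $D$. First, for every $y$ with $\gamma_S(y)\neq 0$, Theorem~\ref{thm:gauge-smoothness} provides a local smoothness constant at $y$; using the elementary estimate $\sqrt{(\zeta^T\bar y+\beta\|\bar y\|^2)^2-4\beta(\zeta^T\bar y)^3}\leq \zeta^T\bar y+\beta\|\bar y\|^2$, this constant is at most the simpler expression
\[
L(\bar y,\zeta)\;:=\;\frac{\zeta^T\bar y+\beta\|\bar y\|^2}{(\zeta^T\bar y)^3}
\]
noted immediately after the theorem.

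Next, I would extract two geometric inequalities. Since $\bar y\in S\subseteq B(0,D)$, the numerator satisfies $\zeta^T\bar y+\beta\|\bar y\|^2\leq \zeta^T\bar y+\beta D^2$. For the denominator I would use $B(0,R)\subseteq S$: for any unit normal $\zeta\in N_S(\bar y)$, the point $R\zeta\in B(0,R)\subseteq S$ and the defining inequality of the normal cone gives $\zeta^T(R\zeta-\bar y)\leq 0$, i.e.\ $\zeta^T\bar y\geq R$. Setting $t=\zeta^T\bar y\in[R,D]$, the bound reduces to $L(\bar y,\zeta)\leq (t+\beta D^2)/t^3$, and a one-line derivative computation shows this function is strictly decreasing in $t$ on $(0,\infty)$, so its maximum over $t\geq R$ is attained at $t=R$ and equals $(R+\beta D^2)/R^3$.

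The remaining case $\gamma_S(y)=0$ (which forces $y=0$) is handled by the second assertion of Theorem~\ref{thm:gauge-smoothness}: the local smoothness constant at $0$ equals $1/\inf\{\|x\|^2\mid x\notin S\}$. Since $B(0,R)\subseteq S$, every $x\notin S$ has $\|x\|\geq R$, so this constant is at most $1/R^2$, which is itself $\leq (R+\beta D^2)/R^3$. Thus the same uniform bound $(R+\beta D^2)/R^3$ controls the local smoothness of $\tfrac{1}{2}\gamma_S^2$ at every point.

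Finally, I would upgrade pointwise local smoothness with a uniform constant to the global smoothness condition. This is the step requiring the most care: one needs to know that if a convex function has a unique subgradient at each point and its local upper-quadratic constant is at most $L$ everywhere, then the global quadratic upper bound holds. I would invoke this via a standard one-dimensional argument (integrating along the segment from $x$ to $y$ the fact that the directional second derivative is bounded by $L$, which is exactly what local $L$-smoothness encodes), noting that uniqueness of the subgradient of $\tfrac12\gamma_S^2$ at $y\neq 0$ follows from the formula $g=\zeta/(\zeta^T\bar y)$ together with the smoothness of $S$ forcing a unique unit normal $\zeta$ at $\bar y$. I expect this last extension from pointwise to global to be the main obstacle; the bound manipulation itself is straightforward calculus once the two inequalities $\zeta^T\bar y\geq R$ and $\|\bar y\|\leq D$ are in hand.
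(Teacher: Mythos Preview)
Your proposal is correct and follows the same approach as the paper, which simply states (in the paragraph preceding the two corollaries) that one bounds the local constant from Theorem~\ref{thm:gauge-smoothness} using $\zeta^T\bar y\geq R$ and $\|\bar y\|\leq D$. You have filled in several details the paper leaves implicit: the monotonicity of $t\mapsto (t+\beta D^2)/t^3$ needed to land exactly on $(R+\beta D^2)/R^3$ rather than the looser $(D+\beta D^2)/R^3$, the separate treatment of $\gamma_S(y)=0$, and the passage from uniform local smoothness to global smoothness---none of which the paper spells out.
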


\subsection{Examples}\label{subsec:ex}
These theorems/corollaries above immediately establish the structure for the gauge squared of many common families of constraints. Here we discuss a few such examples, namely, halfspaces $\mathcal{H} = \{x \mid a^Tx \leq b\}$, $p$-norm unit balls $B_p=\{x \mid \|x\|_p \leq 1\}$, and $p$-norm ellipsoids $E_p=\{x \mid \|Ax-b\|_p \leq 1\}$. Table~\ref{tab:set-examples} summarizes these results, showing the smoothness and strong convexity of each $S$ and its half gauge squared. These examples will also be utilized in our numerical evaluations in Section~\ref{sec:appli}.

\begin{table}[t]
\centering
\begin{tabular}{|cc|c|c|c|}
\hline
\multicolumn{1}{|c}{}                     & &    &  Strong Convexity&  Smoothness\\ \hline\hline
\multicolumn{2}{|c|}{\multirow{2}{*}{$\mathcal{H}$}}                     &  \multicolumn{1}{c|}{$S$}    & 0 &0
\\
\multicolumn{2}{|c|}{}                     &  \multicolumn{1}{c|}{$\frac{1}{2}\gamma^2_S$}    & 0 &$\frac{\|a\|_2^2}{b^2}$ \\
\hline
\multicolumn{1}{|c|}{\multirow{3}{*}{\vspace{-1.7cm} $B_p$}} &   \multicolumn{1}{c|}{\multirow{2}{*}{$p\in (1,2)$}}&   $S$&  $(p-1)n^{\frac{1}{2}-\frac{1}{p}}$&$\infty$  \\ 
\multicolumn{1}{|c|}{} & \multicolumn{1}{c|}{} & $\frac{1}{2}\gamma^2_S$& $(p-1)n^{\frac{1}{2}-\frac{1}{p}}$ &$\infty$ \\ 
\multicolumn{1}{|c|}{}                  &  \multirow{2}{*}{$p=2$}&    $S$&  1& 1 \\ 
\multicolumn{1}{|c|}{} & \multicolumn{1}{c|}{} & $\frac{1}{2}\gamma^2_S$ &1 &1 \\ 
\multicolumn{1}{|c|}{}                  &  \multirow{2}{*}{$p\in (2,\infty)$}&    $S$& 0 & $(p-1)n^{\frac{1}{2}-\frac{1}{p}}$\\
\multicolumn{1}{|c|}{} & \multicolumn{1}{c|}{} & $\frac{1}{2}\gamma^2_S$ &0 &$(p-1)n^{\frac{1}{2}-\frac{1}{p}}$ \\ \hline
\multicolumn{1}{|c|}{\multirow{3}{*}{\vspace{-2cm} $E_p$}} &   \multicolumn{1}{c|}{\multirow{2}{*}{$p\in (1,2)$}}&   $S$&  $\frac{\lambda_{\min}}{\sqrt{\lambda_{\max}}}(p-1)n^{\frac{1}{2}-\frac{1}{p}}$&$\infty$  \\ 
\multicolumn{1}{|c|}{} & \multicolumn{1}{c|}{} & $\frac{1}{2}\gamma^2_S$&$C^{SC}_{p,b}\lambda_{\min}(p-1)n^{\frac{1}{2}-\frac{1}{p}}$ &$\infty$ \\ 
\multicolumn{1}{|c|}{}                  &  \multirow{2}{*}{$p=2$}&    $S$&  $\frac{\lambda_{\min}}{\sqrt{\lambda_{\max}}}$& $\frac{\lambda_{\max}}{\sqrt{\lambda_{\min}}}$ \\ 
\multicolumn{1}{|c|}{} & \multicolumn{1}{c|}{} & $\frac{1}{2}\gamma^2_S$ &$\frac{\lambda_{\min}}{(1+\|b\|_2)+(1+\|b\|_2)^2}$ &$\frac{\lambda_{\max}(2-\|b\|_2)}{(1-\|b\|_2)^2}$ \\ 
\multicolumn{1}{|c|}{}                  &  \multirow{2}{*}{$p\in (2,\infty)$}&    $S$& 0 & $\frac{\lambda_{\max}}{\sqrt{\lambda_{\min}}}(p-1)n^{\frac{1}{2}-\frac{1}{p}}$\\
\multicolumn{1}{|c|}{} & \multicolumn{1}{c|}{} & $\frac{1}{2}\gamma^2_S$ &0 &$C^{SM}_{p,b}\lambda_{\max}(p-1)n^{1-\frac{2}{p}}$ \\ \hline
\end{tabular}
\captionof{table}{Strong convexity and smoothness constants. For the $p$-norm ellipsoids, $\lambda_{\max}$ and $\lambda_{\min}$ denote the maximum and minimum eigenvalues of $A^TA$, $C^{SC}_{p,b}=\frac{1}{(1+\|b\|_2)+(p-1)n^{\frac{1}{2}-\frac{1}{p}}(1+\|b\|_2)^2}$, and $C^{SM}_{p,b} = \frac{9}{2}\left(\frac{p}{1-\|b\|_p^p}\right)^3$.} \label{tab:set-examples}
\end{table}

\noindent {\bf Halfspaces.} Consider any halfspace with $\mathcal{H}=\{x \mid a^Tx \leq b\}$. Such sets are not strongly convex ($\alpha=0$) but are infinitely smooth ($\beta=0$) everywhere. To ensure the gauge is well-defined, we require $0\in \mathrm{int\ }\mathcal{H}$ (that is, $b>0$). Note $D(\mathcal{H})=\infty$ and $R(\mathcal{H})=b/\|a\|_2$. Consider any $\bar y$ on the boundary of $\mathcal{H}$ and unit normal $\zeta = a/\|a\|_2\in N_{\mathcal{H}}(\bar y)$. Note $\zeta^T\bar y= b/\|a\|_2$. Then Theorem~\ref{thm:gauge-strong-convexity} vacuously implies $\mu=0$-strong convexity and Theorem~\ref{thm:gauge-smoothness} implies $L=\|a\|_2^2/b^2$-smoothness at $\bar y$.

We can directly compute the gauge of this set, verifying our theory's tightness
$$ \gamma_S(x) = \begin{cases} \frac{a^Tx}{b} & \text{ if } a^Tx > 0\\0 & \text{ otherwise}
\end{cases} \implies \frac{1}{2}\gamma_S^2(x) = \begin{cases} \frac{(a^Tx)^2}{2b^2} & \text{ if } a^Tx > 0\\0 & \text{ otherwise.}
\end{cases}
$$

Note more general polyhedrons $\{x \mid a_i^Tx \leq b_i\ \forall i=1,\dots,m\}$, with $b_i>0$ are neither smooth nor strongly convex, and similarly, their piecewise linear gauges are neither smooth nor strongly convex when squared. However, since the gauge of an intersection is the maximum of the gauges of its components, the resulting half gauge squared will be a finite maximum of several smooth functions. Section~\ref{sec:algs} discusses algorithms for such problems.

\noindent {\bf $p$-Norm Balls.} Consider any $p\in(1,\infty)$-norm unit ball $B_p =\{x \mid \|x\|_p \leq 1\}$. Depending on $p$, this ball and its gauge squared are either smooth or strongly convex. Namely~\cite[Lemma 4]{garber2015faster} showed $B_p$ and $\frac{1}{2}\gamma_{B_p}^2$ are both $\alpha=\mu=(p-1)n^{\frac{1}{2}-\frac{1}{p}}$-strongly convex whenever $p\in(1,2]$ and are $\beta=L=(p-1)n^{\frac{1}{2}-\frac{1}{p}}$-smooth whenever $p\in[2,\infty)$. Note that neither of these bounds are tight for $p\neq 2$. Our theory could be applied to yield tighter, although substantially less elegant, bounds. The details of such an approach are given in Appendix~\ref{append:ex}. 

\noindent {\bf $p$-Norm Ellipsoids.} As our last example, we consider generalizing the example above to ellipsoidal sets $E_p = \{x \mid \|Ax-b\|_p \leq 1\}$ for any $p\in (1,\infty)$ and $A$ invertible. Note for the gauge to be well-defined (i.e., having $0\in E_p$), we require $\|b\|_p \leq 1$.
The following lemma, mirroring Lemma~\ref{lemma:aff_trans}, allows us to bound the strong convexity and smoothness of such a set's gauge squared.

\begin{lemma} \label{lem:struc_trans}
If a set $S$ has $\frac{1}{2}\gamma_S^2$ $\alpha$-strongly convex or $\beta$-smooth, then $E = \{x \mid Ax \in S\}$ with invertible $A$, has $\frac{1}{2}\gamma^2_E(y)=\frac{1}{2}\gamma^2_S(Ay)$ being $\lambda_{\min}(A^TA)\alpha$-strongly convex or $\lambda_{\max}(A^TA)\beta$-smooth, respectively.
\end{lemma}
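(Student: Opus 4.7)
The plan is to reduce the claim to a standard fact about strong convexity and smoothness transforming under composition with a linear map, once the gauge identity $\gamma_E(y)=\gamma_S(Ay)$ is established.

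First, I would verify the gauge identity directly from the Minkowski definition~\eqref{eq:gauge}: since $y\in\lambda E$ iff $A(y/\lambda)\in S$ iff $Ay\in\lambda S$, we have $\gamma_E(y)=\inf\{\lambda>0\mid Ay\in\lambda S\}=\gamma_S(Ay)$. Squaring and halving gives $h(y):=\tfrac{1}{2}\gamma_E^2(y)=\tfrac{1}{2}\gamma_S^2(Ay)$, so the problem reduces to showing that composition with $A$ scales local strong convexity constants by $\lambda_{\min}(A^*A)$ and local smoothness constants by $\lambda_{\max}(A^*A)$. Let $g\in\partial(\tfrac{1}{2}\gamma_S^2)(x)$ be the subgradient (or gradient) witnessing the assumed structure at $x$; by the standard chain rule, $A^*g\in\partial h(A^{-1}x)$ is the natural candidate witness at $y=A^{-1}x$ (if $A$ is non-invertible, interpret $A^{-1}x$ as any preimage and $E$ as $\{y\mid Ay\in S\}$).

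For the strong convexity claim, I would start from the local definition at $x$,
$$\liminf_{x'\to x}\frac{\tfrac{1}{2}\gamma_S^2(x')-\tfrac{1}{2}\gamma_S^2(x)-g^T(x'-x)}{\tfrac{1}{2}\|x'-x\|^2}\geq \alpha,$$
and parametrize $x'=Ay'$. Then $x'\to x$ as $y'\to y$, the linear term matches since $(A^*g)^T(y'-y)=g^T(x'-x)$, and the numerator is exactly $h(y')-h(y)-(A^*g)^T(y'-y)$. The key algebraic input is the two-sided bound
$$\lambda_{\min}(A^*A)\|y'-y\|^2\le\|x'-x\|^2\le\lambda_{\max}(A^*A)\|y'-y\|^2,$$
whose lower half lets me substitute $\tfrac{1}{2}\|x'-x\|^2\geq \tfrac{\lambda_{\min}(A^*A)}{2}\|y'-y\|^2$ into the strong-convexity denominator, and whose upper half ensures that the $o(\|x'-x\|^2)$ error from the local definition is still $o(\|y'-y\|^2)$. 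This produces the required $\liminf\geq \alpha\lambda_{\min}(A^*A)$.

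The smoothness case is entirely symmetric, replacing $\liminf$ by $\limsup$ and using the upper half of the bound on $\|x'-x\|^2$ in the numerator together with the lower half on the denominator's error term. Since smoothness requires differentiability, I would note $g=\nabla(\tfrac{1}{2}\gamma_S^2)(x)$ exists by Theorem~\ref{thm:gauge-smoothness}, so $\nabla h(A^{-1}x)=A^*g$ by the chain rule, justifying the use of $A^*g$ as a genuine gradient rather than a mere subgradient. The only real obstacle is bookkeeping with the local (rather than global) definitions from Section~\ref{sec:prelim}: one must carefully check that the $o$-terms survive the change of variables $x'\leftrightarrow Ay'$, which the spectral bounds above make routine. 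Everything else is immediate once the gauge identity and chain rule are in hand.
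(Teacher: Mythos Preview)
Your proposal is correct and shares the same opening move as the paper---establishing $\gamma_E(y)=\gamma_S(Ay)$ from the Minkowski definition---but diverges in how the structure constants are transferred. The paper simply invokes the second-order chain rule $\nabla^2\tfrac{1}{2}\gamma_E^2(y)=A^*\nabla^2\tfrac{1}{2}\gamma_S^2(Ay)A$ and reads off the eigenvalue bounds $\lambda_{\min}(A^*A)\alpha$ and $\lambda_{\max}(A^*A)\beta$ directly. You instead work at the level of the local $\liminf/\limsup$ quotients from Section~\ref{sec:prelim}, pushing the change of variables $x'=Ay'$ through those quotients and using the spectral inequalities $\lambda_{\min}(A^*A)\|y'-y\|^2\le\|Ay'-Ay\|^2\le\lambda_{\max}(A^*A)\|y'-y\|^2$ on the denominator. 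Your route is slightly longer but more faithful to the paper's own local definitions: the Hessian argument tacitly assumes $\tfrac{1}{2}\gamma_S^2$ is twice differentiable at $x$, which need not hold when only local strong convexity (and not smoothness) is assumed, whereas your quotient argument goes through regardless. The paper's version is quicker when $C^2$ regularity is available; yours is the one that actually matches the generality of the lemma as stated.
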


\begin{proof}
    By the definition of the gauge, we have
\begin{align*}
\gamma_E(y) =& \inf\{\lambda>0 \mid y \in \lambda E\} 
=\inf \{\lambda>0 \mid Ay \in \lambda S\}=\gamma_S(Ay) \ . 
\end{align*}    
Since $\frac{1}{2}\gamma_S^2(Ay)$ is the affine transformation of $\frac{1}{2}\gamma_S^2(y)$, then it is $\lambda_{\min}(A^TA)\alpha$-strongly convex or $\lambda_{\max}(A^TA)\beta$-smooth, respectively.
\end{proof}

Using this lemma, it suffices to bound the gauge squared of translated balls $T_p=\{x \mid \|x-b\|_p \leq 1\}$ to deduce bounds on $E_p$.
First, we compute bounds when $p=2$. Noting $T_2=\{x \mid \|x-b\|_2 \leq 1\}$ is 1-strongly convex and 1-smooth, bounds on $E_2$'s $\alpha$-strong convexity and $\beta$-smoothness follow from Lemma~\ref{lemma:aff_trans}. Theorems~\ref{thm:gauge-strong-convexity} and~\ref{thm:gauge-smoothness} tell us $\frac{1}{2}\gamma_{T_2}^2$ is strongly convex and smooth with
\begin{align*}
    \mu &= \inf_{\|\bar y -b\|_2=1} \frac{1}{(\bar y-b)^T\bar y+\|\bar y\|^2_2} = \inf_{\|\bar y -b\|_2=1} \frac{1}{2+3(\bar y-b)^Tb+\|b\|^2_2}
    = \frac{1}{(1+\|b\|_2)(2+\|b\|_2)} \ , \\
    L &= \sup_{\|\bar y -b\|_2=1} \frac{(\bar y-b)^T\bar y+\|\bar y\|_2^2}{((\bar y-b)^T\bar y)^3} = \sup_{\|\bar y -b\|_2=1} \frac{2+3(\bar y-b)^Tb+\|b\|^2_2}{(1+(\bar y-b)^Tb)^3}
    =\frac{2-\|b\|_2}{(1-\|b\|_2)^2} \ ,
\end{align*}
where the infimum and supremum are taken by Cauchy-Schwarz $-\|b\|_2 \leq (\bar y-b)^Tb\leq \|b\|_2$.
Then following Lemma~\ref{lem:struc_trans}, $\frac{1}{2}\gamma_{E_2}^2$ is $\frac{\lambda_{\min}(A^TA)}{(1+\|b\|_2)(2+\|b\|_2)}$-strongly convex and $\frac{\lambda_{\max}(A^TA)(2-\|b\|_2)}{(1-\|b\|_2)^2}$-smooth.

A similar calculation holds for general $p$, although we lack a closed form for the infimum and supremum over all boundary points. Since $T_p$ is a translated $p$-norm ball, it has the same strong convexity/smoothness constants as $B_p$. Lemma~\ref{lemma:aff_trans} then ensures the ellipsoid $E_p$ is $\frac{\lambda_{\min}(A^TA)}{\sqrt{\lambda_{\max}(A^TA)}}(p-1)n^{\frac{1}{2}-\frac{1}{p}}$-strongly convex if $1<p \leq 2$, and $\frac{\lambda_{\max}(A^TA)}{\sqrt{\lambda_{\min}(A^TA)}}(p-1)n^{\frac{1}{2}-\frac{1}{p}}$-smooth if $p\geq 2$.
The strong convexity and smoothness constants of functions $\frac{1}{2}\gamma^2_{E_p}$ do not have such simple closed forms. In Appendix~\ref{append:ex}, we compute bounds on these constants for any translated $p$-norm ball $T_p$, from which applying Lemma~\ref{lem:struc_trans} gives $O\left(\lambda_{\min}(A^TA)n^{\frac{1}{2}-\frac{1}{p}}\right)$-strong convexity and $O\left(\lambda_{\max}(A^TA)\frac{n^{1-\frac{2}{p}}}{(1-\|b\|_p^p)^2}\right)$-smoothness when $1<p\leq 2$ and $2\leq p<\infty$, respectively.

\subsection{Tightness of our Main Theorems and their Converses}
Our main Theorems~\ref{thm:gauge-strong-convexity} and~\ref{thm:gauge-smoothness} have shown that a strongly convex/smooth set has a strongly convex/smooth gauge function squared. Here we observe that these results are essentially tight in two respects. Theorem~\ref{thm:hessian-blow} below shows that no improvement in these constants is possible. Then Theorems~\ref{thm:gauge-strong-convexity-converse} and~\ref{thm:gauge-smoothness-converse} show the converse results, that if a set's gauge squared is $\alpha$-strongly convex or $\beta$-smooth then the set itself must be $O(\alpha)$-strongly convex or $O(\beta)$-smooth, respectively. Proofs of each of these results are deferred to Sections~\ref{proof:hessian-blow}, \ref{pf:gauge-strong-convexity-converse}, and~\ref{pf:gauge-smoothness-converse}.

\begin{theorem}\label{thm:hessian-blow}
    For any values of $\gamma,R, D>0$, there exists a convex set $S$, $\bar y\in \bdry S$ and unit $\zeta\in N_{S}(\bar y)$ such that, $S$ is $\gamma$-strongly convex and $\gamma$-smooth with respect to $(\bar y,\zeta)$ and
    $$\lambda_{min}(\nabla^2 \frac{1}{2}\gamma^2_S(\bar y)) =\frac{1}{2R^3}\left(R+\gamma D^2-\sqrt{(R+\gamma D^2)^2-4\gamma R^3}\right) , 
    $$
    $$\lambda_{max}(\nabla^2 \frac{1}{2}\gamma^2_S(\bar y)) =\frac{1}{2R^3}\left(R+\gamma D^2+\sqrt{(R+\gamma D^2)^2-4\gamma R^3}\right)
    $$
    where $D=\|\bar y\|$ and $R= \zeta ^T \bar y$.
\end{theorem}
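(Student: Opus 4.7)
The natural extremal candidate is the Euclidean ball itself, since a ball is simultaneously $\gamma$-strongly convex and $\gamma$-smooth at every boundary point, and so equality should hold in the corresponding bounds of Theorems~\ref{thm:gauge-strong-convexity} and~\ref{thm:gauge-smoothness}. I would take $S = B(c, 1/\gamma)\subseteq\mathbb{R}^2$ with $c := \bar y - \zeta/\gamma$, where $\bar y$ and the unit vector $\zeta$ are chosen to satisfy $\|\bar y\| = D$ and $\zeta^T \bar y = R$ (feasible by Cauchy--Schwarz, $R\le D$). This places $\bar y\in\bdry S$ with outer normal $\zeta$ and makes $S$ both $\gamma$-strongly convex and $\gamma$-smooth, globally and hence at $(\bar y,\zeta)$.

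To compute $\nabla^2(\tfrac12\gamma_S^2)(\bar y)$ in closed form, I would use the implicit characterization that $\lambda = \gamma_S(y)$ is the positive root of the quadratic $(\|c\|^2-1/\gamma^2)\lambda^2 - 2(c^Ty)\lambda + \|y\|^2 = 0$, noting the convenient values $\|c\|^2-1/\gamma^2 = D^2 - 2R/\gamma$ and $c^T\bar y = D^2 - R/\gamma$ at the base point. For a unit direction $h$ set $\lambda(t) := \gamma_S(\bar y + th)$; one implicit differentiation recovers $\lambda'(0) = \zeta^Th/R$ (matching the gradient identity in Theorems~\ref{thm:gauge-strong-convexity}--\ref{thm:gauge-smoothness}), and a second implicit differentiation solves for $\lambda''(0)$ in closed form. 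Combining via $\tfrac{d^2}{dt^2}[\tfrac12\lambda(t)^2]_{t=0} = \lambda'(0)^2 + \lambda''(0)$ produces an explicit quadratic form in $h$ equal to $h^T\nabla^2(\tfrac12\gamma_S^2)(\bar y)\,h$.

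In the basis $\{\zeta,\hat e\}$ with $\hat e$ the unit vector in $\mathrm{span}\{\zeta,\bar y\}$ perpendicular to $\zeta$, this quadratic form reduces to a $2\times 2$ matrix whose trace and determinant, after all cancellations using $c = \bar y - \zeta/\gamma$, come out to $(R + \gamma D^2)/R^3$ and $\gamma/R^3$ respectively. Solving the associated characteristic equation then yields the two eigenvalues
\[
\frac{1}{2R^3}\Bigl(R + \gamma D^2 \pm \sqrt{(R + \gamma D^2)^2 - 4\gamma R^3}\,\Bigr),
\]
exactly matching the two formulas in the theorem statement. Non-negativity of the discriminant follows from $R\le D$ together with AM--GM: $(R + \gamma D^2)^2 \ge 4\gamma RD^2 \ge 4\gamma R^3$.

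The main obstacle is purely algebraic bookkeeping in the double implicit differentiation and then in isolating the clean $2\times 2$ block structure; each term must be reduced via $c = \bar y - \zeta/\gamma$ before the cross terms collapse into the trace and determinant above. A minor subtlety is that when $\|c\| \ge 1/\gamma$ the ball itself does not contain $0$ in its interior; in that regime I would replace $S$ by any convex superset of $B(c,1/\gamma)$ that agrees with it in a neighborhood of $\bar y$ on the ray from $0$ and does contain $0$ in its interior (for instance, the convex hull of $B(c,1/\gamma)$ with a small ball around $-\bar y$), which leaves both the gauge and its Hessian at $\bar y$ unchanged.
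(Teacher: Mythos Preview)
Your proposal is correct and takes essentially the same route as the paper: construct $S$ so that it locally coincides with the ball $B(\bar y-\zeta/\gamma,\,1/\gamma)$ near $\bar y$, then read off the Hessian eigenvalues. The paper chooses concrete coordinates $\bar y=(\sqrt{D^2-R^2},-R)$, $\zeta=(0,-1)$, sets $S=\mathrm{conv}(\{0\}\cup B)$, and invokes Lemma~\ref{lemma: hessian formular} to get the $2\times 2$ Hessian matrix directly (and then its eigenvalues), whereas you recompute that Hessian via implicit differentiation of the quadratic $\|y-\lambda c\|^2=\lambda^2/\gamma^2$; both arrive at the same trace $(R+\gamma D^2)/R^3$ and determinant $\gamma/R^3$. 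Your handling of the $0\in\mathrm{int}\,S$ issue is actually a bit more careful than the paper's (its convex hull with $\{0\}$ leaves $0$ on the boundary), and your remark that one needs $R\le D$ for the construction to be feasible is a point the paper leaves implicit.
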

\begin{theorem} \label{thm:gauge-strong-convexity-converse}
    Consider any set $S$ with $0\in \mathrm{int\ } S$ and $y\in\mathcal{E}$ with $\gamma_S(y)\neq 0$. If $\frac{1}{2}\gamma_S^2$ is $\mu$-strongly convex w.r.t.~$(y, g)$, then the set $S$ is strongly convex with parameter $\mu \zeta^T\bar y$ w.r.t.~$(\bar y,\zeta)$, where $\bar y = y/\gamma_S(y)$, $\zeta = g/\|g\|$.
\end{theorem}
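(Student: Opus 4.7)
My plan is to exploit the two-homogeneity of $\gamma_S^2$ to rescale the local strong convexity inequality from a neighborhood of $y$ to one of $\bar y$, and then convert the resulting quadratic lower bound into the outer ball approximation characterizing strong convexity of $S$.

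First, I would record two consequences of $g \in \partial(\frac{1}{2}\gamma_S^2)(y)$ arising from positive homogeneity. Setting $c := \gamma_S(y) > 0$ and applying the subgradient inequality at the test points $z = ty$ while letting $t \to 1$ from both sides yields Euler's identity $g^T y = 2 \cdot \frac{1}{2}\gamma_S^2(y) = c^2$. Since $y = c\bar y$, this gives $g^T \bar y = c$, so $\|g\|\,\zeta^T\bar y = c$ with $\zeta = g/\|g\|$. A parallel argument using the 1-homogeneity of $\gamma_S$ shows $g/c \in \partial \gamma_S(y) = \partial\gamma_S(\bar y)$; since $\gamma_S(\bar y) = 1$ and $\gamma_S(z) \leq 1$ on $S$, any such subgradient satisfies $(g/c)^T(z-\bar y) \leq 0$ for $z \in S$, placing $g/c$ (and hence $\zeta$) in $N_S(\bar y)$, with $\zeta^T\bar y = c/\|g\| > 0$.

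Next, I would fix any $\tilde\mu < \mu$. The $\liminf$ definition of local strong convexity supplies some $\eta_0 > 0$ with
\[
\frac{1}{2}\gamma_S^2(z') \;\geq\; \frac{c^2}{2} + g^T(z'-y) + \frac{\tilde\mu}{2}\|z'-y\|^2 \qquad \text{for all } \|z'-y\| < \eta_0.
\]
For $z$ with $\|z-\bar y\| < \eta_0/c$, substituting $z' = cz$, using $\gamma_S^2(cz) = c^2\gamma_S^2(z)$ and $y = c\bar y$, and dividing through by $c^2$ gives
\[
\frac{1}{2}\gamma_S^2(z) \;\geq\; \frac{1}{2} + \frac{1}{c} g^T(z-\bar y) + \frac{\tilde\mu}{2}\|z-\bar y\|^2.
\]
Restricting further to $z \in S \cap B(\bar y, \eta_0/c)$, the membership condition $\gamma_S(z) \leq 1$ bounds the left side by $\frac{1}{2}$. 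Rearranging, substituting $g = \|g\|\zeta$, and using $\|g\|/c = 1/(\zeta^T\bar y)$ yields
\[
\zeta^T(\bar y - z) \;\geq\; \frac{\tilde\mu\,\zeta^T\bar y}{2}\|z-\bar y\|^2.
\]
Since $\|\zeta\|=1$, expanding $\|z - \bar y + \zeta/\tilde\alpha\|^2$ shows this is equivalent to the outer ball inclusion $z \in B(\bar y - \zeta/\tilde\alpha, 1/\tilde\alpha)$ with $\tilde\alpha = \tilde\mu\,\zeta^T\bar y$, which is precisely the local strong convexity condition of $S$ at $(\bar y, \zeta)$. Letting $\tilde\mu \nearrow \mu$ delivers the claimed parameter $\mu\,\zeta^T\bar y$.

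The main obstacle I anticipate is not the algebra but the identification in the first step: recognizing, via Euler's identity for 2-homogeneous convex functions, that any subgradient $g$ of $\frac{1}{2}\gamma_S^2$ at $y$ is forced to be parallel to a normal vector of $S$ at $\bar y$ with magnitude exactly $\|g\| = c/(\zeta^T\bar y)$. Once this alignment and scaling are secured, the two-homogeneous rescaling and the elementary inequality $\gamma_S(z) \leq 1$ for $z \in S$ combine routinely, with the only bookkeeping being the shrinkage of the effective neighborhood from radius $\eta_0$ around $y$ to radius $\eta_0/c$ around $\bar y$.
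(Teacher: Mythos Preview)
Your proposal is correct and follows essentially the same route as the paper: identify $g$ with a rescaled normal vector of $S$ at $\bar y$, use the $2$-homogeneity of $\gamma_S^2$ to transport the local strong convexity inequality from a neighborhood of $y$ to one of $\bar y$, and then read off the outer ball inclusion from $\gamma_S(z)\le 1$ on $S$. The only cosmetic difference is that you derive the relation $g=\gamma_S(y)\zeta/(\zeta^T\bar y)$ from Euler's identity, whereas the paper simply cites the chain rule together with the known subgradient formula $\partial\gamma_S(y)=\{\zeta/(\zeta^T\bar y)\mid \zeta\in N_S(\bar y)\}$.
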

\begin{theorem}\label{thm:gauge-smoothness-converse}
    Consider any set $S$ with $0\in \mathrm{int\ } S$ and $y\in\mathcal{E}$ with $\gamma_S(y)\neq 0$. If $\frac{1}{2}\gamma_S^2$ is $L$-smooth w.r.t.~$(y, g)$, then the set $S$ is smooth with parameter $L\zeta^T\bar y$
    w.r.t.~$(\bar y,\zeta)$ where $\bar y = y/\gamma_S(y)$, $\zeta=g/\|g\|$.
\end{theorem}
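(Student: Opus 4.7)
The plan is to pull back the local smoothness bound on $\tfrac{1}{2}\gamma_S^2$ at $y$ to a quadratic upper bound centered at $\bar y$ via positive homogeneity of the gauge, then reinterpret this bound as a certificate of the inner ball inclusion defining smoothness of the set $S$ at $\bar y$.

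First I would recall that $\gamma_S$ is positively homogeneous of degree $1$, so $\tfrac{1}{2}\gamma_S^2$ is positively homogeneous of degree $2$, and that $\bar y = y/\gamma_S(y)$ satisfies $\gamma_S(\bar y)=1$, i.e. $\bar y \in \bdry S$. The smoothness hypothesis with constant $\tilde L > L$ gives
$$\tfrac{1}{2}\gamma_S^2(z) \;\leq\; \tfrac{1}{2}\gamma_S^2(y) + g^T(z-y) + \tfrac{\tilde L}{2}\|z-y\|^2$$
for all $z$ in some neighborhood of $y$. Substituting $z = \gamma_S(y)\,w$ with $w$ close to $\bar y$ and dividing by $\gamma_S(y)^2$, I obtain
$$\tfrac{1}{2}\gamma_S^2(w) \;\leq\; \tfrac{1}{2} + \tfrac{g^T(w-\bar y)}{\gamma_S(y)} + \tfrac{\tilde L}{2}\|w-\bar y\|^2.$$
So any quadratic upper bound at $y$ transfers directly into one at $\bar y$ with the same constant up to the rescaling of the linear term.

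Next I would identify the linear term with the unit normal $\zeta = g/\|g\|$. Using the chain rule and positive homogeneity, $g \in \partial(\tfrac{1}{2}\gamma_S^2)(y) = \gamma_S(y)\,\partial\gamma_S(\bar y)$, and any subgradient $\tilde g \in \partial\gamma_S(\bar y)$ satisfies the standard gauge identity $\tilde g^T \bar y = \gamma_S(\bar y) = 1$. Combining these, $g^T \bar y = \gamma_S(y)$, so $\|g\|\,\zeta^T\bar y = \gamma_S(y)$ and $g/\gamma_S(y) = \zeta/(\zeta^T\bar y)$. (Along the way, this confirms $\zeta \in N_S(\bar y)$.) Substituting back,
$$\tfrac{1}{2}\gamma_S^2(w) \;\leq\; \tfrac{1}{2} + \tfrac{\zeta^T(w-\bar y)}{\zeta^T\bar y} + \tfrac{\tilde L}{2}\|w-\bar y\|^2.$$

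Finally I would fix any $\tilde\beta > L\,\zeta^T\bar y$, choose $\tilde L > L$ with $\tilde L\,\zeta^T\bar y < \tilde\beta$ (possible since $\zeta^T\bar y > 0$), and take $w$ in the corresponding neighborhood of $\bar y$ with $w \in B(\bar y - \zeta/\tilde\beta,\,1/\tilde\beta)$. Expanding the ball condition yields $\zeta^T(w - \bar y) \leq -\tfrac{\tilde\beta}{2}\|w-\bar y\|^2$, which plugged into the display above gives
$$\tfrac{1}{2}\gamma_S^2(w) \;\leq\; \tfrac{1}{2} + \tfrac{1}{2}\!\left(\tilde L - \tfrac{\tilde\beta}{\zeta^T\bar y}\right)\!\|w-\bar y\|^2 \;\leq\; \tfrac{1}{2},$$
so $\gamma_S(w) \leq 1$, i.e.\ $w \in S$. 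This is precisely the local inner-ball inclusion defining $\tilde\beta$-smoothness of $S$ at $(\bar y,\zeta)$ for every $\tilde\beta > L\,\zeta^T\bar y$, yielding $L\,\zeta^T\bar y$-smoothness as claimed.

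The main obstacle I anticipate is bookkeeping around localization: the neighborhood of $y$ on which the smoothness bound holds must be translated (via the $\gamma_S(y)$ rescaling) into a neighborhood of $\bar y$ small enough that, for the chosen $\tilde L$ close to $L$, the strict inequality $\tilde L\,\zeta^T\bar y < \tilde\beta$ survives. The gauge identity $g^T \bar y = \gamma_S(y)$ and the positive lower bound $\zeta^T\bar y > 0$ (which uses $0 \in \mathrm{int\ }S$) are the two nontrivial ingredients; everything else is rescaling.
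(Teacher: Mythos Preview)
Your proposal is correct and follows essentially the same approach as the paper's proof (which the paper indicates by writing ``Similar to Theorem~\ref{thm:gauge-strong-convexity-converse}''): both use positive homogeneity to transfer the quadratic bound from $y$ to $\bar y$, identify $g/\gamma_S(y)=\zeta/(\zeta^T\bar y)$ via the gauge subgradient formula, and then read off the inner-ball inclusion from the resulting inequality $\tfrac{1}{2}\gamma_S^2(w)\le \tfrac{1}{2}+\tfrac{\zeta^T(w-\bar y)}{\zeta^T\bar y}+\tfrac{\tilde L}{2}\|w-\bar y\|^2$. The only cosmetic difference is that the paper completes the square to exhibit the sublevel set $\{w:\text{RHS}\le \tfrac12\}$ as the ball $B(\bar y-\zeta/(\tilde L\zeta^T\bar y),\,1/(\tilde L\zeta^T\bar y))$ directly, whereas you verify membership pointwise by expanding the ball condition; these are equivalent.
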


\subsection{Proofs of Theorems Characterizing Structured Gauges} \label{subsec:proofs}
Both strongly convex sets and smooth sets are defined in terms of balls built by a boundary point and corresponding normal vector. This perspective is critical for the proofs of our main theorems. The following lemmas characterize the gauges of these approximating balls. Their proofs are deferred to Appendix~\ref{pf: hessian formular} and~\ref{pf: hessian eigenvalues}. Note that below, we consider sets $B(\bar y-r\zeta, r)$, which may not contain the origin. As a result, it is important that we defined the gauge as $\inf\{\lambda>0 \mid x \in \lambda S\}$, which may no longer equal $\sup\{\lambda>0 \mid x \not\in \lambda S\}$. Note when $0 \notin S$, $\gamma_S(0)=\infty$.

\begin{lemma}\label{lemma: hessian formular}
    Consider any closed convex set $S$ with $0\in \mathrm{int\ } S$ and $y\in \mathcal{E}$ with $\gamma_S(y)\neq 0$. For the boundary point $\bar y=y/\gamma_S(y)$ and any unit normal vector $\zeta \in N_S(\bar y)$, the ball $B=B(\bar y-r\zeta, r)$ with radius $r>0$ has
    \begin{enumerate}[label=(\alph*)]
        \item $\gamma_S(y)=\gamma_B(y)$,
        \item $\nabla \frac{1}{2}\gamma_B^2(y) \in \partial \frac{1}{2}\gamma_S^2(y)$,
        \item $\nabla^2 \frac{1}{2}\gamma_B^2(y) = \frac{1}{\left(\bar \zeta^T \bar{y}\right)^3}\left(\left(\bar \zeta^T \bar{y}+\|\bar{y}\|^2\right)\bar\zeta\bar\zeta^T-\bar \zeta^T \bar{y}(\bar\zeta\bar{y}^T+\bar{y}\bar\zeta^T)+\left(\bar \zeta^T \bar{y}\right)^2I \right)$
    \end{enumerate}
    where $\bar \zeta =r\zeta$.
\end{lemma}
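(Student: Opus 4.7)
My plan is to exploit the fact that $B=B(\bar y-r\zeta,r)$ touches the supporting hyperplane $\{x:\zeta^T x=\zeta^T\bar y\}$ of $S$ tangentially at $\bar y$ from inside the half-space $\{x:\zeta^T x\leq\zeta^T\bar y\}$. The standing assumption $0\in\mathrm{int\ }S$ together with $\zeta\in N_S(\bar y)$ forces $\zeta^T\bar y>0$ (the normal-cone inequality is strict at an interior point).

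For part (a), I parametrize the ray $\{t\bar y:t>0\}$ and observe $\zeta^T(t\bar y)=t\zeta^T\bar y$. For every $t>1$ this exceeds $\zeta^T\bar y$, so $t\bar y$ lies outside the supporting half-space and hence outside $B$. Meanwhile $\|\bar y-(\bar y-r\zeta)\|=r$ puts $\bar y\in\partial B$. So the ray leaves $B$ exactly at $\bar y$, identifying the same scaling of $y$ that realizes $\gamma_S(y)$, and thereby $\gamma_B(y)=\gamma_S(y)$.

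For part (b), I use two ingredients. First, the standard subgradient calculus for sublinear gauges gives $\partial\gamma_S(y)=\{\eta\in N_S(\bar y):\eta^T\bar y=1\}$ whenever $\gamma_S(y)>0$, so $\zeta/(\zeta^T\bar y)\in\partial\gamma_S(y)$; the chain rule then places $\gamma_S(y)\cdot\zeta/(\zeta^T\bar y)$ in $\partial(\tfrac12\gamma_S^2)(y)$. Second, running the same derivation on $B$ (whose unique outward unit normal at $\bar y$ is $\zeta=(\bar y-c)/r$) yields $\nabla(\tfrac12\gamma_B^2)(y)=\gamma_B(y)\cdot\zeta/(\zeta^T\bar y)$. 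Part (a) identifies these two vectors, proving the inclusion.

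Part (c) is the computational heart. Setting $\phi:=\gamma_B(y)$, the defining relation $\|y/\phi-c\|^2=r^2$ rearranges to
\[d\phi^2+2\phi c^T y-\|y\|^2=0,\qquad d:=r^2-\|c\|^2.\]
Implicit differentiation in $y$ yields $\nabla\phi=(y-\phi c)/w$, where $w:=d\phi+c^T y$. Substituting $c=\bar y-r\zeta$ and $\|c\|^2=\|\bar y\|^2-2r\zeta^T\bar y+r^2$ collapses $w$ to $\phi r(\zeta^T\bar y)$ and recovers $\nabla\phi(y)=\zeta/(\zeta^T\bar y)$. Differentiating once more, using $\nabla w=d\nabla\phi+c$ and $y-\phi c=w\nabla\phi$, gives
\[\nabla^2\phi(y)=\frac{1}{w}\bigl(I-c\nabla\phi^T-\nabla\phi\,c^T-d\,\nabla\phi\nabla\phi^T\bigr),\]
after which $\nabla^2(\tfrac12\phi^2)=\nabla\phi\nabla\phi^T+\phi\nabla^2\phi$ assembles the Hessian. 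The main obstacle is algebraic bookkeeping: plugging $c=\bar y-r\zeta$, $d=2r\zeta^T\bar y-\|\bar y\|^2$, and $w=\phi r\zeta^T\bar y$ back in, many cross-terms involving $r\zeta\zeta^T$ cancel, and grouping the remaining coefficients of $\zeta\zeta^T$, $\bar y\zeta^T+\zeta\bar y^T$, and $I$—then rewriting with $\bar\zeta=r\zeta$ to absorb the stray factors of $r$—yields the claimed closed form.
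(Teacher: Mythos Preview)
Your proof is correct and takes a genuinely different route from the paper. For part (a), the paper computes $\gamma_B(z)$ explicitly by solving the quadratic $(2\bar\zeta^T\bar y-\|\bar y\|^2)\lambda^2+2\lambda(\bar y-\bar\zeta)^Tz-\|z\|^2\geq 0$ and then plugs in $z=y$, splitting into two cases according to whether the leading coefficient $2\bar\zeta^T\bar y-\|\bar y\|^2$ vanishes (i.e., whether $0\in\partial B$); you instead give a clean geometric argument via the supporting half-space. For part (c), the paper differentiates the explicit closed-form expression for $\gamma_B(z)$ twice in each case separately and then simplifies at $z=y$; you use implicit differentiation of the defining relation $d\phi^2+2\phi c^Ty-\|y\|^2=0$, which handles both cases at once since the implicit-function condition $\partial F/\partial\phi=2w=2\phi r(\zeta^T\bar y)>0$ is insensitive to the sign of $d$. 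Your route is more economical and avoids the case split entirely; the paper's route has the mild side benefit of producing an explicit formula for $\gamma_B(z)$ at every $z$, though this is not needed for the lemma. One small point worth making explicit in your write-up is that the implicit function theorem is what justifies the smoothness of $\gamma_B$ near $y$ and hence legitimizes differentiating the quadratic relation---this is immediate from $w>0$, but should be said.
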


\begin{lemma}\label{lemma: hessian eigenvalues}
    Consider any ball $B=B(\bar y-r\zeta, r)$ and $y$ with $\bar y = y/\gamma_B(y)$ and $\zeta^T\bar y > 0$, the eigenvalues of the Hessian matrix $\nabla ^2 \frac{1}{2}\gamma_B^2(y)$, $\lambda_1 \leq \lambda_2 \leq \cdots \leq \lambda_n$, are given by:
    \begin{align*}
    \lambda_1 =& \frac{1}{2( \zeta^T \bar y)^3}\left( \zeta^T \bar y+\frac{\|\bar y\|^2}{r}-\sqrt{\left( \zeta ^T\bar y+\frac{\|\bar y\|^2}{r}\right)^2-\frac{4( \zeta^T \bar y)^3}{r}}\right) \geq \frac{\frac{1}{r}}{\zeta^T \bar y + \frac{\|\bar y\|^2}{r}} \ ,  \\
    \lambda_i =& \frac{1}{r\zeta^T \bar y}, \qquad \text{for } i=2,...,n-1 \ , \\
    \lambda_n =& \frac{1}{2( \zeta^T \bar y)^3}\left( \zeta^T \bar y+\frac{\|\bar y\|^2}{r}+\sqrt{\left( \zeta ^T\bar y+\frac{\|\bar y\|^2}{r}\right)^2-\frac{4( \zeta^T \bar y)^3}{r}}\right) \leq \frac{\zeta ^T\bar y+\frac{\|\bar y\|^2}{r}}{(\zeta^T \bar y)^3}\ .
    \end{align*}
\end{lemma}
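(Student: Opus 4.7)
The plan is to start from the explicit Hessian formula provided by Lemma~\ref{lemma: hessian formular}(c), and observe that after substituting $\bar\zeta = r\zeta$ and using $\bar\zeta^T\bar y = r\zeta^T\bar y$, the only term in the expression proportional to $I$ is $(\bar\zeta^T\bar y)^2 I/(\bar\zeta^T\bar y)^3 = I/(r\zeta^T\bar y)$, while every remaining term is a linear combination of the dyads $\zeta\zeta^T$, $\zeta\bar y^T$, and $\bar y \zeta^T$. Thus
$$
\nabla^2 \tfrac{1}{2}\gamma_B^2(y) = \tfrac{1}{r\zeta^T\bar y}\, I + M_0, \qquad \mathrm{range}(M_0)\subseteq \mathrm{span}\{\zeta,\bar y\}.
$$
For any $v\in\mathrm{span}\{\zeta,\bar y\}^\perp$ we have $\zeta^Tv = \bar y^Tv = 0$, so $M_0 v = 0$ and $v$ is an eigenvector with eigenvalue $1/(r\zeta^T\bar y)$. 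This accounts for $n-2$ of the eigenvalues (or $n-1$ in the degenerate case $\bar y \parallel \zeta$), matching $\lambda_i$ for $i=2,\ldots,n-1$.

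To obtain the two remaining eigenvalues I would restrict to $V=\mathrm{span}\{\zeta,\bar y\}$ using the orthonormal basis $\{e_1=\zeta,\, e_2\}$ chosen so that $\bar y = a e_1 + b e_2$, where $a=\zeta^T\bar y>0$ and $b=\sqrt{\|\bar y\|^2-a^2}\geq 0$. Expanding $\zeta\zeta^T$, $\zeta\bar y^T+\bar y\zeta^T$, and $I$ in this basis and plugging into Lemma~\ref{lemma: hessian formular}(c) gives the $2\times 2$ block
$$
\tfrac{1}{ra^3}\begin{pmatrix} ra+b^2 & -ab \\ -ab & a^2 \end{pmatrix},
$$
whose trace equals $(ra+\|\bar y\|^2)/(ra^3)$ and whose determinant simplifies (since $(ra+b^2)a^2 - a^2b^2 = ra^3$) to $1/(ra^3)$. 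The quadratic formula on $\lambda^2 - \mathrm{tr}\cdot \lambda + \det = 0$ then delivers the two claimed expressions for $\lambda_1$ and $\lambda_n$ verbatim. The edge case $b=0$ makes the block diagonal with entries $1/a^2$ and $1/(ra)$, which both stated formulas continue to reproduce as a degenerate limit.

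Lastly I would check the two scalar bounds and the sorting. Non-negativity of the discriminant, and hence reality of the eigenvalues, follows from the identity
$$
(ra+\|\bar y\|^2)^2 - 4ra^3 = (ra-\|\bar y\|^2)^2 + 4rab^2 \geq 0.
$$
Writing $A=\zeta^T\bar y+\|\bar y\|^2/r$ and $B=4(\zeta^T\bar y)^3/r$, the upper bound $\lambda_n\leq A/(\zeta^T\bar y)^3$ is immediate from $\sqrt{A^2-B}\leq A$, and for $\lambda_1$ one rationalizes
$$
\lambda_1 = \frac{B}{2(\zeta^T\bar y)^3\bigl(A+\sqrt{A^2-B}\bigr)} \geq \frac{B}{4(\zeta^T\bar y)^3 A} = \frac{1/r}{\zeta^T\bar y+\|\bar y\|^2/r}.
$$
The ordering $\lambda_1\leq 1/(r\zeta^T\bar y)\leq \lambda_n$ that validates calling them the smallest and largest eigenvalues follows by combining the product identity $\lambda_1\lambda_n = 1/(r(\zeta^T\bar y)^3)$ with the Cauchy--Schwarz bound $\|\bar y\|^2\geq (\zeta^T\bar y)^2$, which yields $\lambda_n\geq 1/(\zeta^T\bar y)^2$ and hence $\lambda_1\leq 1/(r\zeta^T\bar y)$.

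The computation is largely mechanical, so the main obstacle is the algebraic bookkeeping in the $2\times 2$ restriction — in particular, making sure the cross terms $-ab$ in the off-diagonal arise correctly from the $-\bar\zeta^T\bar y(\bar\zeta\bar y^T+\bar y\bar\zeta^T)$ term, and that the determinant collapses cleanly so that the coefficient of $\lambda^0$ in the characteristic polynomial is exactly $1/(ra^3)$ (which is what makes the two bounds and the orderings line up so neatly).
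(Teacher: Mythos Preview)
Your proposal is correct and follows the same underlying strategy as the paper: peel off the scalar multiple of $I$ to isolate the $n-2$ repeated eigenvalues, then solve a two-dimensional problem on $\mathrm{span}\{\zeta,\bar y\}$. The paper packages the 2D step as a separate auxiliary lemma on matrices of the form $C_1 aa^T+C_2(ab^T+ba^T)+C_3bb^T$ (solving $Mv=\lambda v$ with $v=ra+sb$), whereas you pass to an orthonormal basis and read off trace and determinant; your version additionally supplies the scalar bounds on $\lambda_1$ and $\lambda_n$, which the paper states but does not argue in its proof.
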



\subsubsection{Proof of Theorem~\ref{thm:gauge-strong-convexity}} \label{pf:gauge-strong-convexity}
Consider any $y$ with $\gamma_S(y) \neq 0$, and boundary point $\bar y = y/\gamma_S(y)$ with unit normal vector $\zeta \in N_S(\bar y)$. Denote the (locally) outer approximating ball $B=B(\bar y-\zeta/\alpha, 1/\alpha).$ Applying the subgradient chain rule, we compute the gradient of this ball's gauge squared $g$ as
\begin{equation} \label{eq:subgrad_gauge_square_ball}
 \partial \left(\frac{1}{2}\gamma^2_B\right)(y)=\gamma_B(y) \partial \gamma_B(y) = \left\{\frac{\gamma_B(y)\zeta}{\zeta^T\bar y}\right\} \ .
\end{equation}
By Lemma~\ref{lemma: hessian formular} (c) and Lemma~\ref{lemma: hessian eigenvalues}, this half gauge squared is locally strongly convex at $\bar y$ with constant 
$$\mu = \frac{1}{2( \zeta^T \bar y)^3}\left( \zeta^T \bar y+\alpha\|\bar y\|^2-\sqrt{\left( \zeta ^T\bar y+\alpha\|\bar y\|^2\right)^2-4\alpha( \zeta^T \bar y)^3}\right) \ .
$$
For any $\tilde \mu < \mu$, consider a neighborhood such that all $z\in B(y,\eta)$ have
$$\frac{1}{2}\gamma_B^2(z) \geq \frac{1}{2}\gamma_B^2(y) + g^T(z-y) +\frac{\Tilde{\mu}}{2}\|z-y\|^2 \ . $$
Since $S$ is strongly convex w.r.t.~$(\bar y, \zeta)$, then $ S \cap B(\bar y, \bar \eta) \subseteq B\left(\bar y - \frac{1}{\alpha}\zeta, \frac{1}{\alpha}\right) \cap B(\bar y, \bar \eta)$ for some $\bar \eta$. Hence for $z$ with $\|z-y\|\leq \gamma_S(y)\frac{R(S)}{D(S)+R(S)}\bar \eta$, the boundary point $\bar z =z/\gamma_S(z)$ has 
\begin{align*}
\|\bar z -\bar y\|&=\|\frac{z}{\gamma_S(z)}-\frac{y}{\gamma_S(y)}\| \\
&\leq \frac{\|z\||\gamma_S(y)-\gamma_S(z)|}{\gamma_S(z)\gamma_S(y)}+\frac{\|z-y\|}{\gamma_S(y)} \\
&\leq \frac{D(S)\|z-y\|}{R(S)\gamma_S(y)}+\frac{\|z-y\|}{\gamma_S(y)} \\
&\leq \bar \eta \ ,
\end{align*}
where the second inequality is given by $\gamma_S$ being $1/R(S)$-Lipschitz and $\|\bar z\|\leq D(S)$. Then $\bar z \in S \cap B(\bar y, \bar \eta)$ implies $\bar z \in B\left(\bar y - \frac{1}{\alpha}\zeta, \frac{1}{\alpha}\right)$, it follows $\gamma_S(z) \geq \gamma_B(z)$.
Applying this bound and Lemma~\ref{lemma: hessian formular} (a) and (b), we conclude for any $\tilde \mu <\mu$ in some neighborhood of $y$, all $z$ have
\begin{equation} \label{equ:lower_bound}
\frac{1}{2}\gamma_S^2(z) \geq \frac{1}{2}(\gamma^B)^2(z) \geq \frac{1}{2}\gamma_S^2(y) + g^T(z-y) +\frac{\tilde \mu}{2}\|z-y\|^2 \ .
\end{equation}
That is, $\frac{1}{2}\gamma^2_S$ is locally $\mu$-strongly convex w.r.t.~$(y,g)$.

If $\gamma_S(y)=0$ and $y=0$, then $\partial (\frac{1}{2}\gamma_S^2)(y) = \{0\}$. For any point $z$, Lemma~\ref{lem:gauge_bound} ensures $\frac{1}{2}\gamma_S^2(z)\geq \frac{1}{2}\frac{\|z\|^2}{D^2(S)}$. Thus $\frac{1}{2}\gamma_S^2$ is $\frac{1}{D^2(S)}$-strongly convex w.r.t.~$(0,0)$.

If $\gamma_S(y) = 0$ and $y\neq 0$, then $\gamma_S(ty)=t\gamma_S(y)=0$ for any $t>0$. Thus $\frac{1}{2}\gamma_S^2(y)$ is constant along this ray and consequently cannot be strongly convex at $y$.

\subsubsection{Proof of Theorem~\ref{thm:gauge-smoothness}} \label{pf:gauge-smoothness}
Following the same technique as in Theorem~\ref{thm:gauge-strong-convexity}, but utilizing the maximum eigenvalue bound of Lemma~\ref{lemma: hessian eigenvalues} instead of minimum, ensures for $\gamma_S(y) \neq 0$ the function $\frac{1}{2}\gamma_S^2(y)$ is smooth with the parameter
\begin{align*}
L &= \frac{1}{2( \zeta^T \bar y)^3}\left( \zeta^T \bar y+\beta\|\bar y\|^2+\sqrt{\left( \zeta ^T\bar y+\beta\|\bar y\|^2\right)^2-4\beta( \zeta^T \bar y)^3}\right).
\end{align*}

If $\gamma_S(y)=0$ and $y=0$, then $\partial(\frac{1}{2}\gamma_S^2)(y) = \{0\}$. For any point $z$, Lemma~\ref{lem:gauge_bound} ensures $\frac{1}{2}\gamma_S^2(z) \leq \frac{1}{2}\frac{\|z\|^2}{R^2(S)}$. Thus $\frac{1}{2}\gamma_S^2$ is $\frac{1}{R^2(S)}$-smooth w.r.t.~$(0,0)$.

If $\gamma_S(y)=0$ and $y \neq 0$, the set $S$ is unbounded in the direction of $y$ and we still have $\partial (\frac{1}{2}\gamma_S^2)(y) = \{0\}$. Since $B(0,R(S)) \subseteq S$, by convexity, we have $B(ty,R(S)) \subseteq S$ for any $t>0$, thus $T=\cup_{t\geq0} B(ty, R(S)) \subseteq S$. Since $T$ is unbounded in the direction of $y$, and bounded in the direction of $-y$, for $\bar z=z/\gamma_T(z)$, we have the following two cases. If $y^T z \geq 0$, then $\gamma_T(z) =\frac{\|z\|}{\|\bar z\|}=\frac{1}{R(S)}\|z-\frac{y^Tz}{\|y\|^2}y\|$. If $y^Tz  <0$, then $\gamma_T(z)=\gamma_{B(0,R(S))}(z)=\frac{\|z\|}{R(S)}$. It follows that
\begin{equation*}
\frac{1}{2}\gamma_S^2(z) \leq \frac{1}{2}\gamma_T^2(z) = \begin{cases} \frac{1}{2} \frac{1}{R(S)^2}\|z-\frac{y^Tz}{\|y\|^2}y\|^2 & \text{if } y^Tz \geq 0 \\ \frac{1}{2}\frac{\|z\|^2}{R(S)^2} & \text{if } y^Tz  <0.
\end{cases}
\end{equation*}
Note if $y^Tz \geq 0$, $\|z-\frac{y^Tz}{\|y\|^2}y\|^2-\|z-y\|^2 =-\left(\|y\|-\frac{y^Tz}{\|y\|}\right)^2 \leq 0$, and if $y^Tz <0$, $\|z\|^2-\|z-y\|^2 = 2y^Tz-\|y\|^2 \leq 0 $. It follows that $\frac{1}{2}\gamma^2_T(z) \leq \frac{1}{2}\frac{\|z-y\|^2}{R(S)^2}$. Thus, $\frac{1}{2}\gamma_S^2$ is $\frac{1}{R(S)^2}$-smooth w.r.t.~$(y,0)$.

\subsubsection{Proof of Theorem~\ref{thm:hessian-blow}} \label{proof:hessian-blow}
Let $S$ be the convex hull of $B(0,\frac{\zeta^T\bar y}{2}) \cup B(c, 1/\gamma)$. Consider $\bar y =(\sqrt{D^2-R^2}, -R) \in \bdry S$ and $\zeta =(0,-1) \in N_S(\bar y)$, then $c=\bar y - \zeta/\gamma $. Note that since $B(0,\frac{\zeta^T\bar y}{2})$ has radius $\frac{\zeta^T \bar y}{2}<\zeta ^T\bar y$, one has $(0,-1) \in N_S(\bar y)$. By Lemma~\ref{lemma: hessian formular}, we get 
$$ \frac{1}{2}\nabla^2 \gamma _S^2( y) = \frac{1}{R^3}\begin{bmatrix} \gamma R^2 & \gamma R\sqrt{D^2-R^2} \\ \gamma R\sqrt{D^2-R^2} & R+\gamma (D^2-R^2) \end{bmatrix}. 
$$
This positive definite matrix has 
$$\lambda_{min}(\nabla^2 \frac{1}{2}\gamma^2_S(y)) =\frac{1}{2R^3}\left(R+\gamma D^2-\sqrt{(R+\gamma D^2)^2-4\gamma R^3})\right)
$$
and
$$\lambda_{max}(\nabla^2 \frac{1}{2}\gamma^2_S(y)) =\frac{1}{2R^3}\left(R+\gamma D^2+\sqrt{(R+\gamma D^2)^2-4\gamma R^3})\right)
$$
where $\zeta \in N_S(\bar y)$ and $\|\zeta\| =1$.

\subsubsection{Proof of Theorem~\ref{thm:gauge-strong-convexity-converse}} \label{pf:gauge-strong-convexity-converse}
Note that subgradients of the half gauge squared are given by
\begin{equation} \label{eq:subgrad_gauge_square}
 \partial \left(\frac{1}{2}\gamma^2_S\right)(y)=\gamma_S(y) \partial \gamma_S(y) = \left\{\frac{\gamma_S(y)\zeta}{\zeta^T\bar y} \mid \zeta\in N_S(\bar y)\right\}
\end{equation}
by the chain rule of subgradient calculus and the formula for subgradients of a gauge. Hence $\zeta=g/\|g\|\in N_S(\bar y)$ and $g = \gamma_S(y)\zeta/\zeta^T\bar y$. For any $\tilde \mu < \mu$, consider a neighborhood such that all $z\in B(y,\eta)$ have
$$\frac{1}{2}\gamma^2_S(z) \geq \frac{1}{2}\gamma^2_S(y) + \left(\frac{\gamma_S(y)\zeta}{\zeta^T\bar y}\right)^T(z-y) + \frac{\tilde\mu}{2}\|z-y\|^2 \ . $$
Dividing through by $\gamma^2_S(y)$, all $z\in B(\bar y,\bar \eta)$ with $\bar\eta = \eta/\gamma_S(y)$ have
$$\frac{1}{2}\gamma^2_S(z) \geq \frac{1}{2}\gamma^2_S(\bar y) + \left(\frac{\zeta}{\zeta^T\bar y}\right)^T(z-\bar y) + \frac{\tilde\mu}{2}\|z-\bar y\|^2 \ . $$
Expressing the set $S = \{z \mid \gamma_S(z) \leq 1\}$, we arrive at the local strong convexity containment of
\begin{align*}
    S \cap B(\bar y, \bar \eta) &=\left\{z \mid \frac{1}{2}\gamma^2_S(z) \leq \frac{1}{2}\right\} \cap B(\bar y, \bar \eta) \\
    &\subseteq \left\{z \mid \frac{1}{2}\gamma^2_S(\bar y)+ \left(\frac{\zeta}{\zeta^T\bar y}\right)^T(z- \bar y)+\frac{\mu}{2}\|z- \bar y\|^2 \leq \frac{1}{2}\right\} \cap B(\bar y, \bar \eta) \\
    &=\left\{z \mid \left\|z-\bar y +\frac{\zeta}{\mu\zeta^T\bar y}\right\|^2 \leq \frac{1}{\mu^2(\zeta^T\bar y)^2} \right\} \cap B(\bar y, \bar \eta) \ .
\end{align*}

\subsubsection{Proof of Theorem~\ref{thm:gauge-smoothness-converse}} 
Similar to Theorem~\ref{thm:gauge-strong-convexity-converse}. \label{pf:gauge-smoothness-converse}

    \section{Accelerated Feasibility and Optimization Methods} \label{sec:algs}
Our theory on the structure of gauges enables the design and analysis of accelerated methods for both feasibility problems~\eqref{eq:Feas-Problem} and constrained optimization problems~\eqref{eq:OPT-Problem}. To provide a notation capturing both settings via~\eqref{eq:Feas-Gauge-Problem} and~\eqref{eq:OPT-Gauge-Problem}, consider a generic minimization problem of the form
\begin{equation} \label{eq:finite-minmax}
    p_* := \min_y \max_{i=1\dots n}\{h_i(y)\} >0
\end{equation}
for $n$ closed convex nonnegative functions $h_1,\dots, h_n$. In both settings, each $h_i$ is $M$-Lipschitz and $\frac{1}{2}h_i^2$ is $L$-smooth and/or $\mu$-strongly convex, depending on the structure of $f$ and $S_i$:

Consider a feasibility problem~\eqref{eq:Feas-Problem} given by any compact convex sets $S_1,\dots S_m$ with $e_i\in \mathrm{int\ } S_i$ and $\cap S_i$ nonempty. Supposing each $S_i$ is $\beta_i$-smooth and $\alpha_i$-strongly convex (allowing $\beta_i=\infty$ and $\alpha_i=0$), the problem~\eqref{eq:Feas-Gauge-Problem} takes the form~\eqref{eq:finite-minmax} with
\begin{equation} \label{eq:feas-constants}
    \begin{cases} 
        h_i(y) = \gamma_{S_{i},e_i}(y) \mathrm{\ for\ }i=1\dots m\\
        M = 1/\min\{R(S_i - e_i)\}\\
        \mu = \min\{\frac{\alpha_i}{D(S_i - e_i)+\alpha_i D(S_i - e_i)^2}\}\\
        L = \max\{\frac{R(S_i - e_i)+\beta_i D(S_i - e_i)^2}{R(S_i - e_i)^3}\}
        \end{cases}
\end{equation}
where the Lipschitz constant $M$ follows from Lemma~\ref{lem:gauge_bound} and smoothness and strong convexity constants $L$ and $\mu$ for $\frac{1}{2}h_i^2$ are exactly our main results in Corollaries~\ref{cor:gauge-strong-convexity-global} and~\ref{cor:gauge-smoothness-global}. Note the boundedness assumed of each $S_i$ here is not necessary for smoothness of $\gamma_{S_i}^2$ but rather a convenient sufficient condition enabling use of our corollaries.

Consider an optimization problem~\eqref{eq:OPT-Problem} given by compact convex sets $S_1,\dots S_m$ and continuous concave function $f$ with bounded level set $S_f = \{x \mid f(x)>0\}$. Assume $0\in \mathrm{int\ } (\cap_{i} S_i)\cap S_f$ and let $G$ be the local Lipschitz constant of $f$ on $S_f$. Observing $f^{\Gamma}(y)=f_{\Gamma}(y)$~\cite[Proposition 11]{grimmer2023radial1} and 
$f_{\Gamma}(y) =\inf\{v>0 \mid f(y/v)>1/v\}= \inf\{v>0 \mid \frac{(y,1)}{v}\in \mathrm{hypo\ }f\}=\gamma_{\mathrm{hypo\ }f}((y,1)),
$
where $\mathrm{\hypo\ }f = \{(x,u) \mid 0 < u\leq f(x)\}$, the radial dual problem~\eqref{eq:OPT-Gauge-Problem} is also minimizing a finite maximum of $n=m+1$ gauges $\max\{\gamma_{\mathrm{hypo\ }f}(y,1),\gamma_{S_i}(y)\}$. Supposing each $S_i$ is $\beta_i$-smooth and $\alpha_i$-strongly convex and $f$ is $L_f$-smooth and $\mu_f$-strongly convex on $S_f$\footnote{The fact that radial methods only depend on the smoothness and strong convexity of $f$ on a level set rather than globally is convenient benefit previously noted in~\cite{grimmer2023radial2}.},~\eqref{eq:OPT-Gauge-Problem} takes the form~\eqref{eq:finite-minmax} with
\begin{equation} \label{eq:OPT-constants}
    \begin{cases} 
        h_i(y) = \gamma_{S_{i},0}(y) \mathrm{\ for\ }i=1\dots m\\
        h_{m+1}(y) = f^\Gamma(y)\\
        M = 1/\min\{R(S_i), R(S_f)\}\\
        \mu = \min\{\frac{\alpha_i}{D(S_i)+\alpha_i D(S_i)^2},\ \frac{\mu_f}{(1+G^2)^{3/2}D(\mathrm{hypo\ }f)+\mu_f D(\mathrm{hypo\ }f)^2}\}\\
        L = \max\{\frac{R(S_i)+\beta_i D(S_i)^2}{R(S_i)^3},\ \frac{R(\mathrm{hypo\ }f)+L_f D(\mathrm{hypo\ }f)^2}{R(\mathrm{hypo\ }f)^3}\}
        \end{cases}
\end{equation}
utilizing the same constant bounds for the gauge of each $S_i$ as in~\eqref{eq:feas-constants} and where the Lipschitz constant for $f^\Gamma$ follows from~\cite[Proposition 1]{grimmer2023radial2} and its strong convexity and smoothness constants follow from our Theorems~\ref{thm:gauge-strong-convexity} and~\ref{thm:gauge-smoothness} since Lemma~\ref{lemma:epigraph} ensures local $|\delta| L_f$-smoothness and $|\delta|^3\mu_f$-strong convexity of $\mathrm{hypo\ } f$ with $1/\sqrt{1+G^2}\leq |\delta| \leq 1$.

One can design optimization methods for~\eqref{eq:finite-minmax}, taking advantage of these basic structures.
To the best of our knowledge, methods and theory for this exact setting have not been previously derived. Prior works have considered settings of minimizing a finite maximum of Lipschitz smooth functions~\cite{Nesterov2005,Beck2012,Nemirovski2004,dima2019}. A maximum of Lipschitz strongly convex functions is itself Lipschitz and strongly convex, so guarantees like~\cite{lacoste2012simpler} apply. Below, we present three standard algorithms generalized to minimize a maximum of Lipschitz functions that, when squared, are smooth and/or strongly convex. Appendix~\ref{app:convergence-proofs} provides convergence theorems, summarized in Table~\ref{tab:rates}.

\noindent {\bf Methods for Finite Maximum Minimization.}
First, we consider applying the subgradient method to the half objective squared in~\eqref{eq:finite-minmax}. Letting $h(y) = \max_{i=1\dots m}\{h_i(y)\}$, this is defined with stepsizes $s_k$ as
\begin{equation} \label{eq:subgrad-method}
    y_{k+1} = y_k - s_k h(y_k)g_k, \quad g_k\in\partial h(y_k) \ . 
\end{equation}
Note utilizing the squared objective enables the method to benefit from any strong convexity present but requires additional care in the analysis as the square cannot be uniformly Lipschitz. In terms of computational cost, each iteration requires computing the function value of each $h_i$ and a subgradient of one $h_i$ attaining the maximum.

Second, as a direct improvement on the subgradient step, we consider the level projection step, which linearizes each component of the half objective squared and then projects onto the $\frac{1}{2}\bar h^2$-level set
\begin{equation}
    \mathtt{level\mbox{-}proj}(y,\bar h) := \argmin_{z}\left\{ \|z-y\|^2_2 \mid \frac{1}{2}h^2_i(y) + h_i(y)g_{i}^T(z-y) \leq \frac{1}{2}\bar h^2, \forall i=1, \dots ,n \right\} \ .  \label{eq:level-proj-step}
\end{equation}
Iterating this with a target accuracy $\bar h$ gives the level-projection method, defined as
\begin{equation} \label{eq:level-proj-method}
    y_{k+1} = \mathtt{level\mbox{-}proj}(y_k,\bar h) \ .
\end{equation}
This iteration is especially nice for feasibility problems as $\bar h=1$ corresponds to seeking a feasible solution. When $n=1$, this iteration is exactly the subgradient method with the Polyak stepsize rule. For $n=2$, this can be done in closed form while in general, computing the projection corresponds to a dimension $n$ quadratic program.  

Finally, to benefit from any smoothness in the squared objective, we consider the generalized gradient step at $y$ with $g_i\in\partial h_i(y)$ given by
\begin{equation}
    \mathtt{gen\mbox{-}grad}(y,s) :=\argmin_{z}\left\{\max_{i=1, \dots ,n}\left\{\frac{1}{2}h^2_i(y) + h_i(y)g_i^T(z-y)\right\} + \frac{1}{2s}\|z-y\|^2\right\}. \label{eq:general-grad-step}
\end{equation}
Note when $n=1$, this corresponds to a subgradient step. For $n=2$, this can be solved in closed-form. In general, computing this step corresponds to a dimension $n$ quadratic program (matching the complexity of the level-projection method). Iterating this with stepsizes $s_k$ gives the generalized gradient method, defined as $y_{k+1} = \mathtt{gen\mbox{-}grad}(y_k,s_k)$.
A direct improvement is the accelerated generalized gradient method for $L$-smooth optimization, defined as
\begin{equation} \label{eq:accel-method}
    \begin{cases}
    z_{k+1} = \mathtt{gen\mbox{-}grad}(y_k,1/L) \\
    t_{k+1}^2= (1-t_k)t_k^2+\frac{\mu}{L}t_{k+1} \\
    \beta_k = \frac{t_k(1-t_k)}{t^2_k+t_{k+1}} \\
    y_{k+1}=z_{k+1}+\beta_k(z_{k+1}-z_k) \ . 
    \end{cases}
\end{equation}

\begin{table}
	{\small
		\centering
			\renewcommand{\arraystretch}{2}
			\begin{tabular}{|c|c|c|c|c|}\hline
				Algorithm & Generic $\frac{1}{2}h^2_i$ & $\mu$-SC $\frac{1}{2}h^2_i$ & $L$-Smooth $\frac{1}{2}h^2_i$ & Both\\ \hline
				Subgradient Mthd & $O\left(\dfrac{MD}{\sqrt{T+1}}\right)$ & $O\left(\dfrac{M^2p_*}{\mu(T+1)}\right)$ & $O\left(\dfrac{MD}{\sqrt{T+1}}\right)$ & $O\left(\dfrac{M^2p_*}{\mu(T+1)}\right)$ \\
				Level-Proj Mthd & $O\left(\dfrac{MD}{\sqrt{T+1}}\right)$ & $O\left(\dfrac{M^2p_*}{\mu(T+1)}\right)$ & $O\left(\dfrac{MD}{\sqrt{T+1}}\right)$ & $O\left(\dfrac{M^2p_*}{\mu(T+1)}\right)$  \\
				Accel.~Gen.~Grad Mthd & - & - & $O\left(\dfrac{LD^2}{p_*(T+1)^2}\right)$ & $O\left((1-\sqrt{\mu/L})^T\right)$ \\
                \hline
			\end{tabular}
	}
 \caption{Convergence rates for minimizing $M$-Lipschitz functions $h_i$ with varied structure in $\frac{1}{2}h_i^2$ and minimizer $y^*$ with $\|y_0-y^*\|\leq D$.}  \label{tab:rates}
\end{table}

Applying these methods to the reformulations of feasibility and optimization problems gives accelerated, projection-free methods for smooth or strongly convex sets. 
\begin{theorem}[Accelerated Guarantees for Feasibility Problems] \label{thm:feas-rates}
    Consider the setting of~\eqref{eq:feas-constants}, and let $R=\min\{R(S_i - e_i)\}$. In general, applying the level projection method~\eqref{eq:level-proj-method} with $\bar h=1$ converges to feasibility with
    $$ \min_{k\leq T}\max_i \gamma_{S_i,e_i}(y_k) - 1 \leq
        \frac{\mathrm{dist}(y_0,\cap S_i)}{R\sqrt{T+1}} + \frac{\mathrm{dist}(y_0,\cap S_i)^2}{2R^2(T+1)} $$
    and if $\mu>0$ and $T+1 \geq \frac{32}{R^2\mu}\log(\mu\ \mathrm{dist}(y_0,\cap S_i)^2)$, this improves to
    $$ \min_{k\leq T}\max_i \gamma_{S_i,e_i}(y_k) - 1 \leq \frac{\sqrt{32}}{R^2\mu(T+1)} + \frac{64}{R^4\mu^2(T+1)^2}\ . $$
    If $\beta<\infty$ and $y^*$ is a minimizer of~\eqref{eq:Feas-Gauge-Problem}, applying the accelerated method~\eqref{eq:accel-method} converges with
    $$ \max_i \gamma_{S_i,e_i}(y_T) - h(y^*) \leq \frac{h(y_0)^2 - h(y^*)^2 + \gamma_0\|y_0-y^*\|^2}{2h(y^*)} \min\left\{\left(1-\sqrt{\frac{\mu}{L}}\right)^T,\ \frac{4L}{\left(2\sqrt{L} +T\sqrt{\gamma_0}\right)^2}\right\},$$
    where $\gamma_0=\frac{t_0(t_0L-\mu)}{1-t_0}$.
\end{theorem}
\begin{proof}
    The guarantees for the level projection method follow immediately from its convergence guarantees proven in Theorem~\ref{thm:level-proj-convergence}, substituting in $\bar h=1$ and the constants~\eqref{eq:feas-constants}. Similarly, the guarantees for the accelerated generalized gradient method follow immediately from Theorem~\ref{thm:acclerated-gradient-convergence}.
\end{proof}

\begin{theorem}[Accelerated Guarantees for Optimization Problems] \label{thm:OPT-rates}
    Consider the setting of~\eqref{eq:OPT-constants}, let $x^*$ maximize~\eqref{eq:OPT-Problem}, and $R=\min\{R(S_i), R(S_f)\}$. Let $x_0=y_0=0$, $y_k$ denote the iterates of some method minimizing the radial dual problem, and $x_k = y_k/h(y_k) \in \cap S_i$.
    In general, the iterates $y_k$ of the subgradient method~\eqref{eq:subgrad-method} with $s_k = \frac{D}{\|h(y_k)g_k\|\sqrt{T+1}}$ converges to optimality with
    $$ \frac{f(x^*) - \max_{k\leq T} f(x_k)}{f(x^*)} \leq  \frac{\|x_0-x^*\|}{R\sqrt{T+1}} + \frac{\|x_0-x^*\|^2}{2R^2(T+1)} $$
    and if $\alpha>0$ and $s_k=\frac{2}{\mu(k+2)+\frac{1}{R^4\mu(k+1)}}$, this improves to
    $$ \frac{f(x^*) - \max_{k\leq T} f(x_k)}{f(x^*)} \leq \frac{4}{R^2\mu(T+2)} + \frac{4\|x_0-x^*\|^2}{R^4\mu(T+1)(T+2)} \ . $$
    If $\beta<\infty$, applying the accelerated method~\eqref{eq:accel-method} converges with
    $$ \frac{f(x^*) -f(x_T)}{f(x^*)} \leq \frac{1}{2}\left(\frac{f(x^*)^2}{f(x_0)^2} - 1  + \gamma_0\|x_0-x^*\|^2\right)\min\left\{\left(1-\sqrt{\frac{\mu}{L}}\right)^T,\ \frac{4L}{\left(2\sqrt{L} +T\sqrt{\gamma_0}\right)^2}\right\} \ , $$
    where $\gamma_0=\frac{t_0(t_0L-\mu)}{1-t_0}$.
\end{theorem}
\begin{proof}
    Note by~\cite[Proposition 24]{grimmer2023radial1}, $y^*=x^*/f(x^*)$ minimizes $h$.
    Applying the subgradient method's convergence guarantees proven in Theorem~\ref{thm:subgradient-convergence} with $y_0=0$, establishes either
     $$ \min_{k\leq T} h(y_k) - h(y^*) \leq \frac{\|y^*\|}{R\sqrt{T+1}} + \frac{\|y^*\|^2}{2R^2h(y^*)(T+1)} $$
     or if $\alpha>0$
     $$ \min_{k\leq T} h(y_k) - h(y^*) \leq \frac{4h(y^*)}{R^2\mu(T+2)} + \frac{4\|y^*\|^2}{R^4\mu h(y^*)(T+1)(T+2)} \ . $$
    Similarly, the accelerated generalized gradient method by Theorem~\ref{thm:acclerated-gradient-convergence} has
     $$ h(y_T) - h(y^*) \leq \frac{h(y_0)^2 - h(y^*)^2 + \gamma_0\|y^*\|^2}{2 h(y^*)} \min\left\{\left(1-\sqrt{\frac{\mu}{L}}\right)^T,\ \frac{4L}{\left(2\sqrt{L} +T\sqrt{\gamma_0}\right)^2}\right\} \ . $$
    To translate these radial dual optimality guarantees to the original (primal) problem, observe that $x_k=y_k/h(y_k)$ and $h(y_k)\geq f^{\Gamma}(y)=\sup \{v>0 \mid v\cdot f(y_k/v)\leq 1\}$ have $f(x_k) \geq 1/h(y_k)$ and $x_k \in \cap S_i$ and $x^*$ has $f(x^*) = 1/h(y^*)$. Hence each result above upper bounds $1/f(x_k) - 1/f(x^*)$ by some $\epsilon$. The claims then follow as this implies $(f(x^*) - f(x_k))/f(x^*) \leq f(x_k) \epsilon \leq f(x^*)\epsilon$.
\end{proof}

These projection-free convergence guarantees closely relate to those of typical projected methods. Suppose for ease of comparison $x_0=0$. Then the classic projected subgradient method convergence rate in objective gap is $\frac{G\|x^*\|}{\sqrt{T}}$, whereas by Theorem~\ref{thm:OPT-rates}, our radial subgradient method has convergence rate is $\frac{f(x^*)\|x^*\|}{R(S_f)\sqrt{T}} + O(1/T)$. To related these, note from Lipschitz continuity that $\frac{f(0)}{G} \leq R(S_f)$. Thus, our radial subgradient method convergence bound has leading term bounded by
\begin{align*}
    \frac{f(x^*)\|x^*\|}{R(S_f)\sqrt{T}}
    &\leq \frac{f(x^*)}{f(0)}\frac{G\|x^*\|}{\sqrt{T}} \ .
\end{align*}
When initialized near optimal $f(0) \approx f(x^*)$, our radial subgradient method upper bound is at least as good as the classic subgradient bound. In general, which bound is stronger varies, depending on the quality of initialization and Lipschitz constants.

To compare the accelerated smooth convergence rates, for simplicity we focus on the dependence with respect to the objective smoothness constant $L_f$. Given access to orthogonal projections, accelerated gradient methods have objective gap converge at rate $O\left(\frac{L_{f} \|x^*\|^2}{T^2}\right)$. Considering only terms with $L_f$, our rate above is $O\left(\frac{f(x^*)D(\mathrm{hypo\ }f)^2}{R(\mathrm{hypo\ } f)^3}\frac{L_{f}  \|x^*\|^2}{T^2}\right)$. 
The factor $\frac{f(x^*)D(\mathrm{hypo\ }f)^2}{R(\mathrm{hypo\ } f)^3}$ relating these guarantees is always at least one as $f(x^*)$ and $D(\mathrm{hypo\ }f)$ both upper bound $R(\mathrm{hypo\ } f)$. So our proven guarantee is weaker by this geometric constant. In exchange for this factor, the considered radial methods avoid projections.

\noindent {\bf Further Methods and Oracle Models (When $n$ is Large).} 
Lastly, since the level projection and generalized gradient methods scale poorly with $n$ (i.e., requiring QP solves), we note two methods applicable when $n$ is large and each $h_i$ when squared is $L$-smooth. The accelerated smoothing method of~\cite{Nesterov2005,Beck2012} replaces the finite maximum by a $1/\epsilon$-smooth approximation. Then applying Nesterov's accelerated method gives an overall convergence rate of $O(L/T)$, improving on the subgradient method's $O(1/\sqrt{T})$. Renegar and Grimmer~\cite{RenegarGrimmer2022}'s restarting scheme showed such a method can attain a faster $O(1/\sqrt{\epsilon})$ rate, when $\mu$-strong convexity additionally holds. Alternatively, replacing the finite maximum by $\max_{\theta\in \Delta} \sum \theta_i h_i(x)$ where $\Delta$ is the simplex gives an equivalent convex-concave minimax optimization problem. Then the same $O(L/T)$ speed up can be attained by the extragradient method~\cite{Nemirovski2004}.

Other sophisticated nonsmooth minimization methods could also be explored. Classical bundle methods~\cite{Lemarechal1975,Wolfe1975}, which construct cutting plane models used to produce stable descent sequences, could be applied with convergence guarantees following from their recent literature~\cite{Lan2015,Du2017,Liang2021,atenas2023unified,diaz2023optimal}. An alternative scheme for minimizing finite maximums of smooth functions was recently proposed by Han and Lewis~\cite{han2023survey}. Their approach aims to maintain  (at least) $n$ points, each staying within a smooth region of the objective (where a single $h_i$ attains the maximum). Then, each iteration solves a relatively simple second-order cone program to update each point within its region.

\section{Applications and Numerics}\label{sec:appli}
In this section, we show the effectiveness of our gauge theory and the above algorithms on several smooth and/or strongly convex feasibility problems and constrained optimization problems. We run all methods in Section~\ref{sec:algs} on feasibility problems to verify the convergence rates in Table~\ref{tab:rates}. Then, considering constrained optimization problems, we compare our projection-free accelerated method applied to the radial dual~\eqref{eq:OPT-Gauge-Problem} against standard first-order and second-order solvers. Our numerical experiments are conducted on a two-core Intel i5-6267u CPU using Julia 1.8.5 and \texttt{JuMP.jl} to access solvers Gurobi 10.0.1, Mosek 10.0.2, COSMO 0.8.6, SCS 3.2.1\footnote{The source code is available at \url{https://github.com/nliu15/Gauges-and-Accelerated-Optimization-over-Smooth-Strongly-Convex-Sets}.}. In Appendix~\ref{append:jump}, we validate our numerics using \texttt{Convex.jl} instead of \texttt{JuMP.jl}, seeing similar performance.

Our numerics focus on ellipsoidal constraints $S_i=\{x \mid \|A_ix - b_i\|_{p_i}\leq 1\}$ since the strong convexity/smoothness can be directly controlled via $p_i$. Computing $\gamma_{S_i,e_i}$ has a closed form for $p_i\in\{1,2,3,4,\infty\}$ and otherwise corresponds to one-dimensional polynomial root finding, from which automatic differentiation can compute gradients. In general, reliance on numerical root finding may result in small errors on the order of machine precision. As occurs widely in nonsmooth optimization, small machine rounding inaccuracies can greatly change the algorithm trajectory as subgradients/normal vectors can change suddenly. This concern is only minor here as for the considered ellipsoids (and in general for any smooth sets), normal vectors vary continuously.

\subsection{Application to \texorpdfstring{$p$}{Lg}-norm Ellipsoid Feasibility Problems}\label{subsec:numerics1}

First, we illustrate the above algorithms by considering the $p$-norm ellipsoid feasibility problem
\begin{equation}
    \text{Find } x \text{ s.t. } x\in \{x \mid \|A_1x-b_1\|_{p_1}\leq \tau_1 \}\cap \{x \mid \|A_2x-b_2\|_{p_2}\leq \tau_2 \} \ ,
\end{equation}
setting $S_i=\{x \mid \|A_ix - b_i\|_{p_i}\leq 1\}$. Recall the discussion in Example~\ref{subsec:ex} showed ellipsoidal sets are smooth when $2\leq p < \infty$, strongly convex when $1<p\leq 2$, and both when $p=2$. We consider these three settings separately in dimensions $n=1600$, matching the speedups expected from Section~\ref{sec:algs}.

We generate synthetic ellipsoids corresponding to linear regression uncertainty sets with two differently corrupted data sets $i=1,2$. We generate $x_{true}\in\mathbb{R}^n$ and $A_i\in\mathbb{R}^{n\times n}$ with standard normal entries and $b_i = A_ix_{true} + \epsilon_i$ where $\epsilon_i$ has generalized normal distribution with shape parameter $\beta = p_i$. Then we select each $\tau_i$ such that $x_{true}\in S_i$ holds with probability $0.975$. Our feasibility problem can then be viewed as seeking any $x\in \cap S_i$ able to provide a reasonable estimate of $x_{true}$, performing well on each data set. Note setting the shape parameter $\beta<2$ gives a heavier tail than Gaussian data (with $\beta=1$ yielding a Laplace distribution) while $\beta>2$ gives a lighter tail (with $\beta=\infty$ yielding a uniform distribution).

Following~\eqref{eq:Feas-Gauge-Problem}, we approach finding a feasible point via $\min_y \max_i\{\gamma_{S_i,e_i}(y)\}$
where here each $e_i$ is computed by $35$ iterations of conjugate gradient applied to $A_ix=b_i$.  
We set $(p_1,p_2)=(1.5,1.8)$, $(p_1,p_2)=(2,2)$, $(p_1,p_2)=(3,4)$ to generate three cases with only strong convexity, both strong convexity and smoothness, and only smoothness. For each setting, the methods in Table~\ref{tab:rates} are considered: we implement subgradient methods with stepsize $\alpha_k = \eta$, $\eta/\sqrt{k+10}$, level methods with $\bar h$ as the optimal value and as one, and the accelerated generalized gradient method. Parameters $\eta,L,\mu$ for each method are tuned to two significant figures; see the source code for exact values.

\begin{figure}[t]
\begin{minipage}[t]{0.31\linewidth}
    \centering
	\includegraphics[width=\linewidth]{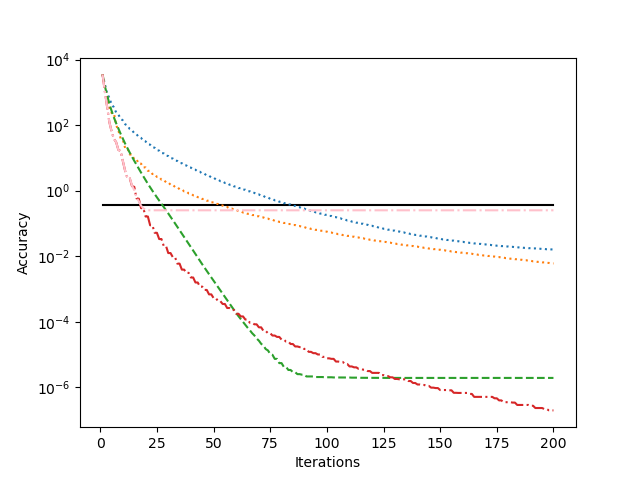}
	\caption*{(a) $(p_1,p_2)$=(1.5,1.8)}
\end{minipage}
\begin{minipage}[t]{0.31\linewidth}
	\centering
	\includegraphics[width=\linewidth]{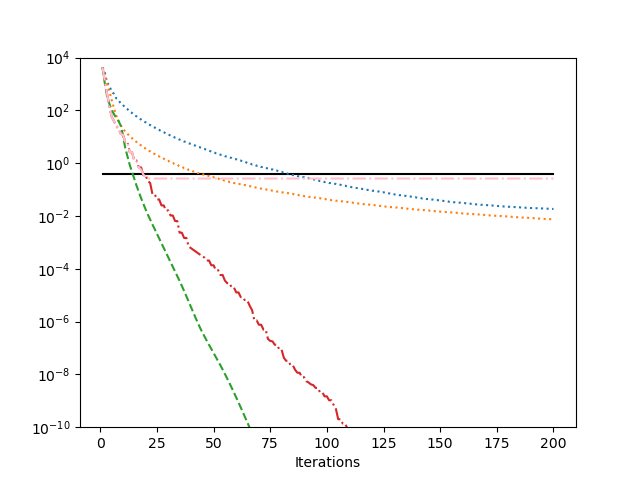}
	\caption*{(b) $(p_1,p_2)$=(2,2)}
\end{minipage}
\begin{minipage}[t]{0.365\linewidth}
	\centering
	\includegraphics[width=\linewidth]{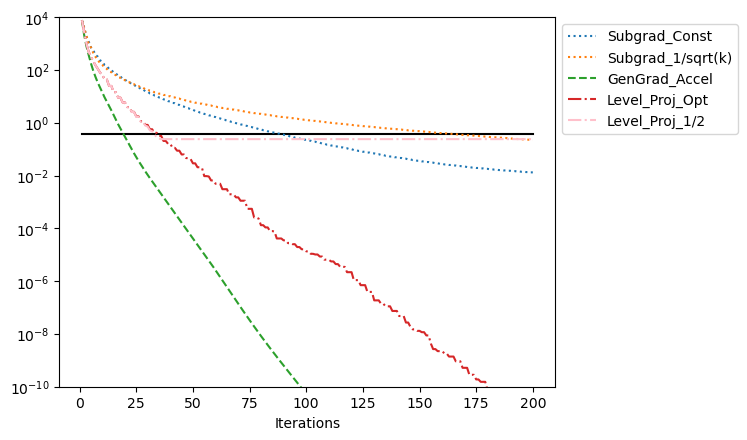}
	\caption*{\hskip-0.75cm (c) $(p_1,p_2)$=(3,4)}
\end{minipage}
\caption{The minimum accuracy of $\max_i\{\frac{1}{2}\gamma_{S_i,e_i}^2(x_k)\} - \max_i\{\frac{1}{2}\gamma_{S_i,e_i}^2(x^*)\}$ of finding the intersection point of two smooth/strongly convex $p$-norm ellipsoids. Note that all points below the black line at objective value $1/2$ correspond to feasible solutions.}
\label{fig:feasbility}
\end{figure}

The results are shown in Figure~\ref{fig:feasbility}. The solid black line denotes objective value one. So all solutions under this line correspond to the method having found a feasible point. In the strongly convex setting of Figure~\ref{fig:feasbility}(a), we can see that the level method with optimal value performs best. The subgradient method with constant stepsize and the accelerated generalized gradient method all perform reasonably up to a constant accuracy, after which nonsmoothness prevents further progress. For the smooth and strongly convex case shown in Figure~\ref{fig:feasbility}(b), the accelerated generalized gradient method and level method converges quickly while the subgradient methods are much slower. In the non-strongly convex smooth case, Figure~\ref{fig:feasbility}(c) looks similar to Figure~\ref{fig:feasbility}(b), with each method being somewhat slower. These numerics match our theory summarized in Table~\ref{tab:rates} except for the level method converges at a faster linear rate in smooth settings.

\subsection{Application to Trust Region Optimization Problems}\label{subsec:numerics2}
Finally, we compare the best of the previous first-order methods (the accelerated generalized gradient method) against standard solvers for synthetic constrained minimization problems. We consider constrained quadratic minimization problems over a $p$-norm ellipsoidal constraint set
\begin{equation} \label{eq:quad-SOCP}
    \begin{cases} \max_x & f(x) \\ \text{s.t. } & g(x) \leq 0
    \end{cases}=
    \begin{cases} \max_x &  1-\frac{1}{2}x^TQx -c^Tx  \\ \text{s.t. } & \|Ax-b\|^2_p\leq 1   \end{cases}
\end{equation}
for positive semidefinite $Q\in\mathbb{R}^{n\times n}$, and generic $A\in\mathbb{R}^{m\times n}, b\in\mathbb{R}^m, c\in\mathbb{R}^n$. For our numerics, we generate $Q, c, A, x_{feas}, \epsilon$ with standard normal entries and set $b=Ax_{feas}+\frac{1}{m} \epsilon$.
Each solver will be directly applied to~\eqref{eq:quad-SOCP}, first with $p=2$ (making this a simple second-order cone program) and then with $p=4$ (making this a more general conic program). 

For our projection-free method, we apply the accelerated generalized gradient method to the square of the radial reformulation~\eqref{eq:OPT-Gauge-Problem} to benefit from any smoothness and/or strong convexity present in the constraints.
(For this radial transformation to be well-defined, we need the origin to be feasible. To ensure this, we compute $e$ by approximately solving $Ax=b$ via $30$ iteration of the conjugate gradient method and then translate the problem to place $e$ at the origin.)
As in Section~\ref{subsec:numerics1}, we tune any algorithmic parameters (e.g., $\eta,L,\mu$) for each considered method to optimize final performance to two significant digits.

We compare our approach with six alternatives. We compare with the default configurations of Gurobi~\cite{gurobi} and Mosek~\cite{mosek} (second-order solvers) and of COSMO~\cite{cosmo} and SCS~\cite{scs} (first-order solvers). Further, we compare against two simple alternative first-order methods that utilize the functional constraint form of~\eqref{eq:quad-SOCP}: First, we consider a ``switching subgradient method'', which at each step either takes a gradient step on the objective or constraint function depending on whether $x_k$ is feasible, defined as
\begin{equation}
    x_{k+1} = x_k - \alpha_{k} \zeta_k, \ \zeta_k \in \begin{cases}  -\nabla f(x_k) & \text{if } g(x_k) \leq 0    \\ \nabla g(x_k) & \text{else,}          \end{cases}
\end{equation}
where stepsizes are tuned for each of the two cases of $\alpha_k = \begin{cases} \eta_1 / \sqrt{k+10} & \text{if } g(x_k) \leq 0\\ \eta_2 / \sqrt{k+10} & \text{else} \end{cases}$. Second, we consider directly maximizing the problem's Lagrangian $\mathcal{L}(x) = f(x) - \lambda^*g(x)$ assuming the dual optimal Lagrange multiplier $\lambda^*$ is given in advance. Then we compare with the following ``ideal Lagrangian accelerated method''
\begin{equation}
    \begin{cases}
        y_{k+1}=x_k-\frac{1}{L}\nabla \mathcal{L}(x_k)  \\
        x_{k+1}=y_{k+1} - \beta_k(y_{k+1}-y_k) \ . 
    \end{cases} 
\end{equation}


\begin{figure}[t]
\begin{minipage}[t]{0.31\linewidth}
        \captionsetup{justification=centering}
        \centering
	\includegraphics[width=\linewidth]{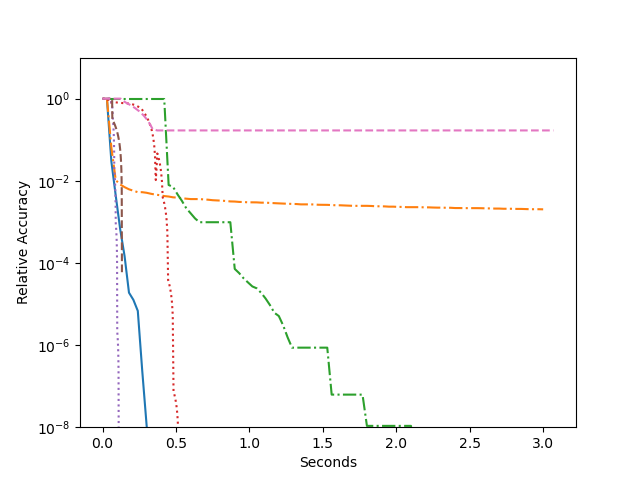}
	\caption*{(a) $(n,m)=(400,200)$}
\end{minipage}
\begin{minipage}[t]{0.31\linewidth}
        \captionsetup{justification=centering}
	\centering
	\includegraphics[width=\linewidth]{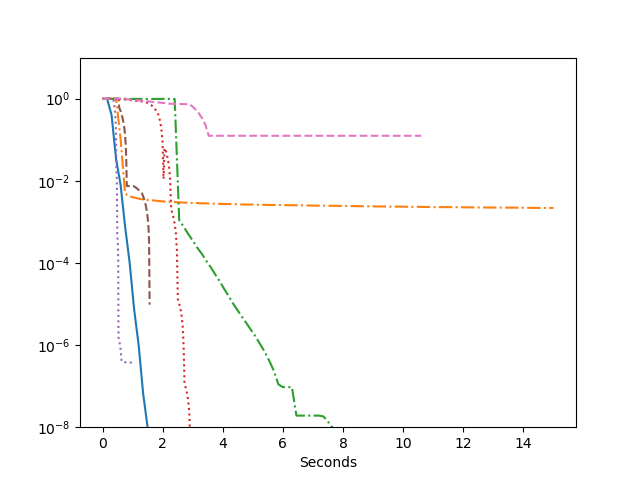}
        \caption*{(b) $(n,m)=(800,400)$}
\end{minipage}
\begin{minipage}[t]{0.36\linewidth}
        \captionsetup{justification=centering}
	\centering
	\includegraphics[width=\linewidth]{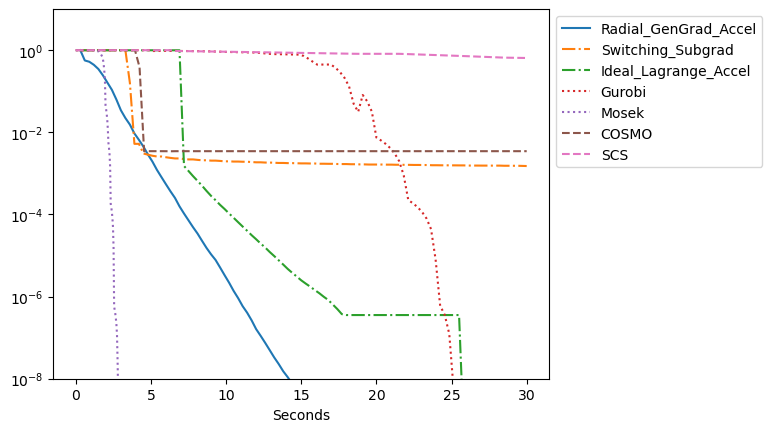}
        \caption*{\hskip-0.75cm (c) $(n,m)=(1600,800)$}
\end{minipage}
\caption{The minimum relative accuracy $|f(x_k)-f(x^*)|/|f(x_0)-f(x^*)|$ of~\eqref{eq:quad-SOCP} for varied problem dimensions $(n,m)$ over 3 seconds, 15 seconds, 30 seconds, respectively.}
\label{fig:ellips_cons}
\end{figure}



\noindent {\bf $p=2$-norm Ellipsoid Constrained Optimization.}
First, we consider the problem of quadratic optimization over a $p=2$-norm ellipsoid constraint, giving a smooth and strongly convex objective function and constraints,
\begin{equation} \label{eq:quad-ellip}
    \begin{cases} \max_x & 1 - \frac{1}{2}x^TQx - c^Tx \\ \text{s.t. } &\|Ax-b\|^2_2\leq 1 \end{cases}
\end{equation}
which has an equivalent unconstrained radially dual problem \footnote{This is computed directly by definition, refer to~\cite{grimmer2023radial2} Section 2.1.}
\begin{equation}
    \min_y \max\left\{ \frac{c^Ty +1 +\sqrt{(c^Ty+1)^2+2y^TQy}}{2}, \frac{-b^TAy + \sqrt{(b^TAy)^2-4(1-\|b\|_2^2)\|Ay\|^2_2)}}{1-\|b\|_2^2} \right\} \ .
\end{equation}

Note the second term above is exactly the gauge $\gamma_{\{x \mid \|Ax-b\|_2\leq 1\}}(y)$. As before, the smoothness and strong convexity of the objective and ellipsoid constraint correspond to both terms above being smooth and strongly convex when squared. As a result, this can be solved at a linear rate by an accelerated projection-free method.

Figure~\ref{fig:ellips_cons} shows the result of each method applied to sampled problem instances with dimensions $(n,m)=(400,200), (800,400), (1600,800)$. The two second-order solvers have high iteration costs but converge quickly once they start making progress. In this setting, our radial accelerated method is relatively competitive with Gurobi and COSMO, but always outperformed by Mosek.

\noindent {\bf $p=4$-norm Ellipsoid Constrained Optimization.}
Lastly, we consider a ``harder'' quadratic optimization problem with a quartic ellipsoid constraint, making the constraint set smooth but not strongly convex,
\begin{equation} \label{eq:quart}
    \begin{cases} \max_x & 1 - \frac{1}{2}x^TQx - c^Tx \\ \text{s.t. } &\|Ax-b\|^2_4\leq 1 \end{cases}
\end{equation}
which has an equivalent unconstrained radially dual problem
\begin{equation} \label{eq:quart_rad}
    \min_y \max\left\{ \frac{c^Ty +1 +\sqrt{(c^Ty+1)^2+2y^TQy}}{2}, \gamma_{S}(y) \right\} \ .
\end{equation}
where $S = \{x \mid \|Ax-b\|_4 \leq 1\}$. The gauge $\gamma_{S}$ has a closed form given by quartic formula, and only costs one matrix-vector multiplication with $A$ to evaluate. In this case, the two terms in~\eqref{eq:quart_rad} are only guaranteed to have smoothness when squared.

\begin{figure}[t]
\begin{minipage}[t]{0.31\linewidth}
        \captionsetup{justification=centering}
        \centering
	\includegraphics[width=\linewidth]{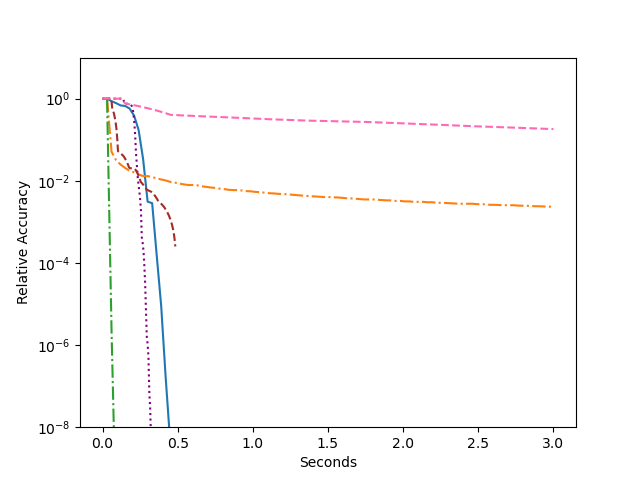}
	\caption*{(a) $(n,m)=(400,200)$}
\end{minipage}
\begin{minipage}[t]{0.31\linewidth}
        \captionsetup{justification=centering}
	\centering
	\includegraphics[width=\linewidth]{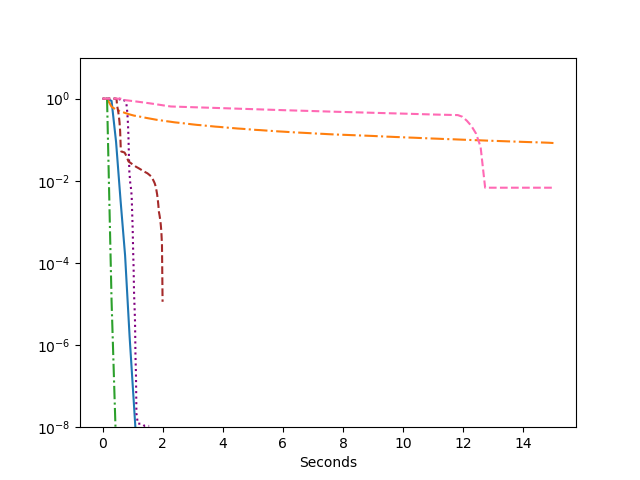}
        \caption*{(b) $(n,m)=(800,400)$}
\end{minipage}
\begin{minipage}[t]{0.36\linewidth}
        \captionsetup{justification=centering}
	\centering
	\includegraphics[width=\linewidth]{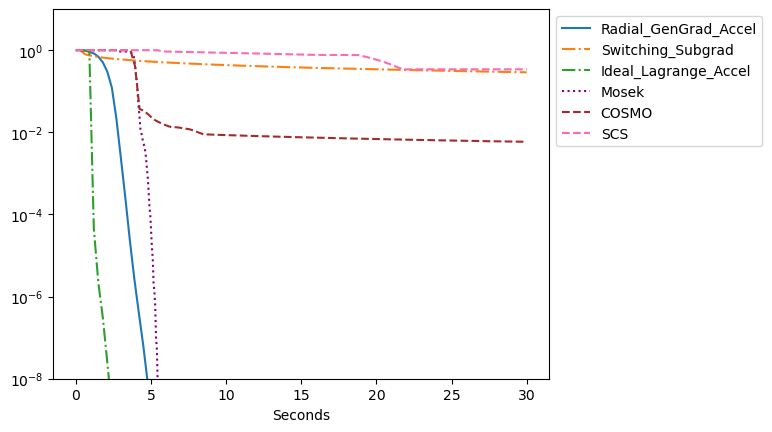}
        \caption*{\hskip-0.75cm (c) $(n,m)=(1600,800)$}
\end{minipage}
\caption{The minimum relative accuracy $|f(x_k)-f(x^*)|/|f(x_0)-f(x^*)|$ of~\eqref{eq:quart} for varied problem dimensions $(n,m)$ over 3 seconds, 15 seconds, 30 seconds, respectively.}
\label{fig:quartic_cons}
\end{figure}


Figure~\ref{fig:quartic_cons}\footnote{Gurobi is limited to linear and quardratic programs, it is not able to run on $p=4$-norm ellipsoids.} shows the first-order solvers have their performance fall off as the problem size grows.  The second-order solver's performance is matched by our radial method at all problem sizes, increasingly so as the problem dimension grows. This motivates future works, building more practical radial methods and testing their performance on real (non-synthetic) problem instances. Unlike the prior $p=2$ setting, the idealized Lagrangian accelerated method outperforms our radial method by a moderate factor (between two and five). Given this method's benefited from knowledge of the optimal Lagrange multiplier, this is not unexpected.

    \section{Conclusion} \label{sec:concl}

In this paper, we showed that $\alpha$-strongly convex and/or $\beta$-smooth sets always have $O(\alpha)$-strongly convex and/or $O(\beta)$-smooth gauge squared. As a result, feasibility and optimization problems over structured sets can be recast as structured optimization problems minimizing these gauges squared. To benefit from this, we proposed fast first-order methods targeting such squared strongly convex/smooth properties. Consequently, we derived accelerated convergence guarantees of $O(1/T)$ for problems over strongly convex sets, $O(1/T^2)$ for smooth sets, and accelerated linear convergence given both.
Numerically, we find these methods are very effective in sample synthetic experiment settings. This indicates future developments of gauge-based methods may provide a competitive alternative to current solvers based on ADMM or interior point approaches.

Additionally, future works may be able to identify further relationships between structured sets and their gauges. We expect that notions of uniform smoothness/convexity of a set will correspond to a H\"older/uniform smoothness of the set's gauge raised to an appropriate power.
Such analysis may explain the linear convergence we observe for our radial method in settings with $p=4$ norms (where strong convexity does not hold, but some uniform convexity notion may hold).




\section*{Statements and Declarations}

\paragraph{Acknowledgements}
The authors want to thank three anonymous referees and the associated editor for invaluable critical feedback, which much improved this work. Additionally, we thank Michael P.~Friedlander for their early comments on this work, which led us to improve the numerical comparisons using \texttt{JuMP.jl} instead of \texttt{Convex.jl}. 

\paragraph{Competing Interests and Funding}
This work does not have any funding to disclose or any conflicts of interest.

\bibliography{sn-bibliography}

\begin{appendices}
{\small
    \section{} \label{sec:append}
\subsection{Explanation of Examples~\ref{subsec:ex}} \label{append:ex}
A useful lemma of the equivalence of $\ell_p$-norm in finite dimensions ~\cite[Proposition 6.12]{folland1999real} is given below
\begin{lemma} \label{lem:norm-embeddings}
    For $x \in \mathbb{R}^n$, if $0< p <q \leq \infty$, then $\|x\|_q \leq \|x\|_p \leq n^{1/p-1/q}\|x\|_q$. 
\end{lemma}
Here we carry out detailed calculations supporting our claimed strong convexity and smoothness constants for $p$-norm balls $B_p=\{x \mid \|x\|_p\leq 1\}$, more generally translated $p$-norm balls $T_p=\{x \mid \|x-b\|_p\leq 1\}$, and their gauges, in $\mathbb{R}^n$.

Let $f(x) = \|x-b\|_p^p$ so that $T_p=\{x \mid f(x)\leq 1\}$. Here we compute the smoothness and strong convexity at any point $\bar y\in\bdry T_p$. Without loss of generality, $\bar y-b\geq 0$. Then $f$ has gradient and Hessian given by
\begin{align*}
    \nabla f(\bar y) &= p (\bar y-b)^{p-1} \ , \\
    \nabla^2 f(\bar y) &= p(p-1)\mathrm{diag}((\bar y-b)^{p-2})
\end{align*}
applying powers to vectors elementwise. Applying Lemma~\ref{lemma:level}, $T_p$ then must  be locally $\alpha(\bar y)$-strongly convex and $\beta(\bar y)$-smooth such that
\[
\text{for }p\le 2 \quad \alpha(\bar y)=\frac{p(p-1)}{\|\nabla f(\bar y)\|_2\,\|\bar y-b\|_\infty^{\,2-p}},\qquad
\text{while for }p\ge 2\quad
\beta(\bar y)=\frac{p(p-1)\|\bar y-b\|_\infty^{\,p-2}}{\|\nabla f(\bar y)\|_2}.
\]
Using $\|\bar y-b\|_\infty\le 1$ and Lemma~\ref{lem:norm-embeddings} gives the pointwise bounds
\[
\alpha(\bar y)\ \ge\ \alpha_*:=(p-1)\,n^{\frac12-\frac1p},\qquad
\beta(\bar y)\ \le\ \beta^*:=(p-1)\,n^{\frac12-\frac1p}.
\]
Our main theorems then give local smoothness and strong convexities for $\frac{1}{2}\gamma_{B_p}^2(y) = \frac{1}{2}\|y\|^2_p$. Computing the inf/sup of these quantities over all boundary points $\bar y$ gives the following strong convexity and smoothness constants
\begin{align*}
    \mu &= \inf_{\|\bar y\|_p=1} \frac{\|\nabla f(\bar y)\|^3}{2(\nabla f(\bar y)^T\bar y)^3}\left( \frac{\nabla f(\bar y)^T\bar y}{\|\nabla f(\bar y)\|}+\alpha(\bar y)\|\bar y\|^2-\sqrt{\left(\frac{\nabla f(\bar y)^T\bar y}{\|\nabla f(\bar y)\|}+\alpha(\bar y)\|\bar y\|^2\right)^2-4\alpha(\bar y)\left(\frac{\nabla f(\bar y)^T\bar y}{\|\nabla f(\bar y)\|}\right)^3}\right) \ , \\
    L &= \sup_{\|\bar y\|_p=1} \frac{\|\nabla f(\bar y)\|^3}{2(\nabla f(\bar y)^T\bar y)^3}\left( \frac{\nabla f(\bar y)^T\bar y}{\|\nabla f(\bar y)\|}+\beta(\bar y)\|\bar y\|^2+\sqrt{\left(\frac{\nabla f(\bar y)^T\bar y}{\|\nabla f(\bar y)\|}+\beta(\bar y)\|\bar y\|^2\right)^2-4\beta(\bar y)\left(\frac{\nabla f(\bar y)^T\bar y}{\|\nabla f(\bar y)\|}\right)^3}\right)\ .
\end{align*}

For the case of $p$-norm balls (i.e., $b=0$), numerically computing these bounds shows the simple formulas of $\mu=L=(p-1)n^{\frac{1}{2}-\frac{1}{p}}$ are not tight. For example, when $n=2,p=3$, the smoothness constant above has approximate value $L\approx 2.1424$ whereas $(p-1)n^{\frac{1}{2}-\frac{1}{p}} = 2^{7/6}\approx 2.2449$.

For generic translated balls, here we provide relatively simple lower and upper bounds for $\mu$ and $L$ above. 

\noindent\emph{Strong convexity.}
By the gauge strong convexity bound (the same inequality as in~\eqref{eq:gauge-strong-convexity-bound}),
\[
\mu\ \ge\ \inf_{\|x-b\|_p=1}\ \frac{\alpha(x)}{\frac{\nabla f(x)^T x}{\|\nabla f(x)\|_2}+\alpha(x)\|x\|_2^2}.
\]
Using $\frac{\nabla f(x)^T x}{\|\nabla f(x)\|_2}\le \|x\|_2$ (Cauchy–Schwarz) and the monotonicity of
$\alpha\mapsto \frac{\alpha}{A+\alpha t}$ for $A,t\ge 0$, we obtain
\[
\mu\ \ge\ \inf_{\|x-b\|_p=1}\ \frac{\alpha_*}{\|x\|_2+\alpha_*\,\|x\|_2^2}.
\]
Finally, $\|x\|_2\le \|x-b\|_2+\|b\|_2\le \|x-b\|_p+\|b\|_2=1+\|b\|_2$ yields the explicit bound
\begin{equation}\label{eq:translated-mu}
\quad
\mu\ \ge\ \frac{\alpha_*}{(1+\|b\|_2)\ +\ \alpha_*(1+\|b\|_2)^2}
\ =\ \frac{(p-1)\,n^{\frac12-\frac1p}}{(1+\|b\|_2)\ +\ (p-1)\,n^{\frac12-\frac1p}(1+\|b\|_2)^2}\ .
\quad
\end{equation}
By Lemma~\ref{lem:struc_trans}, for $E_p=\{y:\ Ay-b\in B_p\}$ we get
\[
\text{$\frac12\gamma_{E_p}^2$ is }\quad
\lambda_{\min}(A^TA)\times \mu\text{-strongly convex,}
\]
so~\eqref{eq:translated-mu} multiplied by $\lambda_{\min}(A^TA)$ provides a global constant.

\medskip
\noindent\emph{Smoothness.}
From the gauge smoothness bound (the same inequality as in~\eqref{eq:gauge-smooth-bound}), on $\partial T_p$,
\[
L\ \le\ \sup_{\|x-b\|_p=1}\ \frac{\frac{\nabla f(x)^T x}{\|\nabla f(x)\|_2}+\beta(x)\|x\|_2^2}{\left(\frac{\nabla f(x)^T x}{\|\nabla f(x)\|_2}\right)^3}.
\]
Using $\|x-b\|_\infty\le 1$, $\|x-b\|_{2(p-1)}\le 1$, Lemma~\ref{lem:norm-embeddings}, and convexity
$\nabla f(x)^Tx\ge f(x)-f(0)=1-\|b\|_p^p$, we get
\begin{equation}\label{eq:translated-L}
\quad
L\ \le\ \frac{p^2\,(1-\|b\|_p^p)\ +\ p^3(p-1)\,n^{\,1-\frac{2}{p}}\,\big(1+\|b\|_p\big)^2}{(1-\|b\|_p^p)^3}\ .
\quad
\end{equation}
If we only want a simpler expression under the standing assumption $\|b\|_p<1$, then
$(1+\|b\|_p)^2\le 4$ and $1-\|b\|_p^p \leq 1 \leq \frac{1}{2}p(p-1)n^{1-\frac{2}{p}}$, hence~\eqref{eq:translated-L} reduces to
\[
L\ \le\ \frac{p^2\, \frac{1}{2}p(p-1)n^{1-\frac{2}{p}}\ +\ 4p^3(p-1)\,n^{\,1-\frac{2}{p}}}{(1-\|b\|_p^p)^3}\ = \frac{9}{2}\left(\frac{p}{1-\|b\|_p^p}\right)^3(p-1)n^{1-\frac{2}{p}}.
\]
Again, Lemma~\ref{lem:struc_trans} multiplies this by $\lambda_{\max}(A^TA)$ for the set $E_p$.

\subsection{Deferred Proofs of Lemmas and Propositions}

\subsubsection{Proof of Proposition~\ref{prop:smooth_func}} \label{proof:smooth_func}
The equivalence $\mathbf{(a)\Leftrightarrow(b)}$ is given by the equivalence in~\cite[Theorem 2.1.5 $\mathbf{(2.1.9)\Leftrightarrow (2.1.11)}$]{nesterov2018lectures}, and the equivalence $\mathbf{(a)\Leftrightarrow(c)}$ is given by the equivalence in~\cite[Proposition 12.60 $\mathbf{(e)\Leftrightarrow(d)}$]{rockafellar2009variational}.  The implication $\mathbf{(a) \Rightarrow (d)}$ is trivial as global smoothness implies local smoothness. The final implication $\mathbf{(d) \Rightarrow (b)}$ is nontrivial:

Assuming local smoothness $\mathbf{(d)}$, fix any $\Tilde{L}>L$. Our proof of $\mathbf{(a)}$ relies on the following claim.
\begin{claim} \label{claim:point_lip}
    If $f$ is globally $\Delta$-smooth and locally $\Tilde{L}$-smooth everywhere, then $f$ is globally $\left(\frac{1}{2\Delta}+\frac{1}{2\tilde{L}}\right)^{-1}$-smooth.\footnote{Both $\Delta$ and $\tilde {L}$ can be $\infty$.}
\end{claim}
Note, vacuously, $f$ is $\Delta_0=\infty$-smooth globally. Then inductively applying the claim, $f$ must be $\Delta_k$-smooth for every $k$ where $\Delta_k$ satisfies $\frac{1}{\Delta_{k+1}}=\frac{1}{2\Delta_k}+\frac{1}{2\Tilde{L}}$. This recurrence with initial condition $\Delta_0=\infty$, ensures $\frac{1}{\Delta_{n}}=\frac{1}{\Tilde{L}}-\frac{1}{2^{n}}\frac{1}{\Tilde{L}}$.
Hence $\Delta_k \rightarrow \Tilde{L}$ and so $f$ is globally $\Tilde{L}$-smooth. Thus it must be globally $L$-smooth. All that remains is to prove the claim.

\noindent {\bf Proof of Claim~\ref{claim:point_lip}.}
We say $f$ has $L$-Lipschitz gradient at $x\in\mathcal{E}$ if for some $\eta>0$, every $y\in B(x,\eta)$ has $\|\nabla f(y)-\nabla f(x)\| \leq L\|y-x\|$. We say $f$ has pointwise $L$-Lipschitz gradient if it has $L$-Lipschitz gradient at every $x\in\mathcal{E}$. Note as a simple result, having pointwise $L$-Lipschitz gradient implies $f$ has uniformly $L$-Lipschitz gradient~\cite[Corollary 2.4]{durand2010pointwise}. Hence, our proof aims to show $f$ has pointwise $\left(\frac{1}{2\Delta}+\frac{1}{2\Tilde{L}}\right)^{-1}$-Lipschitz gradient. To that end, consider some $x\in\mathcal{E}$ and let $\eta>0$ be such that condition $\mathbf{(d)}$ holds at $x$.

Consider any $y\in B(x, \eta/2)$ and $z\in B(x, \eta)$.
First, note by assuming $f$ is $\Delta$-smooth, we have the following standard inequality
\begin{equation} \label{eq:main_bound_1}
f(y) \geq f(x)+\nabla f(x)^T(y-x)+\frac{1}{2\Delta}\|\nabla f(y)-\nabla f(x)\|^2 \ .
\end{equation}
Second, note that convexity and the local $\Tilde{L}$-smoothness of $f$ ensures
\begin{equation}
f(y)+\nabla f(y)^T(z-y) \leq f(z) \leq f(x)+\nabla f(x)^T(z-x)+\frac{\Tilde{L}}{2}\|z-x\|^2 \ .  \label{eq:convex_and_local}
\end{equation}
Considering $z=x+\eta \frac{\nabla f(y)-\nabla f(x)}{\|\nabla f(y)-\nabla f(x)\|}$, convexity and~\eqref{eq:convex_and_local} implies
$$ \nabla f(x)^T(y-x) \leq f(y) -f(x) \leq \nabla f(y)^T(y-x) -\eta\|\nabla f(y)-\nabla f(x)\| +\frac{\Tilde{L}\eta^2}{2}\ . $$
Therefore $\|\nabla f(y)-\nabla f(x)\| \leq \Tilde{L}\eta$ since $(\nabla f(y)-\nabla f(x))^T(y-x) \leq \|\nabla f(y)-\nabla f(x)\| \eta/2$ by Cauchy-Schwarz. Thus we can consider $z=x+(\nabla f(y)-\nabla f(x))/\tilde{L} \in B(x,\eta)$. Then~\eqref{eq:convex_and_local} implies
\begin{equation} \label{eq:main_bound_2}
    f(y) \leq f(x) +\nabla f(y)^T(y-x) -\frac{1}{2\Tilde{L}}\|\nabla f(y)-\nabla f(x)\|^2 \ . 
\end{equation}
Combining~\eqref{eq:main_bound_1} and~\eqref{eq:main_bound_2} gives 
$(\nabla f(y)-\nabla f(x))^T(y-x)\geq \left(\frac{1}{2\Delta}+\frac{1}{2\Tilde{L}}\right)\|\nabla f(y)-\nabla f(x)\|^2 . $
Again applying Cauchy-Schwarz gives the desired Lipschitz gradient condition at $x$, from which we conclude $f$ has $\left(\frac{1}{2\Delta}+\frac{1}{2\Tilde{L}}\right)^{-1}$-Lipschitz gradient globally.

\subsubsection{Proof of Proposition~\ref{prop:sc_func}} \label{proof:sc_func}
The equivalence $\mathbf{(a)\Leftrightarrow(b)}$ is given by the equivalence in~\cite[Theorem 5.24 $\mathbf{(ii) \Leftrightarrow (iii)}$]{beck2017first}.
The equivalence $\mathbf{(a)\Leftrightarrow(c)}$ follows by combining a few standard results. Note $h$ being $\mu$-strongly convex is equivalent to the Fenchel conjugate $h^*$ being $\frac{1}{\mu}$-smooth by~\cite[Theorem 12.60 $\mathbf{(a)\Leftrightarrow(b)}$]{rockafellar2009variational}. Moreover, $h^*$ is $\frac{1}{\mu}$-smooth if and only if there exists a closed convex function $g$ such that $g+h^*=\frac{1}{2\mu}\|\cdot\|^2$ by~\cite[Lemma 4 $\mathbf{[1]\Leftrightarrow[2]}$]{zhou2018fenchel}. This statement about sums of conjugates is equivalent to the existence of $g^*$ with the following inf-convolution $g^*\ \square \ h=\frac{\mu}{2}\|\cdot\|^2$, by~\cite[Theorem 11.23(a)]{rockafellar2009variational}.

Finally, we show the equivalence with the local strong convexity condition $\mathbf{(d)}$. The implication $\mathbf{(a)\Rightarrow(d)}$ is trivial as global strong convexity implies local strong convexity. Again the local to global implication is nontrivial: We leverage our previous local to global result for smooth functions from Proposition~\ref{prop:smooth_func}(d). From this, it suffices to show that $f$ being locally $\Tilde{\mu} <\mu$-strongly convex implies $f^*$ is locally $1/\Tilde{\mu}$-smooth since one can then conclude $f^*$ is $1/\mu$-smooth and hence $f$  is $\mu$-strongly convex by~\cite[Theorem 12.60 $\mathbf{(a)\Leftrightarrow(b)}$]{rockafellar2009variational}. 

Consider any $x\in\mathcal{E}$, $g\in \partial f(x)$, $\Tilde{\mu} < \mu$, and $\eta>0$ such that the local strong convexity condition $\mathbf{(d)}$ holds w.r.t~$(x,g)$. For $z\in B(x,\eta)$, local strong convexity of $f$ gives the following lower bound
$$ f(z) \geq h(z) := f(x)+g^T(z-x)+\frac{\Tilde{\mu}}{2}\|z-x\|^2 \ . $$
For $z\not\in B(x,\eta)$, convexity of $f$ then gives the following lower bound 
$$ f(z) \geq l(z) := f(x)+g^T(z-x)+\frac{\Tilde{\mu} \eta}{2}\|z-x\| $$
since $\frac{\eta}{\|z-x\|} f(z) + (1-\frac{\eta}{\|z-x\|})f(x) \geq f(x + \frac{\eta(z-x)}{\|z-x\|}) \geq h(x + \frac{\eta(z-x)}{\|z-x\|})$. Hence $f \geq \min\{h,l\}$ and so $f^* \leq \max\{h^*,l^*\}$. Computing these two conjugates shows this is exactly local $1/\Tilde{\mu}$-smoothness of $f^*$ w.r.t.~$(g,x)$ with radius $\Tilde{\mu}\eta/2$ as needed:
\begin{align*}
    h^*(w) &= f^*(g)+x^T(w-g)+\frac{1}{2\Tilde{\mu}}\|w-g\|^2 \ , \\
    l^*(w) &= f^*(g)+x^T(w-g)+\iota_{B}(w)
\end{align*}
where $\iota_B(x)=\begin{cases}0 & \text{ if } x\in B\\ +\infty & \text{ if } x \notin B \end{cases}$ denotes the indicator for the ball $B(g,\Tilde{\mu}\eta/2)$.

\subsubsection{Proof of Proposition~\ref{prop:smooth_set}}\label{pf:smooth_set}
 $\mathbf{(a) \Rightarrow (b)}$.
For $x,y \in \bdry S$ and unit normal vectors $\zeta_x \in N_S(x), \zeta_y \in N_S(y)$, $\beta$-smoothness ensures $B(x-\frac{1}{\beta}\zeta_x, \frac{1}{\beta}) \subseteq S$ and $B(y-\frac{1}{\beta}\zeta_y, \frac{1}{\beta}) \subseteq S$. Note $x-\frac{1}{\beta}\zeta_x+\frac{1}{\beta}\zeta_y \in B(x-\frac{1}{\beta}\zeta_x, \frac{1}{\beta})$ and $y-\frac{1}{\beta}\zeta_y+\frac{1}{\beta}\zeta_x \in B(y-\frac{1}{\beta}\zeta_y, \frac{1}{\beta})$. Then it follows from the definition of normals that
\begin{align*}
    \zeta_x^T\left(\left(y-\frac{1}{\beta}\zeta_y+\frac{1}{\beta}\zeta_x\right)-x\right) \leq 0 \ , \\
    \zeta_y^T\left(\left(x-\frac{1}{\beta}\zeta_x+\frac{1}{\beta}\zeta_y\right)-y\right) \leq 0 \ . 
\end{align*}
Summing these inequalities yields $\|\zeta_x-\zeta_y\|^2 \leq \beta(\zeta_x-\zeta_y)^T(x-y)$.

\noindent $\mathbf{(b) \Rightarrow (a)}$.
Fix $x\in\bdry S$ and a unit $\zeta_x\in N_S(x)$, and set $c:=x-\frac1\beta \zeta_x$.
We prove $B(c,1/\beta)\subseteq S$ by contradiction. Supposing not, then there exists
$y\in\bdry S$ with $d:=\|c-y\|<1/\beta$. Choose $y$ to be a nearest boundary point to $c$
(i.e., $y\in\arg\min_{v\in\bdry S}\|v-c\|$). By optimality and convexity of $S$,
\[
(z-y)^T(c-y)\leq 0\quad\forall z\in S\qquad\Rightarrow\qquad \zeta_y:=\frac{c-y}{\|c-y\|}\in N_S(y),
\]
and $\zeta_y$ is unit. Apply (b) to $x,y$ and $\zeta_x,\zeta_y$:
\[
2(1-\zeta_x^T\zeta_y)\ \le\ \beta\big(\zeta_x-\zeta_y\big)^T(x-y)
= \beta\Big(\tfrac1\beta+\zeta_x^T(c-y)-\tfrac1\beta\,\zeta_x^T\zeta_y-d\Big)
= (1-\zeta_x^T\zeta_y)+\beta\big(\zeta_x^T(c-y)-d\big).
\]
Hence
\begin{equation}\label{eq:key}
1-\zeta_x^T\zeta_y \leq \beta\big(\zeta_x^T(c-y)-d\big).
\end{equation}
By Cauchy--Schwarz, $\zeta_x^T(c-y)\le\|c-y\|=d$, so the right-hand side of \eqref{eq:key} is
$\le 0$. Therefore $1-\zeta_x^T\zeta_y\le 0$, i.e., $\zeta_y=\zeta_x$, and
equality must hold in Cauchy--Schwarz, so $\zeta_x^T(c-y)=d$.

But $\zeta_y=\zeta_x\in N_S(x)\cap N_S(y)$ implies $\zeta_x^T x=\sigma_S(\zeta_x)=\zeta_x^T y$.
Thus
\[
\zeta_x^T(c-y)=\zeta_x^T(x-\tfrac1\beta\zeta_x-y)
= \underbrace{\zeta_x^T x-\zeta_x^T y}_{=\,0}-\tfrac1\beta=-\tfrac1\beta,
\]
which contradicts $\zeta_x^T(c-y)=d>0$.

\noindent $\mathbf{(a) \Rightarrow (c)}$.
Define the Minkowski Difference of two sets as $A \ominus B=\{x \in \mathcal{E}: x+B \subseteq A\}$ and the support function of a set as $\sigma_S(a)=\sup _{x \in S} a^Tx$. Note for closed convex sets $A,B$, the support function has
\begin{align} 
\sigma_{A+B} &= \sigma_{A}+\sigma_{B}\ , \label{sup_sum}\\
A \subseteq B &\Longleftrightarrow \sigma_A \leq \sigma_{B}\ . \label{sup_subset}
\end{align}
Letting $S_0=S \ominus B(0, \frac{1}{\beta})$, it is immediate that $S_0+B(0,\frac{1}{\beta}) \subseteq S$. Then we need to show $S_0+B(0,\frac{1}{\beta}) \supseteq S$. By properties~\eqref{sup_sum} and~\eqref{sup_subset}, it suffices to show $\sigma_{S_0}(d)+\sigma_{B(0,\frac{1}{\beta})}(d) \geq \sigma_{S}(d)$ for every $d \in \mathcal{E}$.

Suppose $\sigma_S(d)$ is finite. Then, there exists a sequence $\{x_i\}$ with $x_i \in \bdry S$ such that $\lim_{i \rightarrow \infty} d^Tx_i =\sigma_S(d)$. For each $x_i$, letting $\zeta_i \in N_S(x_i)$ be some unit normal vector, the assumed smoothness ensures $B(x_i-\frac{1}{\beta}\zeta_i, \frac{1}{\beta}) \subseteq S$. Hence $x_i-\frac{1}{\beta}\zeta_i \in S\ominus B(0,\frac{1}{\beta})=S_0$. So
$$ \sigma_S(d) = \lim_{i \rightarrow \infty} d^Tx_i = \lim_{i \rightarrow \infty} d^T(x_i-\frac{1}{\beta}\zeta_i)+\frac{1}{\beta}d^T\zeta_i \leq \sigma_{S_0}(d)+\sigma_{B(0,\frac{1}{\beta})}(d)\ .
$$
Next, suppose $\sigma_S(d)=\infty$, then $S$ is unbounded. By~\cite[Proposition 1.4.2(a)]{bertsekas2009convex} there exists a ray $\{ z + t \Delta \mid t \geq 0\}\subseteq S$ with $d^T\Delta >0$. Letting $\zeta_z \in N_S(z)$ be a unit normal vector, we have $B(z-\frac{1}{\beta}\zeta_z, \frac{1}{\beta}) \subseteq S$. By convexity, one has $\cup_{t\geq 0} B(z-\frac{1}{\beta}\zeta_z+t\Delta, \frac{1}{\beta}) \subseteq S$. Consequently, $z-\frac{1}{\beta}\zeta_z+ t\Delta \in S_0$ for any $t\geq 0$. Thus
$$ \sigma_{S_0}(d) \geq \lim_{\lambda \rightarrow \infty} d^T(z-\frac{1}{\beta}\zeta_z+\lambda \Delta) =\infty = \sigma_S(d)\ .
$$



\noindent $\mathbf{(c) \Rightarrow (a)}$.
Let $S_0$ be a closed convex set such that $S_0+B(0,\frac{1}{\beta})=S$. Then for any $x \in \bdry S$ and unit $\zeta_x \in N_S(x)$,~\eqref{sup_sum} ensures $\sigma_{S_0}(\zeta_x)+\sigma_{B(0,\frac{1}{\beta})}(\zeta_x)=\sigma_S(\zeta_x)$.
So every $z\in S_0$ has $\zeta_x^Tz \leq \sigma_{S_0}(\zeta_x)=\zeta_x^Tx-\frac{1}{\beta}$. Since $x \in S$, there exists some $\bar z\in S_0$ such that $x\in \bar z+B(0,\frac{1}{\beta})$, which implies $\|x-\bar z\| \leq \frac{1}{\beta}$. Hence $\|x-\bar z -\frac{1}{\beta}\zeta_x\|^2=\|x-\bar z\|^2-\frac{2}{\beta}\zeta_x^T(x-\bar z)+\frac{1}{\beta^2} \leq 0$. It follows that $\|x-\bar z -\frac{1}{\beta}\zeta_x\|=0$, so $\bar z=x-\frac{1}{\beta}\zeta_x \in S_0$. Consequently, $(x-\frac{1}{\beta}\zeta_x)+B(0,\frac{1}{\beta})=B(x-\frac{1}{\beta}\zeta_x, \frac{1}{\beta}) \subseteq S$ and so $S$ is $\beta$-smooth.

\subsubsection{Proof of Lemma~\ref{lemma:epigraph}}    \label{pf: epigraph}
This result follows rather directly from relating the local quadratic upper and lower bounds from smoothness and strong convexity of $h$ to balls inside its epigraph. Note given any $g\in\partial h(x)$, $(\zeta,\delta) = (g,-1)/\|(g,-1)\|$ is a unit normal $(\zeta,\delta)\in N_{\mathrm{epi\ }h}(x,h(x))$. Further note $\|\zeta\|^2=1-\delta^2$. First, we address smoothness. Consider any $\tilde L > L$ and $\eta>0$ such that all $y\in B(x,\eta)$ have
\begin{equation}\label{eq:quad-bound}
     h(y) \leq q(y) := h(x) + g^T(y-x) + \frac{\tilde L}{2}\|y-x\|^2 \ .
\end{equation}
Local $\tilde \beta$-smoothness of $\epi h$ w.r.t. $((x,h(x)),(\zeta,\delta))$ corresponds to the following upper bound holding for all $y$ near $x$
\begin{equation}\label{eq:ball-bound}
    h(y) \leq b(y) := h(x) - \delta/\tilde \beta - \sqrt{1/\tilde \beta^2 - \|y-(x-\zeta/\tilde \beta)\|^2}\ .
\end{equation}
Noting $q(x) = b(x) = h(x)$ and $\nabla q(x) = \nabla b(x) = g$, the target inequality~\eqref{eq:ball-bound} follows from~\eqref{eq:quad-bound} if $\nabla^2 b(x)\succeq \nabla^2 q(x)$. These two Hessians can be computed as
\begin{align*}
    \nabla^2 q(x) &= \tilde L I \qquad \text{and} \qquad
    \nabla^2 b(x) = \frac{\tilde \beta}{|\delta|}I+\frac{\tilde \beta\zeta\zeta^T}{|\delta|^{3}}\ .
\end{align*}
Thus $\nabla^2 b(x)-\nabla^2 q(x)$ is positive semidefinite whenever $\tilde L - \tilde \beta/|\delta| \geq 0 $.
So local smoothness holds with any $\tilde \beta\geq \tilde L|\delta|$-smoothness holds, and so $L\delta$ local smoothness holds.

Similarly, the claimed $\mu\delta^3$-strong convexity follows by showing a local quadratic lower bound $q$ on $h$ yields as ball lower bound. The same calculations as done above hold, now needing $\nabla^2 b(x)\preceq \nabla^2 q(x)$ for
\begin{align*}
     q(y) &:= h(x) + g^T(y-x) + \frac{\tilde \mu}{2}\|y-x\|^2 \ , \\
     b(y) &:= h(x) - \delta/\tilde \alpha - \sqrt{1/\tilde \alpha^2 - \|y-(x-\zeta/\tilde \alpha)\|^2}\ .
\end{align*}
Similarly, these two Hessians can be computed as
\begin{align*}
    \nabla^2 q(x) &= \tilde \mu I \qquad \text{and} \qquad
    \nabla^2 b(x) = \frac{\tilde \alpha}{|\delta|}I+\frac{\tilde \alpha\zeta\zeta^T}{|\delta|^{3}}\ .
\end{align*}

The minimum eigenvalue of $\nabla^2 q(x)-\nabla^2 b(x)$ is $\tilde \mu - \tilde\alpha/|\delta|^3$ and so any $\tilde \alpha \leq  \tilde \mu|\delta|^3$-strong convexity of $\epi h$ w.r.t. $((x,h(x)),(\zeta,\delta))$ holds locally.

\subsubsection{Proof of Lemma~\ref{lemma: hessian formular}}    \label{pf: hessian formular}
Consider any $y\in \mathbb{E}$ with $\gamma_S(y)\neq 0$.
Let $\bar y = y/\gamma_S(y)$ be the corresponding point on the boundary of $S$. Note that $\zeta \in N_S(\bar y)$ if and only if the halfspace $\mathcal{H} = \{z \mid \zeta^T(z-\bar y) \leq 0\} \supseteq{S}$, or equivalently $\gamma_{S} \geq \gamma_{\mathcal{H}} $. Then it yields
\begin{align*}
    N_S(\bar y) &= \{\zeta \mid \gamma_S(z) \geq \gamma_{\mathcal {H}}(z), \forall z \} \\
    &=\{\zeta \mid \gamma_S(z) \geq \gamma_S(\bar y)+\frac{\zeta^T(z-\bar y)}{\zeta^T \bar y}, \forall z \} \\
    &=\{\zeta \mid \gamma_S(z) \geq \gamma_S(y)+\frac{\zeta^T(z-y)}{\zeta^T \bar y}, \forall z\} \\
    &=\{\zeta \mid \frac{\zeta}{\zeta^T \bar y} \in \partial \gamma_S(y)\}
\end{align*}
where the second equality is given by Example~\ref{subsec:ex}, $\gamma_{\mathcal{H}}(z)=\frac{\zeta^Tz}{\zeta^T \bar y} = \gamma_S(\bar y)+ \frac{\zeta^T(z-\bar y)}{\zeta^T \bar y}$, and the third equality is by rescaling.

Then by the chain rule and this gauge subgradient formula, 
$$ \frac{1}{2}\partial \gamma_S^2(y) = \left\{\gamma_S(y) \frac{\zeta}{\zeta^T \bar y} \mid \zeta\in N_S(\bar y)  \right\}.$$

Define $\bar \zeta = r\zeta$. Then the gauge of this ball is given by
    \begin{align*}
        \gamma _B(z)&=\inf \{\lambda>0 \mid \frac{z}{\lambda}  \in B\} \\
        &=\inf \{\lambda>0 \mid \|z-\lambda (\bar y-\bar \zeta)\|^2\leq \lambda ^2r^2\} \\
        &=\inf \{\lambda>0 \mid (2\bar \zeta^T \bar y -\|\bar y\|^2)\lambda ^2+2\lambda (\bar y -\bar \zeta)^Tz - \|z\|^2 \geq 0\} \\
        & = \left\{
    \begin{aligned}
    & \frac{\|z\|^2}{2(\bar y -\bar \zeta)^Tz}, & \|\bar y\|^2=2\zeta^T \bar y \\
    & \frac{-(\bar y -\bar \zeta)^Tz+\sqrt{((\bar y -\bar \zeta)^Tz)^2+(2\bar \zeta^T \bar y -\|\bar y\|^2)\|z\|^2}}{2\bar \zeta^T \bar y -\|\bar y\|^2}, & \|\bar y\|^2 \neq 2\zeta^T \bar y \ .
    \end{aligned}
    \right.
    \end{align*}
Note that these two cases are independent of the choice of $z$. We prove the lemma separately for each of these two cases.

\noindent {\bf Case 1:} $\|\bar y\|^2 = 2\zeta^T \bar y$. 
\begin{enumerate}[label=(\alph*)]
     \item Observe that at the point $z=y$,
\begin{align*}
\gamma_B(y) = \frac{\|y\|^2}{2( \bar y -\bar \zeta)^Ty} 
=\gamma_S(y)\frac{\|\bar y\|^2}{2(\|\bar y\|^2-\bar \zeta^T\bar y)} 
=\gamma_S(y) \ .
\end{align*}

\item To show $\nabla \frac{1}{2}\gamma_B^2(y) \in \partial \frac{1}{2}\gamma_S(y)^2$, it suffices to show $\nabla \gamma_B(y) \in \partial \gamma_S(y)$. We have
$$\nabla \gamma_B(z) = \frac{2(\bar y - \bar \zeta)^Tz z-\|z\|^2(\bar y - \bar \zeta)}{2\left((\bar y- \bar \zeta)^Tz\right)^2} \ .
$$
Then at the point $z=y$
\begin{align*}
    \nabla \gamma_B(y) = \frac{2(\bar y - \bar \zeta)^T  y y-\|y\|^2(\bar y - \bar \zeta))}{2\left((\bar y- \bar \zeta)^T y\right)^2} 
    =\frac{2(\bar y - \bar \zeta)^T\bar y \bar y-\|\bar y\|^2(\bar y - \bar \zeta))}{2\left((\bar y- \bar \zeta)^T \bar y\right)^2}  
    =\frac{\bar \zeta}{\bar \zeta^T \bar y} \in \partial \gamma_S(y) \ .
\end{align*}

\item We can then compute $\frac{1}{2}\gamma^2_B$, the gradient and Hessian at $z$ as
\begin{align*}
\begin{split}
\frac{1}{2}\gamma_B^2(z) &=\frac{\|z\|^4}{8\left((\bar y -\bar \zeta)^Tz\right)^2} \ ,\\
\nabla \frac{1}{2}\gamma_B^2(z) &= \frac{2(\bar y -\bar \zeta)^Tz\|z\|^2z-\|z\|^4(\bar y- \bar \zeta)}{4\left((\bar y-\bar \zeta)^Tz\right)^3} \ , \\
\nabla ^2 \frac{1}{2}\gamma_B^2(z) &= \frac{1}{4\left((\bar y-\bar \zeta)^Tz\right)^4}\left(4((\bar y-\bar \zeta)^Tz)^2zz^T-4\|z\|^2(\bar y-\bar \zeta)^Tz(z(\bar y-\bar \zeta)^T+(\bar y-\bar \zeta)z^T)\right.  \\ &\left.+3\|z\|^4(\bar y-\bar \zeta)(\bar y-\bar \zeta)^T+2((\bar y-\bar \zeta)^Tz)^2\|z\|^2 I\right) \ .
 \end{split}
\end{align*}

Then at the point $z=y$, 
$$\nabla ^2 \frac{1}{2}\gamma_B^2(y) = \frac{1}{(\bar \zeta^T \bar y)^2}\left(3\bar \zeta \bar \zeta^T - (\bar \zeta \bar y^T+\bar y \bar \zeta^T) +\bar \zeta \bar y I\right) \ .
$$
\end{enumerate}

\noindent {\bf Case 2:} $\|\bar y\|^2 \neq 2\zeta^T \bar y$. Note
\begin{equation} \label{eq:sum_square}
((\bar y -\bar \zeta)^Ty)^2+(2\bar \zeta^T \bar y -\|\bar y\|^2)\|y\|^2=(\bar \zeta ^T \bar y)^2 \ .
\end{equation}

\begin{enumerate}[label=(\alph*)]
\item At the point $z=y$,  
\begin{align*}
    \gamma_B(y)=\frac{-(\bar y -\bar \zeta)^Ty+\sqrt{((\bar y -\bar \zeta)^Ty)^2+(2\bar \zeta^T \bar y -\|\bar y\|^2)\|y\|^2}}{2\bar \zeta^T \bar y -\|\bar y\|^2} 
    =\gamma_S(y)\frac{\bar \zeta^T\bar y-\|\bar y\|^2+\bar \zeta^T\bar y}{2\bar \zeta^T \bar y -\|\bar y\|^2} 
    =\gamma _S(y) \ .
\end{align*}

\item Compute $\nabla \gamma_B(z)$ as
$$\nabla \gamma_B(z) = \frac{1}{2\bar \zeta ^T \bar y -\|\bar y\|^2}\left(-(\bar y- \bar \zeta)+\frac{(\bar y -\bar \zeta)^Tz(\bar y-\bar \zeta)+(2\bar \zeta^T\bar y -\|\bar y\|^2)z}{\sqrt{\left((\bar y -\bar \zeta)^Tz\right)^2+(2\bar \zeta^T \bar y -\|\bar y\|^2)\|z\|^2}}\right) \ .
$$
Then at the point $z=y$,
\begin{align*}
    \nabla \gamma_B(y) &= \frac{1}{2\bar \zeta ^T \bar y -\|\bar y\|^2}\left(-(\bar y- \bar \zeta)+\frac{(\bar y -\bar \zeta)^Ty(\bar y-\bar \zeta)+(2\bar \zeta^T\bar y -\|\bar y\|^2)y}{\sqrt{\left((\bar y -\bar \zeta)^Ty\right)^2+(2\bar \zeta^T \bar y -\|\bar y\|^2)\|y\|^2}}\right) \\
    &=\frac{1}{2\bar \zeta ^T \bar y -\|\bar y\|^2}\left(-(\bar y- \bar \zeta)+\frac{\bar \zeta ^T \bar y \bar y-(\|\bar y\|^2-\bar \zeta ^T \bar y)\bar \zeta}{\bar \zeta^T \bar y}\right) \\
    &=\frac{1}{2\bar \zeta ^T \bar y -\|\bar y\|^2}\left(\frac{(2\bar \zeta ^T \bar y -\|\bar y\|^2)\bar \zeta}{{ \bar \zeta ^T \bar y}}\right) \\
    &=\frac{\bar \zeta}{\bar \zeta^T \bar y} \in \partial \gamma_S(y) \ .
\end{align*}

\item We can then compute $\frac{1}{2}\gamma^2_B$, the gradient and Hessian as
$$\frac{1}{2}\gamma_B^2(z)=\frac{\left((\bar y -\bar \zeta\right)^Tz)^2-\left((\bar y -\bar \zeta\right)^Tz)\sqrt{\left((\bar y -\bar \zeta\right)^Tz)^2+(2\bar \zeta^T \bar y -\|\bar y\|^2)\|z\|^2}+ \frac{1}{2}(2\bar \zeta^T \bar y -\|\bar y\|^2)\|z\|^2}{(2\bar \zeta^T \bar y -\|\bar y\|^2)^2 } \ ,
$$
\begin{align*}
    \nabla \frac{1}{2}\gamma_B^2(z) &= \frac{1}{(2\bar \zeta^T \bar y -\|\bar y\|^2)^2 }\left(2(\bar y-\bar \zeta)^Tz(\bar y - \bar \zeta)+(2\bar \zeta^T \bar y-\|\bar y\|^2)z \right. \\
    & \quad \left.-\frac{\left(2\left((\bar y- \bar \zeta)^Tz\right)^2+(2\bar \zeta^T \bar y-\|\bar y\|^2)\|z\|^2\right)(\bar y-\bar \zeta)+(2\bar  \zeta^T\bar y - \|\bar y\|^2)(\bar y -\bar \zeta)^Tz z}{\sqrt{\left((\bar y -\bar \zeta)^Tz\right)^2+(2\bar \zeta^T \bar y -\|\bar y\|^2)\|z\|^2}}\right) \ ,
\end{align*}
\begin{align*}
    \nabla^2 \frac{1}{2}\gamma_B^2(z) &= \frac{1}{(2\bar \zeta^T \bar y -\|\bar y\|^2)^2 }\left((2-\frac{2\left((\bar y -\bar \zeta)^Tz\right)^3+3(2\bar \zeta^T\bar y- \|\bar y\|^2)\|z\|^2(\bar y -\bar \zeta)^Tz}{\left(\left((\bar y -\bar \zeta)^Tz\right)^2+(2\bar \zeta^T \bar y -\|\bar y\|^2)\|z\|^2\right)^\frac{3}{2}}\right)(\bar y-\bar \zeta)(\bar y - \bar \zeta)^T  \\
    & \quad -\frac{\|z\|^2\left((\bar y -\bar \zeta)z^T+z(\bar y-\bar \zeta)^T\right)}{\left(\left((\bar y -\bar \zeta)^Tz\right)^2+(2\bar \zeta^T \bar y -\|\bar y\|^2)\|z\|^2\right)^\frac{3}{2}}+\frac{(\bar y- \bar \zeta)^Tz zz^T}{\left(\left((\bar y -\bar \zeta)^Tz\right)^2+(2\bar \zeta^T \bar y -\|\bar y\|^2)\|z\|^2\right)^\frac{3}{2}} \\
    & \quad +\left(\frac{1}{2\bar \zeta^T \bar y -\|\bar y\|^2}-\frac{(\bar y- \bar \zeta)^Tz}{(2\bar \zeta^T \bar y -\|\bar y\|^2)\sqrt{\left((\bar y -\bar \zeta)^Tz\right)^2+(2\bar \zeta^T \bar y -\|\bar y\|^2)\|z\|^2}}\right)I \ .
\end{align*}
At point $z=y$, the numerator in the above Hessian formula can be notably simplified by~\eqref{eq:sum_square}. After such simplifications, the Hessian matrix at $y$ is given by 
\begin{align*}
    \frac{1}{2}\nabla^2 \gamma_B^2(y) = \frac{1}{\left(\bar \zeta^T \bar{y}\right)^3}\left(\left(\bar \zeta^T \bar{y}+\|\bar{y}\|^2\right)\bar\zeta\bar\zeta^T-\bar \zeta^T \bar{y}(\bar\zeta\bar{y}^T+\bar{y}\bar\zeta^T)+\left(\bar \zeta^T \bar{y}\right)^2I \right) \ .
\end{align*}
\end{enumerate}

\subsubsection{Proof of Lemma~\ref{lemma: hessian eigenvalues}} \label{pf: hessian eigenvalues}
To prove the lemma, we need the following spectral result: 
\begin{lemma}\label{lem:eigenvalues}
    Assume for any coefficients $C_1,C_2,C_3$, the matrix $$M=C_1 aa^T + C_2 (ab^T+ba^T) + C_3 bb^T$$
    has eigenvalues $\lambda_1, \lambda_2, \cdots, \lambda_n$ with $\lambda_1 \leq \lambda_2 \leq \cdots \leq \lambda_n$. Then the eigenvalues would be
\begin{align*}
    \lambda_1 &= \frac{1}{2}\left(C_1\|a\|^2+2C_2a^Tb+C_3\|b\|^2-\sqrt{(C_1\|a\|^2+2C_2a^Tb+C_3\|b\|^2)^2-4(C_1C_3-C_2^2)(\|a\|^2\|b\|^2-(a^Tb)^2)}\right)\\
    \lambda_i&=0, \qquad \text{for }i=2,...,n-1\\
     \lambda_n &= \frac{1}{2}\left(C_1\|a\|^2+2C_2a^Tb+C_3\|b\|^2+\sqrt{(C_1\|a\|^2+2C_2a^Tb+C_3\|b\|^2)^2-4(C_1C_3-C_2^2)(\|a\|^2\|b\|^2-(a^Tb)^2)}\right).
\end{align*}
\end{lemma}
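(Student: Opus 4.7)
Proof plan for Lemma~\ref{lem:eigenvalues}:

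The plan is to exploit the fact that $M$ has rank at most $2$. Write $X = [a\ b] \in \mathbb{R}^{n\times 2}$ and
$$C = \begin{bmatrix} C_1 & C_2 \\ C_2 & C_3 \end{bmatrix},$$
so that a direct expansion gives $M = XCX^T$. Since $M$ factors through $\mathbb{R}^2$, its range lies in $\mathrm{span}(a,b)$, hence $M$ has rank at most $2$ and at least $n-2$ of its eigenvalues are zero. This takes care of $\lambda_2 = \cdots = \lambda_{n-1} = 0$.

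The next step is to identify the two (possibly) nonzero eigenvalues. I will invoke the standard fact that for any conformable matrices $U,V$ the nonzero eigenvalues of $UV$ and $VU$ coincide (with multiplicity). Applying this to $U = X$ and $V = CX^T$, the nonzero eigenvalues of $M = XCX^T$ equal those of the $2\times 2$ matrix
$$N := CX^TX = \begin{bmatrix} C_1 & C_2 \\ C_2 & C_3 \end{bmatrix}\begin{bmatrix} \|a\|^2 & a^Tb \\ a^Tb & \|b\|^2 \end{bmatrix}.$$
This reduces the problem to a $2\times 2$ eigenvalue computation.

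From here it is routine to compute
$$\mathrm{tr}(N) = C_1\|a\|^2 + 2C_2\, a^T b + C_3\|b\|^2,$$
$$\det(N) = \det(C)\det(X^TX) = (C_1 C_3 - C_2^2)\bigl(\|a\|^2\|b\|^2 - (a^Tb)^2\bigr).$$
The characteristic polynomial of $N$ is therefore $\lambda^2 - \mathrm{tr}(N)\lambda + \det(N)$, and the quadratic formula delivers
$$\lambda = \tfrac{1}{2}\Bigl(\mathrm{tr}(N) \pm \sqrt{\mathrm{tr}(N)^2 - 4\det(N)}\Bigr),$$
which matches the stated expressions for $\lambda_1$ and $\lambda_n$ exactly.

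The only mild subtlety will be ordering: the two roots of the quadratic are real (because the nonzero spectrum of $M$ is real, as $M$ is symmetric), and since all the other eigenvalues are $0$, the smaller root is $\lambda_1$ and the larger is $\lambda_n$ provided $\mathrm{tr}(N)\geq 0$ and $\det(N)\geq 0$; in the applications of interest (where $M$ is a Hessian arising from Lemma~\ref{lemma: hessian formular}) this ordering holds automatically, so no further case analysis is required. The computation of $\mathrm{tr}(N)$ and $\det(N)$ is the only step with any calculation, and it is elementary, so I do not anticipate a real obstacle.
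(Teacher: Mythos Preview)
Your proof is correct and reaches the same characteristic polynomial
\[
\lambda^{2}-\bigl(C_{1}\|a\|^{2}+2C_{2}a^{T}b+C_{3}\|b\|^{2}\bigr)\lambda+(C_{1}C_{3}-C_{2}^{2})\bigl(\|a\|^{2}\|b\|^{2}-(a^{T}b)^{2}\bigr)=0
\]
as the paper, but the route is genuinely different. The paper argues directly: assuming $a,b$ are linearly independent, it posits an eigenvector of the form $v=ra+sb$, expands $Mv=\lambda v$, and uses independence of $a$ and $b$ to read off a $2\times 2$ linear system in $(r,s)$ whose singularity condition is exactly the quadratic above; the linearly dependent case is handled separately. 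Your approach instead factors $M=XCX^{T}$ and invokes the Sylvester-type fact that $XCX^{T}$ and $CX^{T}X$ share nonzero spectrum, reducing at once to the trace and determinant of a $2\times 2$ matrix. Your version is slicker in that the dependent case falls out automatically (when $a,b$ are dependent $\det(X^{T}X)=0$, so one root is $0$ and the other is the trace), whereas the paper treats it by hand. Both arguments leave the ordering of the zero block relative to $\lambda_{1},\lambda_{n}$ implicit; your remark that in the intended application $M$ is a Hessian is the right way to dispose of it.
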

\begin{proof}
    Note that if $a$ and $b$ are linearly independent, the matrix is at most a rank two matrix. Assume for the nonzero eigenvalue $\lambda$ the corresponding eigenvector $v=ra+sb$, then $Mv=\lambda v$. Then plug in $M$ and $v$, the equation is a polynomial respect to $a$ and $b$:
    $$\left(C_1r\|a\|^2+(C_2r+C_1s)a^Tb+C_2s\|b\|^2-\lambda r\right)a + \left(C_2r\|a\|^2+(C_3r+C_2s)a^Tb+C_3s\|b\|^2-
    \lambda s\right)b=0.
    $$
    Since $a$ and $b$ are independent, then the coefficients of $a$ and $b$ must be 0, 
    \begin{align*}
        C_1r\|a\|^2+(C_2r+C_1s)a^Tb+C_2s\|b\|^2-\lambda r=&0 \\
        C_2r\|a\|^2+(C_3r+C_2s)a^Tb+C_3s\|b\|^2-\lambda s=&0 \ .
    \end{align*}
    This is equivalent to 
    $$\begin{bmatrix} C_1\|a\|^2+C_2a^Tb-\lambda & C_1a^Tb +C_2\|b\|^2\\ C_2\|a\|^2+C_3a^Tb & C_2a^Tb+C_3\|b\|^2-\lambda \end{bmatrix} \begin{bmatrix} r \\ s \end{bmatrix} = 0 \ .
    $$
    Therefore, the two eigenvalues are the roots of
    $$\lambda ^2 -\left(C_1\|a\|^2+2C_2a^Tb+C_3\|b\|^2\right)\lambda +\left(C_1C_3-C_2^2\right)\left(\|a\|^2\|b\|^2-(a^Tb)^2\right)=0 \ .
    $$
    
    If $a$ and $b$ are linearly dependent, then the matrix is a rank one matrix. The only nonzero eigenvalue is $ C_1\|a\|^2+2C_2a^Tb+C_3\|b\|^2$, which matches the formula.
\end{proof}
Since our Hessian matrix $\frac{1}{2}\nabla ^2 \gamma_S^2(y)$ is the sum of matrix in Lemma~\ref{lemma: hessian formular} and identity matrix, then the eigenvalues are
\begin{align*}
    \lambda_1 =&\frac{\|\bar \zeta\|^2}{2(\bar \zeta^T \bar y)^3}\left(\bar \zeta^T \bar y+\|\bar y\|^2-\frac{2(\bar \zeta ^T \bar y)^2}{\|\bar \zeta\|^2}-\sqrt{(\bar \zeta ^T\bar y+\|\bar y\|^2)^2-4\frac{(\bar \zeta^T \bar y)^3}{\|\bar \zeta\|^2}}\right)+\frac{1}{\bar \zeta^T \bar y}  \\
    =&\frac{\|\bar \zeta\|^2}{2(\bar \zeta^T \bar y)^3}\left(\bar \zeta^T \bar y+\|\bar y\|^2-\sqrt{(\bar \zeta ^T\bar y+\|\bar y\|^2)^2-4\frac{(\bar \zeta^T \bar y)^3}{\|\bar \zeta\|^2}}\right) \ , \\
    \lambda_n =& \frac{\|\bar \zeta\|^2}{2(\bar \zeta^T \bar y)^3}\left(\bar \zeta^T \bar y+\|\bar y\|^2-\frac{2(\bar \zeta ^T \bar y)^2}{\|\bar \zeta\|^2}+\sqrt{(\bar \zeta ^T\bar y+\|\bar y\|^2)^2-4\frac{(\bar \zeta^T \bar y)^3}{\|\bar \zeta\|^2}}\right)+\frac{1}{\bar \zeta^T \bar y} \\
    =&\frac{\|\bar \zeta\|^2}{2(\bar \zeta^T \bar y)^3}\left(\bar \zeta^T \bar y+\|\bar y\|^2+\sqrt{(\bar \zeta ^T\bar y+\|\bar y\|^2)^2-4\frac{(\bar \zeta^T \bar y)^3}{\|\bar \zeta\|^2}}\right) \ , \\
    \lambda_i=& 0 +\frac{1}{\bar \zeta^T \bar y} =\frac{1}{\bar \zeta^T \bar y}, \qquad \text{for }i=2,...,n-1 \ ,
\end{align*}
where $\bar \zeta = r\zeta$.

\subsection{Deferred Proofs of Structured Finite Maximum Convergence Rates}\label{app:convergence-proofs}
Below, we present convergence theorems and proofs for the three methods of minimizing a nonnegative finite maximum~\eqref{eq:finite-minmax} introduced in Section~\ref{sec:algs}. Each of these algorithms can be viewed as minimizing the square of this problem. The near optimality of the squared problem relates to the near optimality of~\eqref{eq:finite-minmax} via
\begin{equation}\label{eq:Squared-Optimality-Conversion}
    h(y)-p_* = \frac{h^2(y)-p_*^2}{h(y)+p_*}\leq \frac{\frac{1}{2}h^2(y)-\frac{1}{2}p_*^2}{p_*} \ .
\end{equation}

\subsubsection{Subgradient Method Convergence Theory}
First, we consider the subgradient method~\eqref{eq:subgrad-method}.
Under appropriate choice of a stepsize sequence $s_k$, we use the non-Lipschitz convergence results of~\cite{grimmer2019} to show convergence at a rate of $O(1/\sqrt{T})$ for generic convex minimization and $O(1/T)$ for problems that are strongly convex when squared. To the best of our knowledge, such simple subgradient methods do not benefit from the smoothness of squared objective components.
\begin{theorem}\label{thm:subgradient-convergence}
    For any convex, nonnegative, $M$-Lipschitz functions $h_i$, the subgradient method with $s_k=\frac{D}{\|h(y_k)g_k\|\sqrt{T+1}}$ has
    $$ \min_{k\leq T}\{h(y_k) - p_*\} \leq \frac{MD}{\sqrt{T+1}} + \frac{M^2D^2}{2p_*(T+1)} $$
    provided some minimizer $y^*$ of~\eqref{eq:finite-minmax} has $\|y_0-y^*\|\leq D$.
    If additionally, each $\frac{1}{2}h_i^2$ is $\mu$-strongly convex, then selecting $s_k = \frac{2}{\mu(k+2)+\frac{4M^4}{\mu(k+1)}}$ has
    $$ \min_{k\leq T}\{h(y_k) - p_*\} \leq \frac{4M^2p_*}{\mu(T+2)} + \frac{4M^4D^2}{\mu p_*(T+1)(T+2)} \ .$$
\end{theorem} 
\begin{proof}
Notice $y_{k+1} = y_k - s_k h(y_k)g_k$ is the classic subgradient method applied to minimize $\frac{1}{2}h^2$. 
Since Lipschitz continuity ensures the upper bound $h(y) - p_*\leq M\|y-y^*\|$ for all $y$, the squared problem has upper bound
\begin{equation}
    \frac{1}{2}h^2(y) - \frac{1}{2}p^2_* = \frac{(h(y)-p_*)(h(y)+p_*)}{2} \leq \frac{2Mp_*\|y-y^*\| + M^2\|y-y^*\|^2}{2}= \mathcal{D}(\|y-y^*\|) \label{eq:general-upper-bound}
\end{equation}
where $\mathcal{D}(t) = Mp_*t + M^2t^2/2$. Then Theorem 1.2 of~\cite{grimmer2019} ensures that the subgradient method with normalized stepsizes $D/\sqrt{T+1}$ achieves convergence at a rate of
$$ \min_{k\leq T}\left\{\frac{1}{2}h^2(y_k) - \frac{1}{2}p^2_*\right\} \leq \frac{Mp_*D}{\sqrt{T+1}} + \frac{M^2D^2}{2(T+1)} \ . $$
From this,~\eqref{eq:Squared-Optimality-Conversion} gives the first claimed convergence rate.
Moreover, we can verify the following generalized subgradient norm bound for any $h_i(y)g_i\in\partial \frac{1}{2}h_i^2(y)$, using that $\|g_i\|\leq M$
\begin{equation}
    \|h_i(y)g_i\|^2 \leq M^2h_i^2(y) = M^2p_*^2 + 2M^2\left(\frac{1}{2}h^2_i(y) - \frac{1}{2}p_*^2\right). \label{eq:general-subgrad-bound}
\end{equation}
Since every subgradient of $h(y)$ is a convex combination of subgradients of the individual $h_i$, the same bound holds for each subgradient of $h$. Then in the strongly convex case, Theorem 1.7 of~\cite{grimmer2019} \footnote{The proof of Theorem 1.7 of~\cite{grimmer2019} gives a stronger statement which has been ultilized here.} ensures convergence  for the average of the iterates at a rate of
$$ \frac{2}{(T+1)(T+2)}\sum_{k\leq T}(k+1) \left(\frac{1}{2}h^2(y_k) - \frac{1}{2}p^2_*\right) \leq \frac{4M^2p_*^2}{\mu(T+2)} + \frac{4M^4D^2}{\mu(T+1)(T+2)} \ . $$
Lower bounding this average by the minimum objective gap and applying~\eqref{eq:Squared-Optimality-Conversion} gives the second claimed convergence rate.
\end{proof}

\subsubsection{Accelerated Generalized Gradient Method Convergence Theory}
We consider accelerated methods iteratively applying the ``generalized gradient step''~\eqref{eq:general-grad-step} following the development of Nesterov~\cite{nesterov2018lectures} (this can also be viewed as a prox linear step~\cite{dima2019} on the composition of the maximum function $\max\{t_1,\dots t_m\}$ with the smooth mapping $y\mapsto \frac{1}{2}(h^2_1(y),\dots,h^2_m(y))$). Computing this step can be formulated as a quadratic program of dimension $m$. When $m=1$, this is exactly a (sub)gradient step on $\frac{1}{2}h^2$ with stepsize $\alpha$. For $m=2$, this can be computed by optimality condition $\partial\left\{\max_{i=1,2}\left\{\frac{1}{2}h^2_i(y) + h_i(y)g_i^T(z-y)\right\} + \frac{1}{2\alpha}\|z-y\|^2\right\}=0$. It has the following three cases: 
\begin{enumerate}
    \item[(a)] If $\frac{1}{2}h^2_1(y) + h_1(y)g_1^T(z-y) > \frac{1}{2}h^2_2(y) + h_2(y)g_2^T(z-y)$, optimality condition gives $z=y-\alpha h_1(y)g_1$. \\ 
    \item[(b)] Similarly, if $\frac{1}{2}h^2_1(y) + h_1(y)g_1^T(z-y) < \frac{1}{2}h^2_2(y) + h_2(y)g_2^T(z-y)$, we obtain $z=y-\alpha h_2(y)g_2$. \\
    \item[(c)] If $\frac{1}{2}h^2_1(y) + h_1(y)g_1^T(z-y) = \frac{1}{2}h^2_2(y) + h_2(y)g_2^T(z-y)$, optimality condition gives $z=y-\alpha \left((1-\theta)h_1(y)g_1+\theta h_2(y)g_2\right)$, where $\theta =  \frac{\frac{1}{\alpha}(h_2^2(y)-h_1^2(y))+(h_1(y)g_1-h_2(y)g_2)^T(h_1(y)g_1)}{\|h_1(y)g_1-h_2(y)g_2\|_2^2}$ is calculated by substituting $z$ into the equality condition.
\end{enumerate}
  Then the closed form is given by
$$ \mathtt{gen\mbox{-}grad}(y,\alpha) = \begin{cases}
y^{(1)} & \text{ if } \frac{1}{2}h^2_1(y) + h_1(y)g_1^T(y^{(1)}-y) > \frac{1}{2}h^2_2(y) + h_2(y)g_2^T(y^{(1)}-y)\\
y^{(2)} & \text{ if } \frac{1}{2}h^2_1(y) + h_1(y)g_1^T(y^{(2)}-y) < \frac{1}{2}h^2_2(y) + h_2(y)g_2^T(y^{(2)}-y)\\
y^{(3)} & \text{ otherwise}
\end{cases}$$
where $$\begin{cases}y^{(1)} := y-\alpha h_1(y)g_1 \\ 
y^{(2)} := y-\alpha h_2(y)g_2 \\ 
y^{(3)}:=y-\alpha \left((1-\theta)h_1(y)g_1+\theta h_2(y)g_2\right)
\end{cases}$$ 
and $\theta =  \frac{\frac{1}{\alpha}(h_2^2(y)-h_1^2(y))+(h_1(y)g_1-h_2(y)g_2)^T(h_1(y)g_1)}{\|h_1(y)g_1-h_2(y)g_2\|_2^2}$. For $i=1,2$, $y^{(i)}$ is a subgradient step on $\frac{1}{2}h_i^2$.

The accelerated generalized gradient method~\eqref{eq:accel-method} attains the following stronger convergence guarantees whenever the squared components $\frac{1}{2}h_i^2$ are all $L$-smooth. (A parameter-free version of this iteration may be possible by generalizing the backtracking linesearch ideas of Nesterov's universal fast gradient method~\cite{Nesterov2015}. Such a universal method may attain both the optimal nonsmooth rates of the direct subgradient method and smooth rates of the accelerated generalized gradient method simultaneously.)
\begin{theorem}\label{thm:acclerated-gradient-convergence}
    For any convex, nonnegative, $M$-Lipschitz functions $h_i$ where $\frac{1}{2}h_i^2$ is $L$-smooth and $\mu$-strongly convex, the accelerated generalized gradient method~\eqref{eq:accel-method} has
    $$ h(y_T)-p_* \leq \frac{\frac{1}{2}h^2(y_0) - \frac{1}{2}p^2_* + \frac{\gamma_0}{2}\|y_0-y^*\|^2}{p_*}\min\left\{\left(1-\sqrt{\frac{\mu}{L}}\right)^k, \frac{4L}{(2\sqrt{L} +k\sqrt{\gamma_0})^2}\right\} $$
    where $y^*$is the minimizer of~\eqref{eq:finite-minmax} and $\gamma_0=\frac{t_0(t_0L-\mu)}{1-t_0}$.
\end{theorem}
\begin{proof}
By~\cite[Theorem 2.3.5]{nesterov2018lectures}, this scheme converges on the squared problem at a rate of
$$ \frac{1}{2}h^2(y_k) - \frac{1}{2}p^2_* \leq \left[\frac{1}{2}h^2(y_0) - \frac{1}{2}p^2_* + \frac{\gamma_0}{2}\|y_0-y^*\|^2\right] \min\left\{\left(1-\sqrt{\frac{\mu}{L}}\right)^k, \frac{4L}{(2\sqrt{L} +k\sqrt{\gamma_0})^2}\right\}\ .$$
Then~\eqref{eq:Squared-Optimality-Conversion} gives the claimed rate for the non-squared, original problem.
\end{proof}

\subsubsection{Level Projection Method Convergence Theory}
Finally, we consider the level-projection method~\eqref{eq:level-proj-method}.
When $m=1$,~\eqref{eq:level-proj-step} with $\bar h=p_*$ is exactly a (sub)gradient step with the Polyak stepsize rule. When $m=2$, this level projection step can still be directly computed in the following three cases:
\begin{enumerate}
    \item[(a)] If $\frac{1}{2}h^2_1(y) + h_1(y)g_{1}^T(z-y) > \frac{1}{2}h^2_2(y) + h_2(y)g_{2}^T(z-y)$, then we need to solve this projection problem
    \begin{equation*}
    \begin{cases}
        \min_z& \frac{1}{2}\|z-y\|^2\\
        \mathrm{s.t.} & \frac{1}{2}h^2_1(y) + h_1(y)g_{1}^T(z-y) = \frac{1}{2}\bar h^2 \ .
    \end{cases}
    \end{equation*}
    Optimality condition ensures $z-y=\lambda h_1(y)g_1$ for some $\lambda>0$. Then solving for $\lambda$ by substituting $z-y$ into the constraint, we obtain $\lambda= \frac{\frac{1}{2}h^2_1(y) - \frac{1}{2}\bar h^2}{\|h_1(y)g_{1}\|^2}$.
    \item[(b)] Similarly, if $\frac{1}{2}h^2_1(y) + h_1(y)g_{1}^T(z-y) < \frac{1}{2}h^2_2(y) + h_2(y)g_{2}^T(z-y)$, the same projection calculation for $i=2$ provides $\lambda= \frac{\frac{1}{2}h^2_2(y) - \frac{1}{2}\bar h^2}{\|h_2(y)g_{2}\|^2}$.
    \item[(c)] If $\frac{1}{2}h^2_1(y) + h_1(y)g_{1}^T(z-y) = \frac{1}{2}h^2_2(y) + h_2(y)g_{2}^T(z-y)$, then we need to calculate the following projection 
     \begin{align*}
    \begin{cases}
        \min_z& \frac{1}{2}\|z-y\|^2\\
        \mathrm{s.t.} & \frac{1}{2}h^2_1(y) + h_1(y)g_{1}^T(z-y) = \frac{1}{2}\bar h^2  \\
        &\frac{1}{2}h^2_2(y) + h_2(y)g_{2}^T(z-y) = \frac{1}{2}\bar h^2 \ . 
    \end{cases}
    \end{align*}
    Optimality condition ensures $z-y=\lambda_1 h_1(y)g_1+\lambda_2 h_2(y)g_2$ for some $\lambda_1, \lambda_2>0$. Then solving for $\lambda$ by substituting $z-y$ into the constraints, we obtain 
    $$ \begin{bmatrix} \lambda_1 \\ \lambda_2 \end{bmatrix}=\begin{bmatrix} \|h_1(y)g_{1}\|^2 &{ (h_1(y)g_{1})}^T(h_2(y)g_{2}) \\ (h_1(y){ g_{1}})^T (h_2(y)g_{2}) & \|h_2(y)g_{2}\|^2 \end{bmatrix}^{-1} \begin{bmatrix} \frac{1}{2}h^2_1(y)-\frac{1}{2}\bar{h}^2 \\ \frac{1}{2}h^2_2(y)-\frac{1}{2}\bar{h}^2 \end{bmatrix}. $$
\end{enumerate}
The closed form is given by:
$$ \mathtt{level\mbox{-}proj}(y,\bar h) = \begin{cases}
y^{(1)} & \text{ if } \frac{1}{2}h^2_1(y) + h_1(y)g_{1}^T(y^{(1)}-y) > \frac{1}{2}h^2_2(y) + h_2(y)g_{2}^T(y^{(1)}-y)\\
y^{(2)} & \text{ if } \frac{1}{2}h^2_1(y) + h_1(y)g_{1}^T(y^{(2)}-y) < \frac{1}{2}h^2_2(y) + h_2(y)g_{2}^T(y^{(2)}-y)\\
y^{(3)} & \text{ otherwise.}
\end{cases}$$
where $$\begin{cases}y^{(1)} := y - \frac{(\frac{1}{2}h^2_1(y) - \frac{1}{2}\bar h^2)h_1(y)g_{1}}{\|h_1(y)g_{1}\|^2} \\ 
y^{(2)} := y - \frac{(\frac{1}{2}h^2_2(y) - \frac{1}{2}\bar h^2)h_2(y)g_{2}}{\|h_2(y)g_{2}\|^2} \\ 
y^{(3)}:=y-(\lambda_1h_1(y)g_{1}+\lambda_2h_2(y)g_{2})
\end{cases}$$
and the coefficients $\lambda_1$ and $\lambda_2$ are defined as
$$ \begin{bmatrix} \lambda_1 \\ \lambda_2 \end{bmatrix}=\begin{bmatrix} \|h_1(y)g_{1}\|^2 &{ (h_1(y)g_{1})}^T(h_2(y)g_{2}) \\ (h_1(y){ g_{1}})^T (h_2(y)g_{2}) & \|h_2(y)g_{2}\|^2 \end{bmatrix}^{-1} \begin{bmatrix} \frac{1}{2}h^2_1(y)-\frac{1}{2}\bar{h}^2 \\ \frac{1}{2}h^2_2(y)-\frac{1}{2}\bar{h}^2 \end{bmatrix}. $$
Iterating this method for any $m$ and $\bar h \geq p_*$ will converge towards this target objective level as follows.
\begin{theorem}\label{thm:level-proj-convergence}
    For any convex, nonnegative, $M$-Lipschitz functions $h_i$ and $\bar h \geq p_*$, the level-set projection method $y_{k+1}=\mathtt{level\mbox{-}proj}(y_k,\bar h)$  has
    $$ \min_{k=0\dots T}\left\{h(y_k) - \bar h\right\} \leq \frac{MD}{\sqrt{T+1}} + \frac{2M^2D^2}{\bar h(T+1)} $$
    provided some $\bar y$ with $f(\bar y)\leq \bar h$ has $\|y_0-\bar y\|\leq D$.
    If additionally each $\frac{1}{2}h_i^2$ is $\mu$-strongly convex and $T+1 \geq \frac{32M^2}{\mu}\log(\mu D^2/\bar h^2)$, then
    $$ \min_{k\leq T}\{h(y_k) - \bar h\} \leq \frac{\sqrt{32}M^2\bar h}{\mu (T+1)} + \frac{64M^4\bar h}{\mu^2(T+1)^2} \ . $$
\end{theorem}
\begin{proof}
Let $\mathcal{\bar Y} = \{y \mid \frac{1}{2}h^2(y) \leq \frac{1}{2}\bar h^2\}\neq\emptyset$ denote the target level set, and $\mathcal{Y}_k = \{y \mid m_k(y)\leq \frac{1}{2}\bar h^2\}\supseteq \mathcal{\bar Y}$ denote our model's target level set (the containment follows from $m_k$ lower bounding $\frac{1}{2}h^2$). Each orthogonal projection step ensures $y_{k}-y_{k+1}$ is normal to $\mathcal{Y}_k$ at $y_{k+1}$. Consequently every $\bar y\in \mathcal{\bar Y}\subseteq\mathcal{Y}_k$ must have $(y_k - y_{k+1})^T(y_{k+1}-\bar y)\geq 0$. Then
\begin{align}
    \|y_{k+1}-\bar y\|^2 &= \|y_k - \bar y\|^2  -  2(y_{k} - y_{k+1})^T(y_{k+1}-\bar y) - \|y_{k+1}-y_k\|^2 \nonumber \\
    &\leq \|y_k - \bar y\|^2 - \|y_{k+1}-y_k\|^2 \nonumber \\
    &\leq \|y_k - \bar y\|^2 - \frac{\left(\frac{1}{2}h^2(y_k) - \frac{1}{2}\bar h^2\right)^2}{M^2_k} \nonumber  \\
    &= \|y_k - \bar y\|^2 - \frac{\left(\frac{1}{2}h^2(y_k) - \frac{1}{2}\bar h^2\right)^2}{M^2\bar h^2 + 2M^2\left(\frac{1}{2}h^2(y_k) - \frac{1}{2}\bar h^2\right)} \ . \label{eq:level-proj-induction}
\end{align}
where the last inequality follows as $m_k$ is $M_k$-Lipschitz (by~\eqref{eq:general-subgrad-bound}) with $m_k(y_k)-m_k(y_{k+1}) = \frac{1}{2}h^2(y_k) - \frac{1}{2}\bar h^2$, and so $\|y_{k+1}-y_k\| \geq (\frac{1}{2}h^2(y_k) - \frac{1}{2}\bar h^2)/M_k$. Inductively applying~\eqref{eq:level-proj-induction} yields
\begin{align*}
    & \min_{k=0\dots T}\left\{\frac{\left(\frac{1}{2}h^2(y_k) - \frac{1}{2}\bar h^2\right)^2}{M^2\bar h^2 + 2M^2\left(\frac{1}{2}h^2(y_k) - \frac{1}{2}\bar h^2\right)}\right\} \leq \frac{\|y_0-\bar y\|^2}{T+1} \\
    \implies & \min_{k=0\dots T}\left\{\frac{1}{2}h^2(y_k) - \frac{1}{2}\bar h^2\right\} \leq \frac{M\bar h\|y_0-\bar y\|}{\sqrt{T+1}} + \frac{2M^2\|y_0-\bar y\|^2}{T+1} \ . 
\end{align*}

Supposing $\mu$-strong convexity holds, observe that for $\bar y\in \mathcal{\bar Y}$ closest to $y_k$, $\frac{1}{2}h^2(y_k) - \frac{1}{2}\bar h^2 \geq \frac{\mu}{2}\|y_k-\bar y\|^2$. Then letting $D_k = \mathrm{dist}(y_k, \mathcal{\bar Y})^2$,~\eqref{eq:level-proj-induction} can then be relaxed to the recurrence relation
$$ D_{k+1} \leq D_k - \frac{\mu^2 D_k^2}{4(M^2\bar h^2 + \mu M^2D_k)} \ .$$
Note that this recurrence implies $D_k$ is monotonically decreasing. Let $K$ denote the first index with $D_k \leq \bar h^2/\mu$. All $k<K$ must have a geometric decrease of $D_{k+1}\leq (1-\mu/8M^2 )D_k$. Thus $K\leq (8M^2/\mu) \log(\mu D_0/\bar h^2)$. For all $k\geq K$, the recurrence ensures $D_{k+1}\leq D_k - \mu^2 D_k^2/8M^2\bar h^2$, which implies $D_k \leq 8M^2\bar h^2/\mu^2(k-K)$. \footnote{The recurrence $D_{k+1} \leq D_k - \alpha D_k^2$, is equivalent to $\frac{1}{D_k} \leq \frac{1}{D_{k+1}}-\alpha \frac{D_k}{D_{k+1}}$, then by $\frac{D_k}{D_{k+1}}\geq 1$ and $1/D_K>0$, we obtain $D_k$.}
By assumption, we have $T/2 > 2K$ and so $T/2-K+1 \geq T/4 +1 $. Hence recurrence ensures
$$ D_{T/2} \leq \frac{32 M^2\bar h^2}{\mu^2(T+1)} \ . $$
Then inductively applying~\eqref{eq:level-proj-induction} from $k=T/2$ to $T$ yields our claimed rate as
\begin{align*}
    & \min_{k=T/2\dots T}\left\{\frac{\left(\frac{1}{2}h^2(y_k) - \frac{1}{2}\bar h^2\right)^2}{M^2\bar h^2 + 2M^2\left(\frac{1}{2}h^2(y_k) - \frac{1}{2}\bar h^2\right)}\right\} \leq \frac{D_{T/2}}{T/2} \\
    \implies & \min_{k=0\dots T}\left\{\frac{1}{2}h^2(y_k) - \frac{1}{2}\bar h^2\right\} \leq \frac{M\bar h\sqrt{D_{T/2}}}{\sqrt{T+1}} + \frac{2M^2D_{T/2}}{T+1} \\
    \implies & \min_{k=0\dots T}\left\{\frac{1}{2}h^2(y_k) - \frac{1}{2}\bar h^2\right\} \leq \frac{\sqrt{32}M^2\bar h^2}{\mu (T+1)} + \frac{64M^4\bar h^2}{\mu^2(T+1)^2} \ . 
\end{align*}
\end{proof}
We expect that faster convergence rates could be proven for this method when each $\frac{1}{2}h_i^2$ is smooth (with or without strong convexity). Numerically, Section~\ref{sec:appli} saw the level projection method perform very well across a range of settings. The ideas of Lan~\cite{Lan2015} may provide a tractable path to establish such analysis and enable the analysis of accelerated level projection methods when objective functions square to become smooth.

\subsection{Numerics with \texttt{Convex.jl} instead of \texttt{JuMP.jl}} \label{append:jump}
Finally, we repeat our experiments from Section~\ref{subsec:numerics2} using \texttt{Convex.jl} as an alternative to \texttt{JuMP.jl} to verify that our numerical observations are not an artifact of JuMP's problem reformulations. We find similar performance with two exceptions: (i) Gurobi is able to run on the Convex reformulation of $p=4$-norm ellipsoids, so it is now included, (ii) Most methods are slightly slower in \texttt{Convex.jl} with COSMO being substantially slower. 
\begin{figure}[t]
\begin{minipage}[t]{0.3\linewidth}
        \captionsetup{justification=centering}
        \centering
	\includegraphics[width=\linewidth]{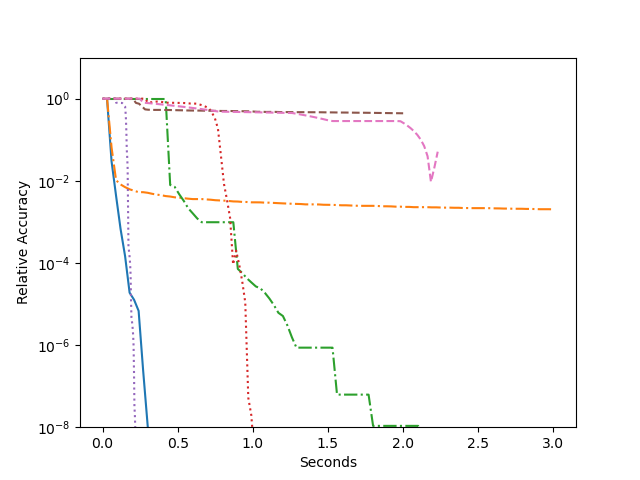}
	\caption*{(a) $(n,m)=(400,200)$}
\end{minipage}
\begin{minipage}[t]{0.3\linewidth}
        \captionsetup{justification=centering}
	\centering
	\includegraphics[width=\linewidth]{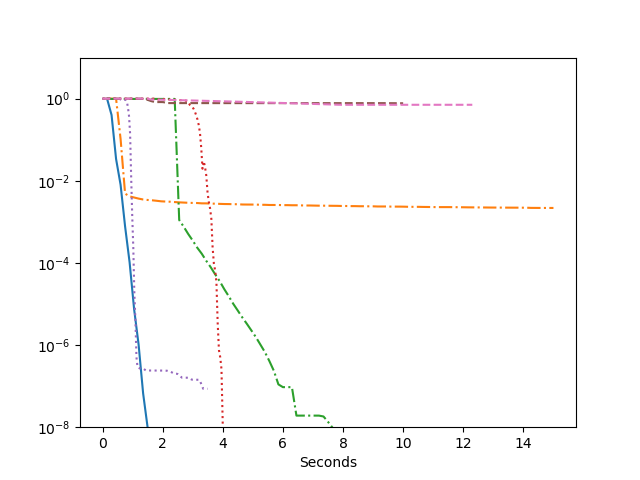}
        \caption*{(b) $(n,m)=(800,400)$}
\end{minipage}
\begin{minipage}[t]{0.36\linewidth}
        \captionsetup{justification=centering}
	\centering
	\includegraphics[width=\linewidth]{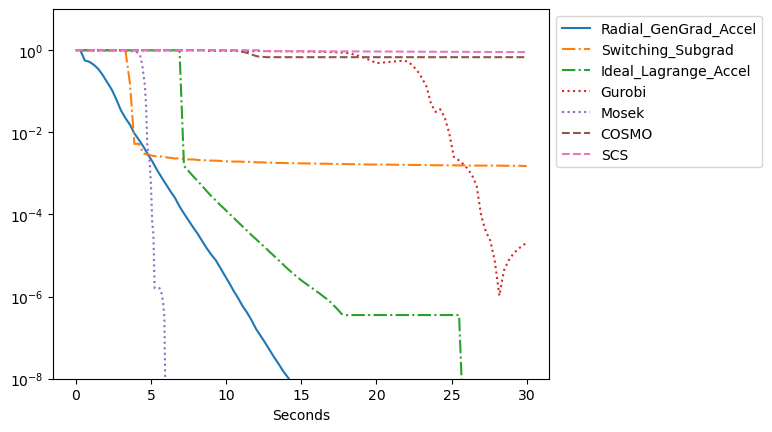}
        \caption*{(c) $(n,m)=(1600,800)$}
\end{minipage}
\caption{The minimum relative accuracy $|f(x_k)-f(x^*)|/|f(x_0)-f(x^*)|$ of~\eqref{eq:quad-SOCP}, mirroring Figure~\ref{fig:ellips_cons} but with \texttt{Convex.jl}.}
\label{fig:jellips_cons}
\end{figure}

\begin{figure}[t]
\begin{minipage}[t]{0.3\linewidth}
        \captionsetup{justification=centering}
        \centering
	\includegraphics[width=\linewidth]{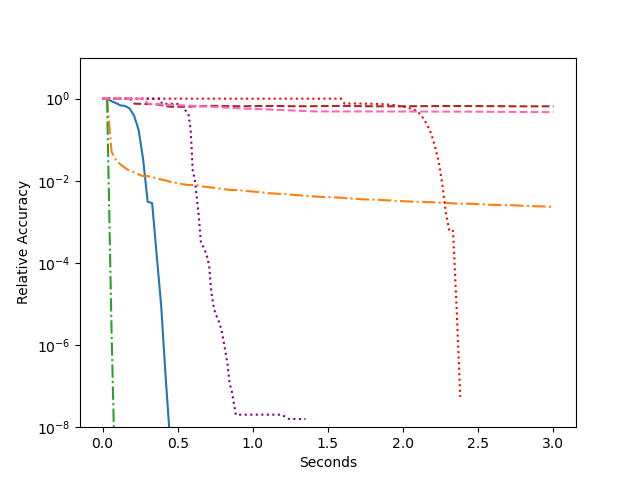}
	\caption*{(a) $(n,m)=(400,200)$}
\end{minipage}
\begin{minipage}[t]{0.3\linewidth}
        \captionsetup{justification=centering}
	\centering
	\includegraphics[width=\linewidth]{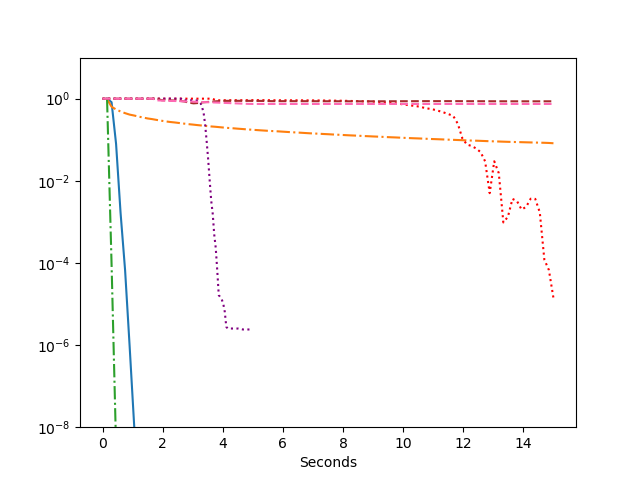}
        \caption*{(b) $(n,m)=(800,400)$}
\end{minipage}
\begin{minipage}[t]{0.36\linewidth}
        \captionsetup{justification=centering}
	\centering
	\includegraphics[width=\linewidth]{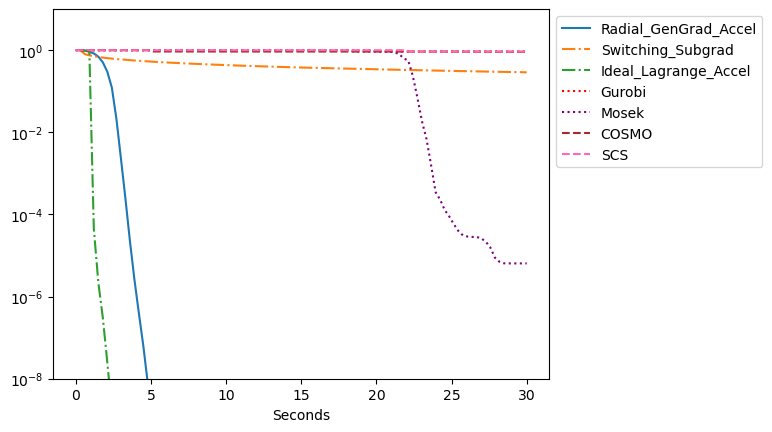}
        \caption*{(c) $(n,m)=(1600,800)$}
\end{minipage}
\caption{The minimum relative accuracy $|f(x_k)-f(x^*)|/|f(x_0)-f(x^*)|$ of~\eqref{eq:quart}, mirroring Figure~\ref{fig:quartic_cons} but with \texttt{Convex.jl}.}
\label{fig:jquartic_cons}
\end{figure}


}
\end{appendices}
\end{document}